\newtheorem{theorem}{Theorem}[section]
\newtheorem{lemma}[theorem]{Lemma}
\newtheorem{proposition}[theorem]{Proposition}
\newtheorem{Assumption}{Assumption}
\newtheorem{algorithm}{Algorithm}
\theoremstyle{definition}
\newtheorem{example}{Example}[section]
\theoremstyle{remark}
\numberwithin{equation}{section}
\begin{document}

\title[Stochastic mirror descent method]
{Stochastic mirror descent method for linear ill-posed problems in Banach spaces}

\author{Qinian Jin}

\address{Mathematical Sciences Institute, Australian National
University, Canberra, ACT 2601, Australia}
\email{qinian.jin@anu.edu.au} \curraddr{}

\author{Xiliang Lu}
\address{School of Mathematics and Statistics, Wuhan University, Wuhan 430072, China
\& Hubei Computational Science Key Laboratory, Wuhan University, Wuhan 430072, China}
\email{xllv.math@whu.edu.cn}

\author{Liuying Zhang}
\address{School of Mathematics and Statistics, Wuhan University, Wuhan 430072, China}
\email{lyzhang.math@whu.edu.cn}




\begin{abstract}
Consider linear ill-posed problems governed by the system $A_i x = y_i$ for $i =1, \cdots, p$, where each $A_i$ is a bounded linear operator from a Banach space $X$ to a Hilbert space $Y_i$. In case $p$ is huge, solving the problem by an iterative regularization method using the whole information at each iteration step can be very expensive, due to the huge amount of memory and excessive computational load per iteration. To solve such large-scale  ill-posed systems efficiently, we develop in this paper a stochastic mirror descent method which uses only a small portion of equations randomly selected at each iteration steps and incorporates convex regularization terms into the algorithm design. Therefore, our method scales very well with the problem size and has the capability of capturing features of sought solutions. The convergence property of the method depends crucially on the choice of step-sizes. We consider various rules for choosing step-sizes and obtain convergence results under {\it a priori} early stopping rules. In particular, by incorporating the spirit of the discrepancy principle we propose a choice rule of step-sizes which can efficiently suppress the oscillations in iterates and reduce the effect of semi-convergence. Furthermore, we establish an order optimal convergence rate result when the sought solution satisfies a benchmark source condition. Various numerical simulations are reported to test the performance of the method. 
\end{abstract}

\def\p{\partial}
\def\d{\delta}
\def\l{\langle}
\def\r{\rangle}
\def\C{\mathcal C}
\def\D{\mathscr D}
\def\a{\alpha}
\def\b{\beta}
\def\d{\delta}

\def\la{\lambda}
\def\ep{\varepsilon}
\def\Ga{\Gamma}
\def\R{{\mathcal R}}
\def\J{{\mathcal J}}
\def\E{\mathcal E}
\def\U{\mathcal U}
\def\Q{\mathcal Q}
\def\eps{\varepsilon}

\def\C{\mathcal C}
\def\G{\mathcal G}
\def\S{\mathcal S}
\def\M{\mathcal M}

\def\N{\mathcal N}
\def\X{\mathcal X}
\def\Y{\mathcal Y}
\def\B{\mathcal B}
\def\A{\mathcal A}
\def\EE{\mathbb E}
\def\H{\mathcal H}

\def\bQ{{\bf Q}}
\def\bD{{\bf D}}
\def\bB{{\bf B}}
\def\D{\mathscr D}
\def\yd{y^{\delta}}
\def\xtd{\tilde{x}}
\def\bA{{\bf A}}
\def\bx{{\bf x}}
\def\by{{\bf y}}
\def\bz{{\bf z}}

\newcommand{\of}[1]{\left( #1 \right)}
\newcommand{\off}[1]{(#1)}
\newcommand{\dom}[1]{\mathcal{D}(#1)}
\newcommand{\Sp}[1]{\mathcal{#1}}
\newcommand{\norm}[1]{\Vert #1 \Vert}
\newcommand{\xd}[1]{x_{#1}^{\delta}}
\newcommand{\brc}[1]{\left\lbrace #1 \right\rbrace}
\newcommand{\mf}[2]{\mathcal{S}\of{#1,#2}}
\newcommand{\arrow}[1]{\overset{\tau_{\Sp{#1}}}{\longrightarrow}}

\maketitle

\section{\bf Introduction}
\setcounter{equation}{0}

Due to rapid growth of data sizes in practical applications, in recent years stochastic optimization methods have received tremendous attention and have been proved to be efficient in various applications of science and technology including in particular the machine learning researches (\cite{BCN2018,GBC2017}). In this paper we will develop a stochastic mirror descent method for solving linear ill-posed inverse problems in Banach spaces. 

We will consider linear ill-posed inverse problems governed by the system 
\begin{align}\label{smd.1}
A_i x = y_i, \qquad  i =1, \cdots, p
\end{align}
consisting of $p$ linear equations, where, for each $i=1, \cdots, p$, $A_i: X \to Y_i$ is a bounded linear operator from a Banach space $X$ to a Hilbert space $Y_i$. Such systems arise naturally in many practical applications. For instance, many linear ill-posed inverse problems can be described by an integral equation of the first kind (\cite{EHN1996,G1984})
$$
(A x)(s):=\int_\D k(s, t) x(t) dt = y(s), \quad s\in \D', 
$$
where $\D$ and $\D'$ are bounded domains in Euclidean spaces and the kernel $k$ is a bounded continuous function on $\D'\times \D$. Clearly $A$ is a bounded linear operator from $L^r(\D)$ to $C(\D')$ for any $1\le r \le \infty$. 
By taking $p$ sample points $s_1, \cdots, s_p$ in $\D'$, then the problem of determining a solution using only the knowledge of $y_i:=y(s_i)$ for $i=1, \cdots, p$ reduces to solving a linear system of the form (\ref{smd.1}), where $A_i: L^r(\D) \to {\mathbb R}$ is given by 
$$
A_i x:=\int_\D k(s_i, t) x(t) dt
$$
for each $i$. Further examples of (\ref{smd.1}) can be found in various tomographic techniques using multiple measurements \cite{N2001}. 

Throughout the paper we always assume (\ref{smd.1}) has a solution. The system (\ref{smd.1}) may have many solutions. By taking into account of {\it a priori} information about the sought solution, we may use a proper, lower semi-continuous, convex function $\R: X \to (-\infty, \infty]$ to select a solution $x^\dag$ of (\ref{smd.1}) such that
\begin{align}\label{rbdgm.1}
\R(x^\dag) = \min\left\{\R(x): A_i x = y_i \mbox{ for } i =1, \cdots, p\right\}
\end{align}
which, if exists, is called a $\R$-minimizing solution of (\ref{smd.1}). In practical applications, the exact data $y:=(y_1, \cdots, y_p)$ is in general not available, instead we only have noisy data $y^\d:=(y_1^\d, \cdots, y_p^\d)$ satisfying
\begin{align}\label{rbdgm.2}
\|y_i^\d - y_i\| \le \d_i, \quad i=1, \cdots, p,
\end{align}
where $\d_i>0$ denotes the noise level corresponding to data in the space $Y_i$. How to use the noisy data $y^\d$ to construct an approximate solution of the $\R$-minimizing solution of (\ref{smd.1}) is an important topic. 

Let $Y:= Y_1 \times \cdots \times Y_p$ and define $A: X \to Y$ by 
$$
A x = (A_1 x, \cdots, A_p x), \quad  x\in X.
$$
Then (\ref{rbdgm.1}) can be equivalently stated as 
\begin{align}\label{smd.2}
\R(x^\dag) = \min\left\{\R(x): A x = y\right\}.
\end{align}
which has been considered by various variational and iterative regularization methods, see \cite{BH2012,BO2004,Jin2021,JW2013,SKHK2012} for instance. In particular, the Landweber iteration in Hilbert spaces has been extended for solving (\ref{smd.2}), leading to the iterative method of the form
\begin{align}\label{MD}
\begin{split} 
& x_n^\d  = \arg\min_{x\in X} \left\{ \R(x) - \l \xi_n^\d, x\r \right\}, \\
& \xi_{n+1}^\d = \xi_n^\d - t_n^\d A^* (A x_n^\d - y^\d),
\end{split}
\end{align}
where $A^*: Y \to X^*$ denotes the adjoint of $A$ and $t_n^\d$ is the step-size. 
This method can be derived as a special case of the mirror descent method; see Section \ref{sect2} for a brief account. The method (\ref{MD}) has been investigated in a number of  references, see \cite{BH2012,FS2010,Jin2021,JL2014,JW2013}. In particular, the convergence and convergence rates have been derived in \cite{Jin2021} when the method is terminated by either an {\it a priori} stopping rule or the discrepancy principle 
$$
\|A x_{n_\d}^\d - y^\d\| \le \tau \d <\|A x_n^\d - y^\d\|, \quad 0\le n <n_\d, 
$$
where 
$$
\d:=\sqrt{\d_1^2 + \cdots + \d_p^2}
$$
denotes the total noise level of the noisy data. We remark that the minimization problem in (\ref{MD}) for defining $x_n^\d$ 
can be solved easily in general as it does not depend on $A$; in fact $x_n^\d$ can be given by an explicit formula in many interesting cases; even if $x_n^\d$ does not have an explicit formula, there
exist fast algorithms for determining $x_n^\d$ efficiently. However, note that  
$$
A^* (A x_n^\d - y^\d) = \sum_{i=1}^p A_i^* (A_i x_n^\d - y_i^\d). 
$$
Therefore, updating $\xi_n^\d$ to $\xi_{n+1}^\d$ requires calculating $A_i^*(A_i x_n^\d - y_i^\d)$ for all $i =1, \cdots, p$. 
In case $p$ is huge, using the method (\ref{MD}) to solve (\ref{smd.2}) can be inefficient because it requires a huge amount of memory and excessive computational work per iteration. 

In order to relieve the drawback of the method (\ref{MD}),  by extending the Kaczmarz-type method \cite{HLS2007} in Hilbert spaces, a Landweber-Kaczmarz method in Banach spaces has been proposed in \cite{Jin2016,JW2013} to solve (\ref{rbdgm.1}) which cyclically considers each equation in (\ref{rbdgm.1}) in a Gauss-Seidel manner and the iteration scheme takes the form 
\begin{align}\label{MD-K}
\begin{split} 
& x_n^\d  = \arg\min_{x\in X} \left\{ \R(x) - \l \xi_n^\d, x\r \right\}, \\
& \xi_{n+1}^\d = \xi_n^\d - t_n^\d A_{i_n}^* (A_{i_n} x_n^\d - y_{i_n}^\d),
\end{split}
\end{align}
where $i_n = (n\mod p) + 1$. The convergence of this method has been shown in \cite{JW2013} in which the numerical results demonstrate its nice performance. However, it should be pointed out that the efficiency of the method (\ref{MD-K}) depends crucially on the order of the equations and its convergence speed is difficult to be quantified. In order to resolve these issues, in this paper we will consider a stochastic version of (\ref{MD-K}), namely, instead of taking $i_n$ cyclically, we will choose $i_n$ from $\{1, \cdots, p\}$ randomly at each iteration step. The corresponding method will be called the stochastic mirror descent method and  more details will be presented in Section \ref{sect2} where we also propose the mini-batch version of the stochastic mirror descent method.

The stochastic mirror descent method, that we will consider in this paper, includes the stochastic gradient descent as a special case. Indeed, when $X$ is a Hilbert space and $\R(x) = \|x\|^2/2$, the method (\ref{MD-K}) becomes \begin{align}\label{SGD}
x_{n+1}^\d = x_n^\d - t_n^\d A_{i_n}^* (A_{i_n} x_n^\d - y_{i_n}^\d)
\end{align}
which is exactly the stochastic gradient descent method studied in \cite{JL2019,JZZ2020,LM2021,RSL2022} for solving linear ill-posed problems in Hilbert spaces.
In many applications, however, the sought solution may sit in a Banach space instead of a Hilbert space, and the sought solution may have {\it a priori} known special features, such as nonnegativity, sparsity and piecewise constancy. Unfortunately, the stochastic gradient descent method does not have the capability to incorporate these information into the algorithm. 
However, this can be handled by the stochastic mirror descent method with careful choices of a suitable Banach space $X$ and a strongly convex penalty functional $\R$. 

In this paper we will use tools from convex analysis in Banach spaces to analyze the stochastic mirror descent method. The choice of the step-size plays a crucial role on the convergence of the method. We consider several rules for choosing the step-sizes and provide criteria for terminating the iterations in order to guarantee a convergence when the noise level tends to zero. The iterates produced by the stochastic mirror descent method exhibit salient oscillations and, due to the ill-posedness of the underlying problems, the method using noisy data demonstrates the semi-convergence property, i.e. the iterate tends to the sought solution at the beginning and, after a critical number of iterations, the iterates diverges. The oscillations and  semi-convergence make it difficult to determine an output with good approximation property, in particular when the noise level is relatively large. When the information on noise level is available, by incorporating the spirit of the discrepancy principle we propose a rule for choosing step-size. This rule enables us to efficiently suppress the oscillations of iterates and remove the semi-convergence of the method as indicated by the extensive numerical simulations. Furthermore, we obtain an order optimal convergence rate result for the stochastic mirror descent method with constant step-size when the sought solution satisfies a benchmark source condition. We achieve this by interpreting the stochastic mirror descent method equivalently as a randomized block gradient method applied to the dual problem of (\ref{smd.1}).
Even for the stochastic gradient descent method, our convergence rate result supplements the existing results since only sub-optimal convergence rates have been derived under diminishing step-sizes, see \cite{JL2019,LM2021}.

This paper is organized as follows. In Section \ref{sect2} we first collect some basic facts on convex analysis in Banach spaces and then give an account on the stochastic mirror descent method. 
In Section \ref{sect3} we prove some convergence results on the stochastic mirror descent method under various choices of the step-sizes. When the sought solution satisfies a benchmark source condition, in Section \ref{sect4} we establish an order optimal convergence rate result. Finally, in Section \ref{sect5} we present extensive numerical simulations to test the performance of the stochastic mirror descent method.

\section{\bf The method}\label{sect2}
\setcounter{equation}{0}

\subsection{Preliminaries}

In this section, we will collect some basic facts on convex analysis in Banach
spaces which will be used in the analysis of the stochastic mirror descent method; for
more details one may refer to \cite{R1970,Z2002} for instance.

Let $X$ be a Banach space whose norm is denoted by $\|\cdot\|$, we use $X^*$ to denote its dual space. Given $x\in X$ and $\xi\in X^*$ we write $\l \xi, x\r = \xi(x)$ 
for the duality pairing; in case $X$ is a Hilbert space, we also use $\l \cdot, \cdot\r$ to denote the inner product. For a convex function $f : X \to  (-\infty, \infty]$,  its effective domain is denoted by 
$$
\mbox{dom}(f) := \{x \in X : f(x) < \infty\}.
$$
If $\mbox{dom}(f) \ne \emptyset$, $f$ is called proper. 
Given $x\in \mbox{dom}(f)$, an element $\xi\in X^*$ is called a subgradient of $f$ at $x$ if
$$
f(\bar x) \ge  f(x) + \l\xi, \bar x - x\r,  \quad \forall \bar x \in X.
$$
The collection of all subgradients of $f$ at $x$ is denoted as $\p f(x)$ and is called the
subdifferential of $f$ at $x$. If $\p f(x) \ne \emptyset$, then $f$ is called subdifferentiable at $x$. Thus
$x \to \p f(x)$ defines a set-valued mapping $\p f$ whose domain of definition is defined as
$$
\mbox{dom}(\p f) := \{x \in \mbox{dom}(f) : \p f(x) \ne \emptyset\}.
$$
Given $x\in \mbox{dom}(\p f)$ and $\xi \in \p f(x)$, the Bregman distance induced by $f$ at $x$ in
the direction $\xi$ is defined by
$$
D_f^\xi(\bar x, x) := f(\bar x) -  f(x) - \l \xi, \bar x - x\r,  \quad \forall \bar x \in X
$$
which is always nonnegative.

A proper function $f : X \to (-\infty, \infty]$ is called strongly convex if there exists a
constant $\sigma>0$ such that
\begin{align}\label{dgm.23}
f(t\bar x + (1-t) x) + \sigma t(1-t) \|\bar x -x\|^2 \le  tf(\bar x) + (1-t)f(x)
\end{align}
for all $\bar x, x\in \mbox{dom}(f)$ and $t\in [0, 1]$. The largest number $\sigma>0$ such that (\ref{dgm.23})
holds true is called the modulus of convexity of $f$. It is easy to see that for a proper,
strongly convex function $f : X \to (-\infty, \infty]$ with modulus of convexity $\sigma>0$ there holds 
\begin{align}\label{dgm.24}
D_f^\xi(\bar x, x) \ge \sigma \|x-\bar x\|^2
\end{align}
for all $\bar x\in \mbox{dom}(f)$, $x\in \mbox{dom}(\p f)$ and $\xi\in \p f(x)$.

For a proper function $f : X\to (-\infty, \infty]$, its Legendre-Fenchel conjugate is
defined by
$$
f^*(\xi) :=  \sup_{x\in X}  \{\l \xi, x\r - f(x)\}, \quad  \xi \in X^* 
$$
which is a convex function taking values in $(-\infty, \infty]$. By definition we have the Fenchel-Young inequality
\begin{align}\label{dgm.21}
f^*(\xi) + f(x) \ge \l \xi, x\r 
\end{align}
for all $x\in X$ and $\xi\in X^*$. If $f : X\to  (-\infty, \infty]$ is proper, lower semi-continuous
and convex, $f^*$ is also proper and 
\begin{align}\label{dgm.22}
\xi \in \p f(x) \Longleftrightarrow x\in \p f^*(\xi) \Longleftrightarrow  f(x) + f^*(\xi) = \l \xi, x\r.
\end{align}
The following important result gives further properties of $f^*$ which in particular shows that the strong convexity of $f$ implies the continuous differentiability of $f^*$ with gradient $\nabla f^*$ mapping $X^*$ to $X$; see \cite[Corollary 3.5.11]{Z2002} 

\begin{proposition}\label{dgm.prop22}
Let $X$ be a Banach space and let $f : X \to (-\infty, \infty]$ be a proper,
lower semi-continuous, strongly convex function with modulus of convexity $\sigma>0$.
Then $\mbox{dom}(f^*) = X^*$, $f^*$ is Fr\'{e}chet differentiable and its gradient $\nabla f^*$ maps $X^*$ into $X$ satisfying 
$$
\|\nabla f^*(\xi) -\nabla f^*(\eta) \| \le \frac{\|\xi-\eta\|}{2\sigma} 
$$
for all $\xi, \eta \in X^*$. 
\end{proposition}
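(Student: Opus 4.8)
The plan is to realize $\nabla f^*$ explicitly: for each $\xi\in X^*$ I will show the functional $x\mapsto f(x)-\langle\xi,x\rangle$ has a \emph{unique} minimizer $\hat x_\xi\in X$, set $\nabla f^*(\xi):=\hat x_\xi$, and then read off all three assertions (that $f^*$ is finite on all of $X^*$, that it is Fr\'echet differentiable, and that $\nabla f^*$ is Lipschitz with constant $1/(2\sigma)$) from strong convexity together with the equivalences (\ref{dgm.22}).

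First I would produce a quadratic minorant of $f$. Since $f$ is proper, lower semi-continuous and convex, $f^*$ is proper, so fix $\xi_0\in\mbox{dom}(f^*)$; then $g_0(x):=f(x)-\langle\xi_0,x\rangle$ is bounded below. The key point is that strong convexity turns every minimizing sequence $(x_n)$ of $g_0$ into a Cauchy sequence: applying (\ref{dgm.23}) with $t=1/2$ to $x_n,x_m$ gives $\frac{\sigma}{4}\|x_n-x_m\|^2\le\frac12 g_0(x_n)+\frac12 g_0(x_m)-\inf g_0\to 0$. By completeness of $X$ and lower semi-continuity the limit is a minimizer $x_0$ of $g_0$, so $\xi_0\in\p f(x_0)$, and (\ref{dgm.24}) upgrades the subgradient inequality to $f(x)\ge f(x_0)+\langle\xi_0,x-x_0\rangle+\sigma\|x-x_0\|^2$ for all $x\in X$. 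Feeding this minorant into the definition of $f^*$ and using the duality bound $\langle\xi-\xi_0,x-x_0\rangle\le\|\xi-\xi_0\|\,\|x-x_0\|$ to optimize over $\|x-x_0\|$ yields $f^*(\xi)\le\langle\xi,x_0\rangle-f(x_0)+\|\xi-\xi_0\|^2/(4\sigma)<\infty$ for every $\xi\in X^*$; hence $\mbox{dom}(f^*)=X^*$.

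With this in hand, for an arbitrary $\xi\in X^*$ the functional $g_\xi(x):=f(x)-\langle\xi,x\rangle$ is proper, strongly convex and bounded below, so the same Cauchy-sequence argument produces its minimizer, which is unique since $\sigma>0$ forces $g_\xi$ to be strictly convex; call it $\hat x_\xi$ and define $\nabla f^*(\xi):=\hat x_\xi\in X$. By (\ref{dgm.22}) this is equivalent to $\xi\in\p f(\hat x_\xi)$, equivalently $\hat x_\xi\in\p f^*(\xi)$ (under the canonical embedding $X\hookrightarrow X^{**}$). For the Lipschitz estimate, apply (\ref{dgm.24}) to the pairs $(\hat x_\eta,\hat x_\xi)$ and $(\hat x_\xi,\hat x_\eta)$ and add the two inequalities; the $f$-terms cancel and one is left with $\langle\xi-\eta,\hat x_\xi-\hat x_\eta\rangle\ge 2\sigma\|\hat x_\xi-\hat x_\eta\|^2$, whence $\|\hat x_\xi-\hat x_\eta\|\le\|\xi-\eta\|/(2\sigma)$ by the duality pairing bound. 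Finally, Fr\'echet differentiability follows by a sandwich: $\hat x_\xi\in\p f^*(\xi)$ gives $f^*(\xi+h)-f^*(\xi)-\langle h,\hat x_\xi\rangle\ge 0$, while evaluating the suprema defining $f^*(\xi+h)$ (attained at $\hat x_{\xi+h}$) and $f^*(\xi)$ at the common point $\hat x_{\xi+h}$ gives $f^*(\xi+h)-f^*(\xi)\le\langle h,\hat x_{\xi+h}\rangle$; combining these with the Lipschitz bound yields $0\le f^*(\xi+h)-f^*(\xi)-\langle h,\hat x_\xi\rangle\le\langle h,\hat x_{\xi+h}-\hat x_\xi\rangle\le\|h\|^2/(2\sigma)=o(\|h\|)$, so $f^*$ is Fr\'echet differentiable at $\xi$ with gradient $\hat x_\xi$, which lies in $X$.

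The only genuinely delicate step is the existence of the minimizer $\hat x_\xi$: $X$ is a general Banach space, possibly non-reflexive, so bounded sets need not be weakly compact and the direct method is unavailable. This is resolved precisely by the observation highlighted above --- strong convexity promotes ``minimizing sequence'' to ``Cauchy sequence'', so completeness of $X$ substitutes for any compactness argument. Everything else reduces to routine manipulation of the Fenchel--Young inequality (\ref{dgm.21}), the equivalence (\ref{dgm.22}), and the Bregman bound (\ref{dgm.24}).
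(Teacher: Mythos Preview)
Your argument is correct and self-contained. The paper does not actually prove Proposition~\ref{dgm.prop22}; it merely cites \cite[Corollary 3.5.11]{Z2002}. So there is no ``paper's own proof'' to compare against beyond that reference.

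That said, your route is essentially the standard one underlying the cited result: realize $\nabla f^*(\xi)$ as the unique minimizer of $x\mapsto f(x)-\langle\xi,x\rangle$, use strong convexity to force minimizing sequences to be Cauchy (thereby bypassing reflexivity/weak compactness), extract the $1/(2\sigma)$ Lipschitz bound from the symmetrized Bregman inequality, and sandwich for Fr\'echet differentiability. Each step checks out, including the bootstrap that first produces one point $x_0\in\mbox{dom}(\partial f)$ to get the quadratic minorant and hence $\mbox{dom}(f^*)=X^*$. The one place worth being explicit is that the equivalence~(\ref{dgm.22}) identifies $\hat x_\xi\in X$ with a subgradient of $f^*$ at $\xi$ via the canonical embedding $X\hookrightarrow X^{**}$; you note this, and it is exactly what lets you conclude $\nabla f^*$ takes values in $X$ rather than merely in $X^{**}$.
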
 

Given a proper strongly convex function $f: X\to (-\infty, \infty]$, we may consider for each $\xi\in X^*$ the convex minimization problem
\begin{align}\label{convmin}
\min_{x\in X}\left\{f(x)-\l \xi, x\r\right\}
\end{align}
which is involved in the  formulation of the stochastic mirror descent method below. According to \cite[Theorem 3.5.8]{Z2002}, (\ref{dgm.22}) and Proposition \ref{dgm.prop22} we have

\begin{proposition}\label{dgm:prop23}
If $f:X\to (-\infty, \infty]$ is a proper, lower semi-continuous, strongly convex function, then for any $\xi\in X^*$ the minimization problem (\ref{convmin}) has a unique minimizer given by $\nabla f^*(\xi)$.
\end{proposition}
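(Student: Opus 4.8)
The plan is to combine the Fenchel–Young inequality (\ref{dgm.21}), the characterization of subgradients (\ref{dgm.22}), and the differentiability statement in Proposition \ref{dgm.prop22}. First I would observe that, by the definition of the Legendre–Fenchel conjugate, for every $x\in X$ and the fixed $\xi\in X^*$ one has
\begin{align*}
f(x) - \l \xi, x\r \ge -f^*(\xi),
\end{align*}
so the infimum of the objective in (\ref{convmin}) is bounded below by $-f^*(\xi)$; in particular the minimization problem makes sense. The task is then to show this lower bound is attained, and attained exactly at $\nabla f^*(\xi)$, which is well-defined as an element of $X$ by Proposition \ref{dgm.prop22} since $\mbox{dom}(f^*)=X^*$.

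Next I would invoke the equivalences in (\ref{dgm.22}). Setting $\hat x := \nabla f^*(\xi)$, Proposition \ref{dgm.prop22} gives $\xi \in \p f(\hat x)$ (this is exactly the content of $\hat x \in \p f^*(\xi)$ combined with the middle equivalence in (\ref{dgm.22}), using that $f$ is proper, l.s.c.\ and convex so $f^{**}=f$). From the third equivalence in (\ref{dgm.22}) we then get $f(\hat x) + f^*(\xi) = \l \xi, \hat x\r$, i.e.\ $f(\hat x) - \l \xi, \hat x\r = -f^*(\xi)$. Hence $\hat x$ attains the lower bound established in the first step, so $\hat x = \nabla f^*(\xi)$ is a minimizer of (\ref{convmin}).

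For uniqueness I would argue directly from strong convexity. Suppose $x_1$ and $x_2$ are both minimizers with common minimal value $m$. Applying (\ref{dgm.23}) to the function $x\mapsto f(x) - \l\xi,x\r$ (which is strongly convex with the same modulus $\sigma>0$, since subtracting a linear term does not affect (\ref{dgm.23})) at $t=1/2$ yields
\begin{align*}
m + \frac{\sigma}{4}\|x_1 - x_2\|^2 \le \frac{1}{2}\left(f(x_1)-\l\xi,x_1\r\right) + \frac{1}{2}\left(f(x_2)-\l\xi,x_2\r\right) = m,
\end{align*}
forcing $\|x_1-x_2\|=0$. Thus the minimizer is unique and equals $\nabla f^*(\xi)$. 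Alternatively, one can simply cite \cite[Theorem 3.5.8]{Z2002} for existence and uniqueness and then identify the minimizer via (\ref{dgm.22}) as above; the paper already signals this route.

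I do not anticipate a serious obstacle here: the only mild subtlety is making sure the "$x\in\p f^*(\xi)\Rightarrow \xi\in\p f(x)$" direction of (\ref{dgm.22}) is legitimately available, which requires the biconjugate identity $f^{**}=f$ and hence the standing hypotheses that $f$ is proper, l.s.c.\ and convex — all assumed in the statement. Everything else is a direct assembly of the cited facts.
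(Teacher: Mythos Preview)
Your proposal is correct and follows essentially the same route the paper indicates: the paper does not spell out a proof but simply invokes \cite[Theorem 3.5.8]{Z2002}, (\ref{dgm.22}) and Proposition \ref{dgm.prop22}, and your argument is precisely an unpacking of these ingredients (Fenchel--Young for the lower bound, (\ref{dgm.22}) with Proposition \ref{dgm.prop22} to show $\nabla f^*(\xi)$ attains it, strong convexity for uniqueness). You even flag the citation route explicitly, so there is nothing to add.
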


\subsection{Description of the method}

In order to motivate our method, we first briefly review how to extend the gradient method in Hilbert spaces to solve the minimization problem
\begin{align}\label{smd.21}
\min_{x\in X} f(x)  
\end{align}
in Banach spaces, where $f : X \to {\mathbb R}$ is a Fr\'{e}chet differentiable function defined
on a Banach space $X$. When $X$ is a Hilbert space, the gradient method for solving (\ref{smd.21}) takes the form $x_{n+1} = x_n-t_n \nabla f(x_n)$ which can be equivalently
stated as
\begin{align}\label{smd.22}
x_{n+1} & = \arg\min_{x\in X} \left\{\frac{1}{2} \|x-(x_n - t_n \nabla f(x_n))\|^2 \right\} \nonumber \\
& = \arg \min_{x\in X} \left\{ \frac{1}{2} \|x-x_n\|^2 + t_n \l \nabla f(x_n), x\r \right\},
\end{align}
where $\l \cdot, \cdot\r$ denotes the inner product on $X$ and $\|\cdot\|$ denotes the induced norm. When $X$ is a general Banach space, inner product is no longer available and $\nabla f(x_n)$ is not necessarily an element in $X$; instead $\nabla f(x_n)$ is an element in $X^*$. Therefore, in order to guarantee $\l \nabla f(x_n), x\r$ to be meaningful, this expression should be understood as the dual pairing between $X^*$ and $X$. 
On the other hand, since no Hilbert space norm is available, $\frac{1}{2} \|x-x_n\|^2$ in (\ref{smd.22}) should be replaced by other suitable distance-like functionals. 
In order to capture the feature of the sought solution, a suitable convex penalty function $\R : X \to (-\infty, \infty]$ is usually chosen to enhance the feature. 
In such a situation, one may use the Bregman distance induced by $\R$ to fulfill the purpose. To be more precise, if $\p \R(x_n) \ne \emptyset$, we may take $\xi_n\in \p \R(x_n)$ suitably and then use the Bregman distance
$$
D_\R^{\xi_n}(x, x_n) = \R(x) - \R(x_n) - \l \xi_n, x-x_n\r 
$$
to replace $\frac{1}{2} \|x-x_n\|^2$ in (\ref{smd.22}). This leads to the new updating formula
$$
x_{n+1} \in \arg\min_{x\in X} \left\{ D_\R^{\xi_n}(x, x_n) + t_n \l \nabla f(x_n), x\r \right\}
$$
for $x_{n+1}$. By the expression of $D_\R^{\xi_n}(x, x_n)$, we have
$$
x_{n+1} \in \arg\min_{x\in X} \left\{ \R(x) -\l \xi_n - t_n \nabla f(x_n), x\r \right\}. 
$$
Let $\xi_{n+1} = \xi_n - t_n \nabla f(x_n)$, then 
$$
x_{n+1} \in \arg\min_{x\in X} \left\{ \R(x) - \l \xi_{n+1}, x\r\right\}. 
$$
If $x_{n+1}$ is well-defined, then by the optimality condition on $x_{n+1}$ we have $\xi_{n+1} \in \p \R(x_{n+1})$. Therefore, we can repeat the above procedure, leading to the algorithm 
\begin{align}\label{smd.23}
\begin{split}
& x_n = \arg\min_{x\in X} \left\{\R(x) - \l \xi_n, x\r\right\},\\
& \xi_{n+1} = \xi_n - t_n \nabla f(x_n) 
\end{split}
\end{align}
for solving (\ref{smd.21}) in Banach spaces, which is called the mirror descent method in optimization community; see \cite{BT2003,B2015,NY1983,N2007}.

In order to apply the mirror descent method to solve (\ref{rbdgm.1}) when only noisy data $y_i^\d$ are available, we may consider a problem of the form (\ref{smd.21}) with 
$$
f(x) = \frac{1}{2} \sum_{i=1}^p \|A_i x - y_i^\d\|^2. $$
Note that $\nabla f(x) = \sum_{i=1}^p A_i^* (A_i x - y_i^\d)$. Therefore, an application of (\ref{smd.23}) gives
\begin{align}\label{smd.24}
\begin{split}
& x_n = \arg\min_{x\in X} \left\{\R(x) - \l \xi_n, x\r\right\},\\
& \xi_{n+1} = \xi_n -t_n \sum_{i=1}^p A_i^*(A_i x_n - y_i^\d).  
\end{split}
\end{align}
This is exactly the method (\ref{MD}) which has been considered in \cite{BH2012,Jin2021,JW2013}. 
Clearly, the implementation of (\ref{smd.24}) requires to calculate $A_i^*(A_i x_n - y_i^\d)$ for all $i = 1, \cdots, p$ at each iteration. In case $p$ is huge, each iteration step in (\ref{smd.24}) requires a huge amount of computational work. In order to reduce the computational load per iteration, one may randomly choose a subset $I_n$ from $\{1, \cdots, p\}$ with small size to form the partial term $$
f_{I_n}(x) := \frac{1}{2} \sum_{i\in I_n} \|A_i x -y_{i}^\d\|^2
$$ 
of $f$ and use its gradient $\nabla f_{I_n}(x_n) = \sum_{i\in I_n} A_i^* (A_i x_n - y_i^\d)$ at $x_n$ as a replacement of  $\nabla f(x_n)$ in (\ref{smd.24}). This leads to the following mini-batch stochastic mirror descent method for solving (\ref{rbdgm.1}) with noisy data. 

\begin{algorithm}\label{alg:SMD}
Fix a batch size $b$, pick the initial guess $\xi_0 =0$ in $X^*$ and set $\xi_0^\d := \xi_0$. For $n\ge 0$ do the following:

\begin{enumerate} [leftmargin = 0.8cm]
\item[\emph{(i)}] Calculate $x_n^\d \in X$ by solving \begin{align}\label{smd.35}
x_n^\d = \arg\min_{x\in X} \left\{\R(x) - \l \xi_n^\d, x\r\right\}; 
\end{align}

\item[\emph{(ii)}] Randomly select a subset $I_n\subset \{1, \cdots, p\}$ with $|I_n|=b$ via the uniform distribution;

\item[\emph{(iii)}] Choose a step-size $t_n^\d\ge 0$;

\item[\emph{(iv)}] Define $\xi_{n+1}^\d \in X^*$ by
\begin{align}\label{smd.25}
\xi_{n+1}^\d = \xi_n^\d - t_n^\d \sum_{i\in I_n} A_i^*\left(A_i x_n^\d - y_i^\d\right).
\end{align}
\end{enumerate}
\end{algorithm}

It should be pointed out that the choice of the step-size $t_n^\d$ plays a crucial role on the convergence of Algorithm \ref{alg:SMD}. We will consider in Section \ref{sect3} several rules for choosing the step-size. Note also that, at each iteration step of Algorithm \ref{alg:SMD}, $x_n^\d$ is defined by a minimization problem (\ref{smd.35}).
When $\R$ is proper, lower semi-continuous and strongly convex, it follows from Proposition \ref{dgm:prop23} that $x_n^\d$ is well-defined. Moreover, for many important choices of $\R$, $x_n^\d$ can be given by an explicit formula, see Section \ref{sect5} for instance, and thus the calculation of $x_n^\d$ does not take much time.

There exist extensive studies on the mirror descent method and its stochastic variants in optimization, see \cite{B2015,D2018,NL2014,NJLS2009,ZMBBG2020} for instance. The existing works either depend crucially on the finite-dimensionality of the underlying spaces or establish only error estimates in terms of objective function values, and therefore they are not applicable to our Algorithm \ref{alg:SMD} for ill-posed problems. We need to develop new analysis. Our analysis of Algorithm \ref{alg:SMD} is based on the following assumption which is assumed throughout the paper. 

\begin{Assumption}\label{dgm.ass1}
\begin{enumerate}[leftmargin = 0.8cm]
\item[\emph{(i)}] $X$ is a Banach space, $Y_i$ is a Hilbert space and $A_i: X \to Y_i$ is a bounded linear operator for each $i =1, \cdots, p$;

\item[\emph{(ii)}] $\R : X\to (-\infty, \infty]$ is a proper, lower semi-continuous, strongly convex function with modulus of convexity $\sigma>0$;

\item[\emph{(iii)}] The system $A_ix = y_i$, $i=1, \cdots, p$, has a solution in $\emph{dom}(\R)$.
\end{enumerate}
\end{Assumption}

According to Assumption \ref{dgm.ass1} (ii) and Proposition \ref{dgm.prop22}, the Legendre-Fenchel conjugate $\R^*$ of $\R$ is continuous differentiable with Lipschitz continuous gradient, i.e.
\begin{align}\label{rbdgm.41}
\|\nabla \R^*(\xi)-\nabla \R^*(\eta)\| \le \frac{\|\xi-\eta\|}{2\sigma}, \quad \forall \xi, \eta \in X^*.
\end{align}
Moreover, by virtue of Assumption \ref{dgm.ass1} and Proposition \ref{dgm:prop23}, the linear system (\ref{smd.1}) has a unique $\R$-minimizing solution
$x^\dag$ and Algorithm \ref{alg:SMD} is well-defined. By the optimality condition on $x_n^\d$ and Proposition \ref{dgm:prop23} we have 
\begin{align}\label{rdbgm.61}
\xi_n^\d \in \p \R(x_n^\d) \quad \mbox{ and } \quad x_n^\d = \nabla \R^*(\xi_n^\d)
\end{align}
which is the starting point of our convergence analysis.

\section{\bf Convergence}\label{sect3}
\setcounter{equation}{0}

In order to establish a convergence result on  Algorithm \ref{alg:SMD}, we need to specify a probability space on which
the analysis will be carried out. Let $\Lambda_p:=\{1, \cdots, p\}$ and let ${\mathcal S}_p$ denote the $\sigma$-algebra consisting of all subsets of $\Lambda_p$. Recall that, at each iteration step, a subset of indices is randomly chosen from $\Lambda_p$ via the uniform distribution. Therefore, for each $n\ge 1$, it is natural to consider $x_n^\d$ and $\xi_n^\d$ on the sample space
$$
\Lambda_p^n : = \underbrace{\Lambda_p \times \cdots \times \Lambda_p}_{n \mbox{ copies}}
$$
equipped with the $\sigma$-algebra ${\mathcal S}_p^{\otimes n}$ and the uniform distributed probability, denoted as ${\mathbb P}_n$.
According to the Kolmogorov extension theorem (\cite{BW2016}), there exists a unique probability ${\mathbb P}$ defined on the
measurable space $(\Omega, {\mathcal F}):= (\Lambda_p^\infty, {\mathcal S}_p^{\otimes \infty})$ such that each ${\mathbb P}_n$ is
consistent with ${\mathbb P}$. Let ${\mathbb E}$ denote the expectation on the probability space $(\Omega, {\mathcal F}, {\mathbb P})$.
Given a Banach space $X$, we use $L^2(\Omega, X)$ to denote the space consisting of all random variables $x$ with values in $X$
such that $\EE[\|x\|^2]$ is finite; this is a Banach space under the norm
$$
\left(\EE[\|x\|^2]\right)^{1/2}.
$$
Concerning Algorithm \ref{alg:SMD} we will use $\{{\mathcal F}_n\}$ to denote the natural filtration,
where ${\mathcal F}_n := \sigma(I_0, \cdots, I_{n-1})$ for each $n \ge 1$. We will frequently use the identity
\begin{align}\label{cond.1}
\EE[\phi] = \EE[\EE[\phi|{\mathcal F}_n]]
\end{align}
for any random variable $\phi$ on $(\Omega, {\mathcal F}, {\mathbb P})$. where $\EE[\phi|{\mathcal F}_n]$ denotes the expectation of $\phi$ conditioned on ${\mathcal F}_n$.

In this section we will prove some convergence results on Algorithm \ref{alg:SMD} under suitable choices of the step-sizes. The convergence of Algorithm \ref{alg:SMD} will be established by investigating the convergence property of its counterpart for exact data together with its stability property. For simplicity of exposition, for each index set $I =\{i_1, \cdots, i_b\}\subset \{1, \cdots, p\}$ of size $b$ we set 
$$
Y_I := Y_{i_1} \times \cdots \times Y_{i_b}, \quad y_I^\d := (y_{i_1}^\d, \cdots, y_{i_b}^\d), \quad 
\d_I^2 = \d_{i_1}^2 + \cdots + \d_{i_b}^2
$$
and define $A_I: X \to Y_I$ by 
$$
A_I x := (A_{i_1} x, \cdots, A_{i_b} x), \quad \forall x \in X.
$$
Let $A_I^*$ denote the adjoint of $A_I$. Then the updating formula (\ref{smd.25}) of $\xi_{n+1}^\d$ from $\xi_n^\d$ can be rephrased as 
$$
\xi_{n+1}^\d =\xi_n^\d - t_n^\d A_{I_n}^* (A_{I_n} x_n^\d - y_{I_n}^\d). 
$$
We start with the following result. 

\begin{lemma}\label{rdbgm.lem11}
Let Assumption \ref{dgm.ass1} hold. Consider Algorithm \ref{alg:SMD} and assume that 
$$
0\le t_n^\d \le \min\left\{\frac{\mu_0 \|A_{I_n} x_n^\d - y_{I_n}^\d\|^2}{\|A_{I_n}^*(A_{I_n} x_n^\d - y_{I_n}^\d)\|^2}, \mu_1\right\}
$$ 
for all $n \ge 0$, where $\mu_0$ and $\mu_1$ are two positive constants with $c_0:= 1-\mu_0/(4\sigma) >0$. Let $\hat x$ be any solution of (\ref{smd.1}) in $\mbox{dom}(\R)$ and let
$$
\Delta_n^\d := D_\R^{\xi_n^\d} (\hat x, x_n^\d), \quad n=0, 1, \cdots.
$$
Then
\begin{align*}
\EE\left[\Delta_{n+1}^\d\right] - \EE\left[\Delta_n^\d\right] \le \frac{\mu_1 b}{4 c_0 p} \d^2.
\end{align*}
\end{lemma}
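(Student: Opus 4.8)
The plan is to track the Bregman distance $\Delta_n^\d = D_\R^{\xi_n^\d}(\hat x, x_n^\d)$ across one iteration and show the increment is controlled in expectation. First I would compute $\Delta_{n+1}^\d - \Delta_n^\d$ directly from the definition. Using $\xi_{n+1}^\d = \xi_n^\d - t_n^\d A_{I_n}^*(A_{I_n} x_n^\d - y_{I_n}^\d)$ and the identity $x_n^\d = \nabla\R^*(\xi_n^\d)$ from (\ref{rdbgm.61}), one has
\begin{align*}
\Delta_{n+1}^\d - \Delta_n^\d &= D_\R^{\xi_{n+1}^\d}(\hat x, x_{n+1}^\d) - D_\R^{\xi_n^\d}(\hat x, x_n^\d) \\
&= \R^*(\xi_{n+1}^\d) - \R^*(\xi_n^\d) - \l \xi_{n+1}^\d - \xi_n^\d, \hat x\r,
\end{align*}
after using the conjugacy relation $D_\R^{\xi}(\hat x, \nabla\R^*(\xi)) = \R^*(\xi) + \R(\hat x) - \l\xi,\hat x\r$ and cancelling the $\R(\hat x)$ terms. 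The first two terms are estimated by the descent lemma coming from the Lipschitz bound (\ref{rbdgm.41}): $\R^*(\xi_{n+1}^\d) - \R^*(\xi_n^\d) \le \l\nabla\R^*(\xi_n^\d), \xi_{n+1}^\d - \xi_n^\d\r + \frac{1}{4\sigma}\|\xi_{n+1}^\d - \xi_n^\d\|^2$. Substituting $\xi_{n+1}^\d - \xi_n^\d = -t_n^\d A_{I_n}^*(A_{I_n}x_n^\d - y_{I_n}^\d)$ and using $\nabla\R^*(\xi_n^\d) = x_n^\d$, the linear term combines with $-\l\xi_{n+1}^\d-\xi_n^\d,\hat x\r$ to give $t_n^\d \l A_{I_n}^*(A_{I_n}x_n^\d - y_{I_n}^\d), \hat x - x_n^\d\r = t_n^\d\l A_{I_n}x_n^\d - y_{I_n}^\d, A_{I_n}\hat x - A_{I_n}x_n^\d\r$.

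Next I would exploit that $\hat x$ solves (\ref{smd.1}), so $A_{I_n}\hat x = y_{I_n}$, the exact data on the block $I_n$. Writing $A_{I_n}\hat x - A_{I_n}x_n^\d = (y_{I_n} - y_{I_n}^\d) - (A_{I_n}x_n^\d - y_{I_n}^\d)$, the inner product becomes $-\|A_{I_n}x_n^\d - y_{I_n}^\d\|^2 + \l A_{I_n}x_n^\d - y_{I_n}^\d, y_{I_n} - y_{I_n}^\d\r$, and the latter is bounded by $\|A_{I_n}x_n^\d - y_{I_n}^\d\|\,\d_{I_n}$ using (\ref{rbdgm.2}). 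Collecting everything,
\begin{align*}
\Delta_{n+1}^\d - \Delta_n^\d \le -t_n^\d\|A_{I_n}x_n^\d - y_{I_n}^\d\|^2 + t_n^\d\|A_{I_n}x_n^\d - y_{I_n}^\d\|\d_{I_n} + \frac{(t_n^\d)^2}{4\sigma}\|A_{I_n}^*(A_{I_n}x_n^\d - y_{I_n}^\d)\|^2.
\end{align*}
The step-size constraint $t_n^\d \le \mu_0\|A_{I_n}x_n^\d - y_{I_n}^\d\|^2 / \|A_{I_n}^*(A_{I_n}x_n^\d - y_{I_n}^\d)\|^2$ bounds the quadratic term by $\frac{\mu_0}{4\sigma} t_n^\d \|A_{I_n}x_n^\d - y_{I_n}^\d\|^2$, so the first and third terms combine to $-c_0 t_n^\d\|A_{I_n}x_n^\d - y_{I_n}^\d\|^2$ with $c_0 = 1 - \mu_0/(4\sigma) > 0$. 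Then with $a := \|A_{I_n}x_n^\d - y_{I_n}^\d\|$ and using the elementary inequality $t_n^\d a\,\d_{I_n} - c_0 t_n^\d a^2 \le \frac{t_n^\d \d_{I_n}^2}{4 c_0}$ (completing the square, valid since $t_n^\d \ge 0$ and $c_0 > 0$), followed by $t_n^\d \le \mu_1$, I get the pointwise bound $\Delta_{n+1}^\d - \Delta_n^\d \le \frac{\mu_1}{4 c_0}\d_{I_n}^2$.

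Finally I would take conditional expectation given $\mathcal F_n$ and then total expectation via (\ref{cond.1}). Since $I_n$ is drawn uniformly from the size-$b$ subsets of $\{1,\dots,p\}$ independently of $\mathcal F_n$, each fixed index $i$ appears in $I_n$ with probability $b/p$, hence $\EE[\d_{I_n}^2 \mid \mathcal F_n] = \sum_{i=1}^p \frac{b}{p}\d_i^2 = \frac{b}{p}\d^2$. Therefore $\EE[\Delta_{n+1}^\d] - \EE[\Delta_n^\d] \le \frac{\mu_1}{4 c_0}\cdot\frac{b}{p}\d^2$, which is the claimed estimate. The main obstacle — though it is more bookkeeping than deep — is handling the Bregman-distance difference cleanly: one must be careful that $\xi_{n+1}^\d$ is indeed a subgradient at $x_{n+1}^\d$ (guaranteed by (\ref{rdbgm.61})) so that $\Delta_{n+1}^\d$ is well-defined, and one must correctly pass through the conjugate-function identities rather than manipulating $D_\R$ directly, since $\R$ itself need not be differentiable. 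The probabilistic step is routine once the pointwise bound is in hand.
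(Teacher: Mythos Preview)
Your proof is correct and follows essentially the same route as the paper's: rewrite $\Delta_{n+1}^\d - \Delta_n^\d$ via the conjugate identity $D_\R^{\xi}(\hat x,\nabla\R^*(\xi)) = \R(\hat x)+\R^*(\xi)-\l\xi,\hat x\r$, apply the descent lemma from (\ref{rbdgm.41}), use $A_{I_n}\hat x = y_{I_n}$ and the step-size bound to obtain $-c_0 t_n^\d\|A_{I_n}x_n^\d - y_{I_n}^\d\|^2 + t_n^\d\d_{I_n}\|A_{I_n}x_n^\d - y_{I_n}^\d\|$, complete the square, and then take expectation. The only cosmetic difference is that you compute $\EE[\d_{I_n}^2]=\frac{b}{p}\d^2$ via the inclusion probability $b/p$ of each index, whereas the paper writes out the equivalent combinatorial count $\binom{p-1}{b-1}/\binom{p}{b}$.
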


\begin{proof}
Note that
$$
\Delta_{n+1}^\d - \Delta_n^\d = \left(\l \xi_{n+1}^\d, x_{n+1}^\d - \hat x\r - \R(x_{n+1}^\d)\right)
+ \left(\R(x_n^\d) + \l \xi_n^\d, \hat x - x_n^\d\r \right).
$$
By using (\ref{rdbgm.61}) and (\ref{dgm.22}) we have
$$
\R(x_n^\d) + \R^*(\xi_n^\d) = \l \xi_n^\d, x_n^\d\r
$$
for all $n \ge 0$. Therefore
\begin{align*}
\Delta_{n+1}^\d - \Delta_n^\d
& = \left(\R^*(\xi_{n+1}^\d) - \l \xi_{n+1}^\d, \hat x\r\right) - \left(\R^*(\xi_n^\d) - \l \xi_n^\d, \hat x\r \right).
\end{align*}
Using $x_n^\d = \nabla \R^*(\xi_n^\d)$ in (\ref{rdbgm.61}) and the inequality (\ref{rbdgm.41}) on $\nabla \R^*$, we can obtain
\begin{align*}
\Delta_{n+1}^\d - \Delta_n^\d
& = \left(\R^*(\xi_{n+1}^\d) - \R^*(\xi_n^\d) - \l \xi_{n+1}^\d -\xi_n^\d, \nabla \R^*(\xi_n^\d)\r\right) \\
& \quad \,  + \l \xi_{n+1}^\d - \xi_n^\d, x_n^\d-\hat x\r \\
& \le \frac{1}{4\sigma} \|\xi_{n+1}^\d - \xi_n^\d\|^2 + \l \xi_{n+1}^\d - \xi_n^\d, x_n^\d-\hat x\r.
\end{align*}
According to the definition of $\xi_{n+1}^\d$ and $A_{I_n}\hat x = y_{I_n}$, we can further obtain
\begin{align*}
\Delta_{n+1}^\d - \Delta_n^\d
& \le \frac{1}{4\sigma} \left(t_n^\d\right)^2 \| A_{I_n}^* (A_{I_n} x_n^\d - y_{I_n}^\d)\|^2
- t_n^\d \l A_{I_n} x_n^\d-y_{I_n}^\d, A_{I_n} x_n^\d - y_{I_n}\r.
\end{align*}
By the given condition on $t_n^\d$ and the Cauchy-Schwarz inequality, we then obtain 
\begin{align*} 
\Delta_{n+1}^\d - \Delta_n^\d 
& \le \frac{\mu_0}{4\sigma} t_n^\d \|A_{I_n} x_n^\d - y_{I_n}^\d\|^2 - t_n^\d \l A_{I_n} x_n^\d - y_{I_n}^\d, A_{I_n} x_n^\d - y_{I_n}\r \\
& \le - c_0 t_n^\d \|A_{I_n} x_n^\d - y_{I_n}^\d\|^2
+  t_n^\d \d_{I_n} \|A_{I_n} x_n^\d-y_{I_n}^\d\|\\
& \le \frac{1}{4c_0} t_n^\d \d_{I_n}^2 \le \frac{\mu_1}{4c_0} \d_{I_n}^2.
\end{align*}
By taking the expectation and using $\sum_{I: |I|=b}$ to denote a sum over all subsets $I\subset \{1, \cdots, n\}$ with $|I|=b$, we have
\begin{align*}
\EE[\Delta_{n+1}^\d] - \EE[\Delta_n^\d] 
& \le \frac{\mu_1}{4c_0} \EE[\d_{I_n}^2] = \frac{\mu_1}{4 c_0} \frac{1}{{p \choose b}} \sum_{I: |I|=b} \d_I^2 = \frac{\mu_1}{4 c_0} \frac{1}{{p\choose b}} \sum_{I: |I|=b} \sum_{i \in I} \d_i^2\\ 
& = \frac{\mu_1}{4 c_0} \frac{1}{{p\choose b}} \sum_{i=1}^p \sum_{I: |I|=b \, \& \, i \in I } \d_i^2 = \frac{\mu_1}{4 c_0} \frac{{p-1\choose b-1}}{{p\choose b}} \sum_{i=1}^p \d_i^2 \\
& = \frac{\mu_1 b}{4 c_0 p} \d^2
\end{align*}
which shows the desired inequality.
\end{proof}

Next we consider Algorithm \ref{alg:SMD} with exact data and drop the superscript $\d$ for every quantity defined by the algorithm,  Thus $x_n, \xi_n$ denote the corresponding iterative sequences and $t_n$ denotes the step-size. 
We now show a convergence result for Algorithm \ref{alg:SMD} with exact data by demonstrating that
$\{x_n\}$ is a Cauchy sequence in $L^2(\Omega, X)$.

\begin{theorem}\label{rdbgm.thm2}
Let Assumption \ref{dgm.ass1} hold. Consider Algorithm \ref{alg:SMD} with exact data and assume that 
\begin{align}\label{smd.tn}
\mu_2\le  t_n \le  \min\left\{\frac{\mu_0 \|A_{I_n} x_n - y_{I_n}\|^2}{\|A_{I_n}^*(A_{I_n} x_n - y_{I_n})\|^2} , \mu_1 \right\} \quad \mbox{ when } A_{I_n} x_n \ne y_{I_n},
\end{align}
where $\mu_0$, $\mu_1$ and $\mu_2$ are positive numbers with $c_0 := 1- \mu_0/(4 \sigma)>0$. Then
$$
\EE[\|x_n-x^\dag\|^2] \to 0 \quad \mbox{ and } \quad
\EE\left[D_\R^{\xi_n}(x^\dag, x_n)\right] \to 0
$$
as $n \to \infty$, where $x^\dag$ denotes the unique $\R$-minimizing solution of (\ref{smd.1}).
\end{theorem}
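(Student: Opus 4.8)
The plan is to establish convergence in two stages: first for the exact-data iteration (this theorem), then later bootstrap to the noisy case via stability. For the exact-data case the strategy is the classical one of showing $\{x_n\}$ is Cauchy in $L^2(\Omega,X)$, then identifying the limit as $x^\dag$.

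\textbf{Step 1: Monotonicity and summability.} Specializing Lemma \ref{rdbgm.lem11} to exact data (so $\d=0$), I obtain for any solution $\hat x\in\mathrm{dom}(\R)$ that $\EE[\Delta_{n+1}]\le\EE[\Delta_n]$, where $\Delta_n:=D_\R^{\xi_n}(\hat x,x_n)$; hence $\EE[\Delta_n]$ is nonincreasing and convergent. Moreover, tracking the intermediate inequality in the proof of that lemma more carefully (before discarding the negative term), I get, conditioned on $\mathcal F_n$,
\begin{align*}
\EE[\Delta_{n+1}\mid\mathcal F_n]\le \Delta_n - c_0\, \EE\!\left[t_n\|A_{I_n}x_n-y_{I_n}\|^2\mid\mathcal F_n\right].
\end{align*}
Using the lower bound $t_n\ge\mu_2$ on the event $A_{I_n}x_n\ne y_{I_n}$, the conditional expectation of the residual term dominates $c_0\mu_2\cdot\frac{1}{p}\sum_{i=1}^p\|A_i x_n-y_i\|^2$ (after the same combinatorial averaging over index sets as in the lemma). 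Taking full expectations and summing over $n$ yields
\begin{align*}
c_0\mu_2\sum_{n=0}^\infty \EE\!\left[\tfrac1p\textstyle\sum_{i=1}^p\|A_i x_n-y_i\|^2\right]\le \EE[\Delta_0]<\infty,
\end{align*}
so in particular $\EE[\|A x_n - y\|^2]\to 0$.

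\textbf{Step 2: Cauchy property.} To show $\{x_n\}$ is Cauchy in $L^2(\Omega,X)$, I use strong convexity: by (\ref{dgm.24}), $\sigma\EE[\|x_n-x_m\|^2]\le\EE[D_\R^{\xi_m}(x_n,x_m)]$ (or a symmetrized Bregman quantity). The standard device is the three-point identity $D_\R^{\xi_m}(x_n,x_m)=\Delta_m-\Delta_n+\l\xi_n-\xi_m, x_n-\hat x\r$ for $n\ge m$. The first two terms are controlled by the convergence of $\EE[\Delta_n]$. For the cross term, $\xi_n-\xi_m=-\sum_{k=m}^{n-1}t_k A_{I_k}^*(A_{I_k}x_k-y_{I_k})$, so $\l\xi_n-\xi_m,x_n-\hat x\r=-\sum_{k=m}^{n-1}t_k\l A_{I_k}x_k-y_{I_k}, A_{I_k}(x_n-\hat x)\r$; each summand is bounded via Cauchy--Schwarz by $t_k\|A_{I_k}x_k-y_{I_k}\|\,\|A_{I_k}(x_n-x_k)\| + t_k\|A_{I_k}x_k-y_{I_k}\|\,\|A_{I_k}x_k-y_{I_k}\|$, and one uses $t_k\le\mu_1$ together with boundedness of $\|A_{I_k}\|$ and the square-summability from Step 1, plus a uniform $L^2$-bound on $\|x_n-x_k\|$ coming from the boundedness of $\EE[\Delta_n]$ and (\ref{dgm.24}). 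Assembling these, $\EE[D_\R^{\xi_m}(x_n,x_m)]\to 0$ as $m,n\to\infty$, giving the Cauchy property; let $x_*$ denote the $L^2(\Omega,X)$ limit.

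\textbf{Step 3: Identifying the limit.} From $\EE[\|Ax_n-y\|^2]\to 0$ and boundedness of $A$, passing to the limit gives $Ax_*=y$ almost surely, so $x_*$ is a solution. Weak lower semicontinuity of $\R$ (and of the Bregman-type functionals) together with the monotone decrease of $\EE[\Delta_n]$ applied with $\hat x=x^\dag$, combined with the minimality property (\ref{smd.2}) of $x^\dag$, forces $x_*=x^\dag$ almost surely; this is where one argues that $\EE[\R(x_n)]\to\R(x^\dag)$ and then that $\EE[\Delta_n^{x^\dag}]\to 0$. Finally, $\EE[\|x_n-x^\dag\|^2]\le\sigma^{-1}\EE[D_\R^{\xi_n}(x^\dag,x_n)]\to 0$ completes the proof.

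\textbf{Main obstacle.} The delicate point is the cross-term estimate in Step 2: unlike the deterministic Landweber-type analysis, here $\xi_n-\xi_m$ is a random sum of $I_k$-dependent terms, and one must bound it using only the conditional structure (the filtration $\mathcal F_k$) without a telescoping cancellation. Controlling $\sum_k t_k\|A_{I_k}x_k-y_{I_k}\|\,\|A_{I_k}(x_n-x_k)\|$ requires combining the square-summable residuals with a uniform (in $n$) $L^2$-bound on the increments $x_n-x_k$, and care is needed because $x_n$ for $n>k$ is \emph{not} $\mathcal F_{k+1}$-measurable, so one should first fix $n$, estimate, then take expectations, or else symmetrize the Bregman distance to avoid referencing the far-future iterate inside a conditional expectation. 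Getting this bookkeeping right — so that the bound is genuinely $o(1)$ as $m\to\infty$ uniformly in $n\ge m$ — is the crux of the argument.
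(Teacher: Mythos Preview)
Your overall strategy matches the paper's: establish monotonicity and summability of residuals, prove a Cauchy property via the three-point identity, then identify the limit. Step 1 is correct and is exactly what the paper does.

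The gap is in Step 2, precisely at the point you flagged as the main obstacle. After your decomposition, the term you must control is
\[
\sum_{k=m}^{n-1} t_k\,\|A_{I_k}x_k-y_{I_k}\|\,\|A_{I_k}(x_n-x_k)\|.
\]
Bounding $t_k\le\mu_1$, $\|A_{I_k}\|\le L$, and $\|x_n-x_k\|$ uniformly (which is legitimate: $\Delta_n$ is pathwise nonincreasing, so $\|x_n-x_k\|\le 2\sigma^{-1/2}\Delta_0^{1/2}$), you are left with a constant times $\sum_{k=m}^{n-1}\EE\big[\|A_{I_k}x_k-y_{I_k}\|\big]\le C\sum_{k=m}^{n-1}\Phi_k^{1/2}$, where $\Phi_k:=\EE\big[\tfrac{1}{\binom{p}{b}}\sum_{I}\|A_Ix_k-y_I\|^2\big]$. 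But Step 1 gives only $\sum_k\Phi_k<\infty$, which does \emph{not} imply $\sum_k\Phi_k^{1/2}<\infty$, so this bound is not $o(1)$ uniformly in $n\ge m$. Cauchy--Schwarz over $k$ fares no better: it produces a factor $(n-m)^{1/2}$ that blows up. Thus your argument, as written, does not close.

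The paper resolves this with an extra idea you are missing: it does \emph{not} try to show the full sequence is Cauchy directly. Instead it constructs a subsequence $\{n_l\}$ with the monotonicity property $\Phi_{n_l}\le\Phi_n$ for all $n\le n_l$. In the cross-term, the paper bounds $\EE[\|A_{I_k}x_{n_l}-y_{I_k}\|^2]$ crudely by $\binom{p}{b}\Phi_{n_l}$ (using $\|A_{I_k}x_{n_l}-y_{I_k}\|^2\le\sum_I\|A_Ix_{n_l}-y_I\|^2$, which sidesteps the measurability issue you noted), and then the subsequence property turns $\Phi_k^{1/2}\Phi_{n_l}^{1/2}$ into $\Phi_k$, which \emph{is} summable. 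This yields $\sup_{l\ge k}\EE[D_\R^{\xi_{n_k}}(x_{n_l},x_{n_k})]\to 0$, hence $\{x_{n_l}\}$ is Cauchy in $L^2(\Omega,X)$. Convergence of the full sequence then follows from the monotonicity of $\EE[D_\R^{\xi_n}(x^\dag,x_n)]$ once the limit along the subsequence is identified as $x^\dag$. Your Step 3 is in the right spirit, but note that proving $\EE[\R(x_n)]\to\R(x^\dag)$ and $x_*=x^\dag$ also relies on the same cross-term control (the paper's equation for $\EE[\langle\xi_{n_l},x_{n_l}-\hat x\rangle]\to 0$), so the subsequence device is needed there as well.
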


\begin{proof}
Let $\hat x$ be any solution of (\ref{smd.1}) in $\mbox{dom}(\R)$ and define 
$$
\Delta_n := D_\R^{\xi_n} (\hat x, x_n). 
$$ 
By the similar argument in the proof of Lemma \ref{rdbgm.lem11} we can obtain 
$$
\Delta_{n+1} - \Delta_n \le - c_0 t_n \|A_{I_n} x_n - y_{I_n}\|^2 \le - c_0 \mu_2 \|A_{I_n} x_n - y_{I_n}\|^2. 
$$
Consequently 
\begin{align}\label{rdbgm.62}
&\EE[\Delta_{n+1}] - \EE[\Delta_n] \nonumber \\
& \le - c_0 \mu_2 \EE \left[ \|A_{I_n} x_n - y_{I_n}\|^2 \right] = - c_0 \mu_2 \EE\left[ \EE \left[ \|A_{I_n} x_n - y_{I_n}\|^2| {\mathcal F_n}\right]\right] \nonumber \\
& = - c_0 \mu_2 \EE\left[\frac{1}{{p\choose b}} \sum_{I: |I| = b} \|A_I x_n - y_I\|^2 \right]. 
\end{align}
This shows that $\{\EE[\Delta_n]\}$ is monotonically decreasing and therefore 
\begin{align}\label{rdbgm.63}
\lim_{n\to \infty} \EE[\Delta_n] \mbox{ exists} \quad \mbox{and} \quad
\lim_{n\to \infty} \Phi_n =0
\end{align}
where 
$$
\Phi_n:= \EE\left[\frac{1}{{p\choose b}} \sum_{I: |I|=b}  \|A_I x_n - y_I\|^2\right].
$$
If $\Phi_n=0$ for some $n$, we must have 
$$
\sum_{I: |I|=b} \|A_I x_n - y_I\|^2 =0 
$$
along any sample path $(I_0, \cdots, I_{n-1})$ since there exist only finite many such sample paths each with a positive probability; consequently $\xi_m = \xi_n$ and $x_m = x_n$ and hence $\Phi_m = 0$ for all $m\ge n$. Based on these properties of $\Phi_n$, it is possible to choose a
strictly increasing sequence $\{n_l\}$ of integers by setting $n_0:=0$ and, for each $l\ge 1$, by letting
$n_l$ be the first integer satisfying
$$
n_l \ge n_{l-1}+1 \quad \mbox{ and } \quad \Phi_{n_l} \le \Phi_{n_{l-1}}.
$$
It is easy to see that for this sequence $\{n_l\}$ there holds
\begin{align}\label{rdbgm.64}
\Phi_{n_l} \le \Phi_n, \quad \forall 0\le n \le n_l.
\end{align}

For the above chosen sequence $\{n_l\}$ of integers, we are now going to show that
\begin{align}\label{rdbgm.65}
\sup_{l\ge k} \EE\left[D_\R^{\xi_{n_k}}(x_{n_l}, x_{n_k})\right] \to 0 \quad \mbox{as } k \to \infty.
\end{align}
By the definition of the Bregman distance we have for any $l>k$ that
$$
D_\R^{\xi_{n_k}}(x_{n_l}, x_{n_k}) = \Delta_{n_k} - \Delta_{n_l} + \l \xi_{n_l}-\xi_{n_k}, x_{n_l} - \hat x\r.
$$
Taking the expectation gives
\begin{align}\label{rdbgm.66}
\EE\left[D_\R^{\xi_{n_k}}(x_{n_l}, x_{n_k})\right]
= \EE[\Delta_{n_k}] - \EE[\Delta_{n_l}] + \EE\left[\l \xi_{n_l}-\xi_{n_k}, x_{n_l} - \hat x\r\right].
\end{align}
By using the definition of $\{\xi_n\}$ we have
\begin{align*}
\l \xi_{n_l}-\xi_{n_k}, x_{n_l} - \hat x\r
& = \sum_{n=n_k}^{n_l-1} \l \xi_{n+1}-\xi_{n}, x_{n_l} - \hat x\r \\
& = -\sum_{n=n_k}^{n_l-1} t_n \l A_{I_n}^* (A_{I_n} x_n - y_{I_n}), x_{n_l}-\hat x\r \\
& = -\sum_{n=n_k}^{n_l-1} t_n \l A_{I_n} x_n - y_{I_n}, A_{I_n} x_{n_l}- y_{I_n}\r.
\end{align*}
Therefore, by taking the expectation and using the Cauchy-Schwarz inequality, we can obtain
\begin{align*}
& \left|\EE\left[\l \xi_{n_l}-\xi_{n_k}, x_{n_l} - \hat x\r\right]\right|
\le \mu_1 \sum_{n=n_k}^{n_l-1} \left|\EE\left[\l A_{I_n} x_n - y_{I_n}, A_{I_n} x_{n_l}- y_{I_n}\r \right]\right| \nonumber \\
& \qquad \qquad \qquad  \le \mu_1 \sum_{n=n_k}^{n_l-1} \left(\EE\left[\|A_{I_n} x_n - y_{I_n}\|^2\right]\right)^{1/2}
\left(\EE\left[\| A_{I_n} x_{n_l}- y_{I_n}\|^2 \right]\right)^{1/2}.
\end{align*}
Since $x_n $ is ${\mathcal F}_n$-measurable, we have
\begin{align*}
\EE\left[\|A_{I_n} x_n - y_{I_n}\|^2\right]
= \EE\left[\EE\left[\|A_{I_n} x_n - y_{I_n}\|^2|{\mathcal F}_n\right]\right] = \Phi_n.
\end{align*}
We can not treat the term $\EE\left[\| A_{I_n} x_{n_l}- y_{I_n}\|^2 \right]$ in the same way because
$x_{n_l}$ is not necessarily ${\mathcal F}_n$-measurable. However, by noting that
$$
\|A_{I_n} x_{n_l} -y_{I_n}\|^2 \le \sum_{I: |I|=b} \|A_I x_{n_l} - y_I\|^2,
$$
we have
$$
\EE\left[\| A_{I_n} x_{n_l}- y_{I_n}\|^2 \right]
\le \EE\left[\sum_{I: |I|=b} \|A_I x_{n_l} - y_I\|^2\right] \le {p\choose b} \Phi_{n_l}.
$$
Therefore, with $c_{p,b}:= {p\choose b}^{1/2}$, we obtain 
\begin{align}\label{rdbgm.70}
\left|\EE\left[\l \xi_{n_l}-\xi_{n_k}, x_{n_l} - \hat x\r\right]\right|
\le \mu_1 c_{p,b}\sum_{n=n_k}^{n_l-1} \Phi_n^{1/2} \Phi_{n_l}^{1/2}.
\end{align}
By virtue of (\ref{rdbgm.64}) and (\ref{rdbgm.62}) we then have
\begin{align}\label{rdbgm.69}
\left|\EE\left[\l \xi_{n_l}-\xi_{n_k}, x_{n_l} - \hat x\r\right]\right| \le \mu_1 c_{p,b} \sum_{n=n_k}^{n_l-1} \Phi_n
\le \frac{\mu_1 c_{p,b}}{c_0\mu_2} \left(\EE[\Delta_{n_k}] - \EE[\Delta_{n_l}]\right).
\end{align}
Combining this with (\ref{rdbgm.66}) yields
\begin{align*}
\EE\left[D_\R^{\xi_{n_k}}(x_{n_l}, x_{n_k})\right]
\le \left(1+\frac{\mu_1 c_{p,b}}{c_0\mu_2} \right)  \left(\EE[\Delta_{n_k}] - \EE[\Delta_{n_l}]\right)
\end{align*}
which together with the first equation in (\ref{rdbgm.63}) shows (\ref{rdbgm.65}) immediately.

By using the strong convexity of $\R$, we can obtain from (\ref{rdbgm.65}) that
$$
\sup_{l\ge k} \EE[\|x_{n_l}-x_{n_k}\|^2] \to 0 \quad \mbox{as } k\to \infty
$$
which means that $\{x_{n_l}\}$ is a Cauchy sequence in $L^2(\Omega, X)$. Thus there exists a random vector
$x^*\in L^2(\Omega, X)$ such that
\begin{align}\label{rdbgm.67}
\EE[\|x_{n_l} - x^*\|^2 ] \to 0 \quad \mbox{ as } l \to \infty.
\end{align}
By taking a subsequence of $\{n_l\}$ if necessary, we can obtain from (\ref{rdbgm.67}) and the second equation in
(\ref{rdbgm.63}) that
\begin{align}\label{rdbgm.68}
\lim_{l\to \infty} \|x_{n_l} - x^*\|=0 \quad \mbox{and} \quad
\lim_{l\to \infty} \sum_{I: |I|=b} \|A_I x_{n_l} - y_I\|^2 =0
\end{align}
almost surely. Consequently
$$
\sum_{I: |I|=b} \|A_I x^* - y_I\|^2 =0 \ \ \mbox{almost surely},
$$
i.e. $x^*$ is a solution of (\ref{smd.1}) almost surely.

We next show that $x^* \in \mbox{dom}(\R)$ almost surely. It suffices to show $\EE[\R(x^*)] <\infty$.
Recall that $\xi_{n_l} \in \p \R(x_{n_l})$, we have
\begin{align}\label{rdbgm.79}
\R(x_{n_l}) \le \R(x) + \l \xi_{n_l}, x_{n_l}- x\r, \quad \forall x \in X.
\end{align}
By using (\ref{rdbgm.79}) with $x = x^\dag$, taking the expectation, and noting $\xi_{n_0} = \xi_0 =0$,
we can obtain from (\ref{rdbgm.69}) that
\begin{align*}
\EE[\R(x_{n_l})] & \le \R(x^\dag) + \EE\left[\l \xi_{n_l}-\xi_{n_0}, x_{n_l}-x^\dag\r \right] \\
& \le \R(x^\dag) + \frac{\mu_1 c_{p,b}}{c_0\mu_2} \EE\left[D_{\R}^{\xi_0}(x^\dag, x_0)\right] \\
& =: C <\infty.
\end{align*}
Therefore, it follows from the first equation in (\ref{rdbgm.68}), the lower semi-continuity of $\R$
and Fatou's Lemma that
\begin{align}\label{rdbgm.71}
\EE[\R(x^*)] \le \EE\left[\liminf_{l\to \infty} \R(x_{n_l})\right]
\le \liminf_{l\to \infty} \EE\left[\R(x_{n_l})\right] \le C <\infty.
\end{align}
We have thus shown that $x^*\in L^2(\Omega, X)$ is a solution of (\ref{smd.1}) in $\mbox{dom}(\R)$ almost surely.

In order to proceed further, we will show that, for any $\hat x\in L^2(\Omega, X)$ that is a solution of (\ref{smd.1}) in $\mbox{dom}(\R)$ almost surely, there holds
\begin{align}\label{rdbgm.74}
\lim_{l\to \infty} \EE\left[\l \xi_{n_l}, x_{n_l}-\hat x\r\right] =0.
\end{align}
To see this, for any $k<l$ we write
\begin{align*}
\EE\left[\l \xi_{n_l}, x_{n_l}-\hat x\r\right]
 = \EE\left[\l \xi_{n_l} - \xi_{n_k}, x_{n_l} - \hat x\r\right] + \EE\left[\l \xi_{n_k}, x_{n_l} - \hat x\r \right].
\end{align*}
Since $\hat x\in L^2(\Omega, X)$ is a solution of (\ref{smd.1}) in $\mbox{dom}(\R)$ almost surely, by the definition
of $\{\xi_n\}$ and $\xi_0=0$ we can use the similar argument for deriving (\ref{rdbgm.70}) to obtain
\begin{align*}
\left|\EE\left[\l \xi_{n_k}, x_{n_l}-\hat x\r\right]\right| \le \mu_1 c_{p,b} \left(\sum_{n=0}^{n_k-1} \Phi_n^{1/2}\right) \Phi_{n_l}^{1/2}.
\end{align*}
This and the second equation in (\ref{rdbgm.63}) imply for any fixed $k$ that
$\EE[\l \xi_{n_k}, x_{n_l}-\hat x\r]\to 0$ as $l\to \infty$. Hence
\begin{align*}
\limsup_{l\to \infty} \left| \EE\left[\l \xi_{n_l}, x_{n_l}- \hat x\r\right]\right|
& \le \sup_{l\ge k} \left|\EE\left[\l \xi_{n_l} - \xi_{n_k}, x_{n_l} - \hat x\r\right]\right|
\end{align*}
for all $k$.  By virtue of (\ref{rdbgm.69}) and letting $k \to \infty$ we thus obtain (\ref{rdbgm.74}).

Based on (\ref{rdbgm.79}) with $x = x^*$ and (\ref{rdbgm.74}), we can obtain
\begin{align*}
\limsup_{l\to \infty} \EE[\R(x_{n_l})]
\le \EE\left[\R(x^*)\right] + \lim_{l\to \infty} \EE\left[\l \xi_{n_l}, x_{n_l}-x^*\r\right]
= \EE[\R(x^*)]
\end{align*}
which together with (\ref{rdbgm.71}) then implies
\begin{align}\label{rdbgm.75}
\lim_{l\to \infty} \EE[\R(x_{n_l})] = \EE[\R(x^*)]
\end{align}
By using (\ref{rdbgm.75}), (\ref{rdbgm.79}) with $x=x^\dag$,
and (\ref{rdbgm.74}) with $\hat x = x^\dag$, we have
\begin{align*}
\EE[\R(x^*)] = \lim_{l\to \infty} \EE[\R(x_{n_l})]
\le \R(x^\dag) + \lim_{l\to \infty} \EE[\l \xi_{n_l}, x_{n_l}-x^\dag\r] = \R(x^\dag).
\end{align*}
Since $x^*$ is a solution of (\ref{smd.1}) in $\mbox{dom}(\R)$ almost surely, we have $\R(x^*) \ge \R(x^\dag)$ almost surely which implies $\EE[\R(x^*)]\ge \R(x^\dag)$. Consequently $\EE[\R(x^*)] = \R(x^\dag)$
and thus it follows from (\ref{rdbgm.75}) that
\begin{align}\label{rdbgm.77}
\lim_{l\to \infty} \EE[\R(x_{n_l})] = \R(x^\dag).
\end{align}
Finally, from (\ref{rdbgm.77}) and (\ref{rdbgm.74}) with $\hat x = x^\dag$ it follows that
\begin{align*}
\lim_{l\to \infty} \EE\left[D_\R^{\xi_{n_l}}(x^\dag, x_{n_l})\right]
= \lim_{l\to \infty} \left(\EE[\R(x^\dag) - \R(x_{n_l}) - \l \xi_{n_l}, x^\dag-x_{n_l}\r ]\right) = 0.
\end{align*}
By the monotonicity of $\{\EE[D_\R^{\xi_n}(x^\dag, x_n)]\}$ we can conclude
$$
\lim_{n\to \infty} \EE\left[D_\R^{\xi_n}(x^\dag, x_n)\right] = 0
$$
and hence $\lim_{n\to \infty} \EE[\|x_n-x^\dag\|^2] = 0$ by the strong convexity of $\R$.
The proof is therefore complete.
\end{proof}

In order to use Theorem \ref{rdbgm.thm2} to establish the convergence of Algorithm \ref{alg:SMD} with noisy data, we need to investigate the stability of the algorithm, i.e. the behavior of $x_n^\d$ and $\xi_n^\d$ as $\d\to 0$ for each fixed $n$. In the following we will provide stability results of Algorithm \ref{alg:SMD} under the following three choices of step-sizes:

\begin{enumerate}[leftmargin = 1cm]
\item[(s1)] $t_n^\d$ depends only on $I_n$, i.e.
$t_n^\d = t_{I_n}$ and $0<t_I < 4\sigma/\|A_I\|^2$
for all $I\subset \{1, \cdots, p\}$ with $|I|=b$.

\item[(s2)] $t_n^\d$ is chosen according to the formula
$$
t_n^\d =\min\left\{\frac{\mu_0 \|A_{I_n} x_n^\d - y_{I_n}^\d\|^2}{\|A_{I_n}^*(A_{I_n} x_n^\d - y_{I_n}^\d)\|^2}, \tilde \mu_1\right\},
$$
where $\mu_0$ and $\tilde \mu_1$ are two positive constants with $0<\mu_0<4\sigma$. 

\item[(s3)] In case the information of $\d_1, \cdots, \d_p$ is available, choose the step-size $t_n^\d$ according to the rule
\begin{align*}
t_n^\d = \left\{\begin{array}{lll}
\min\left\{\frac{\mu_0 \|A_{I_n} x_n^\d - y_{I_n}^\d\|^2}{\|A_{I_n}^*(A_{I_n} x_n^\d - y_{I_n}^\d)\|^2}, \tilde \mu_1\right\}  & \mbox{ if } \|A_{I_n} x_n^\d - y_{I_n}^\d\| >\tau \d_{I_n}, \\[1.2ex]
0 & \mbox{ otherwise},
\end{array}\right.
\end{align*}
where $\tau\ge 1$, $\mu_0>0$ and $\tilde \mu_1>0$ are preassigned numbers with $0<\mu_0<4\sigma$.
\end{enumerate}

The step-size chosen in (s1) is motivated by the Landweber iteration which uses constant step-size. Along the iterations in Algorithm \ref{alg:SMD}, when the same subset $I$ is repeatedly chosen, (s1) suggests to use the same step-size in computation. The step-sizes chosen by (s1) could be small and thus slows down the computation. The rules given in (s2) and (s3) use the adaptive strategy which may produce large step-sizes. The step-size chosen by (s2) is motivated by the deterministic minimal error method, see \cite{KNS2008}. 
The step-size given in (s3) is motivated by the work on deterministic Landweber-Kaczmarz method, see \cite{HLS2007,Jin2016}, and it incorporates the spirit of the discrepancy principle into the selection. 

Let $\{x_n, \xi_n\}$ be defined by Algorithm \ref{alg:SMD} with exact data, where the step-size $t_n$ is chosen by (s1), (s2) or (s3) with the superscript $\d$ dropped and with $\d_{I_n}$ replaced by $0$. It is easy to see that $t_n$ satisfies (\ref{smd.tn}) in Theorem \ref{rdbgm.thm2}.
Therefore, Theorem \ref{rdbgm.thm2} is applicable to $\{x_n, \xi_n\}$. 

The following result gives a stability property of Algorithm \ref{alg:SMD} with the step-sizes chosen by (s1), (s2) or (s3). 

\begin{lemma}\label{SMD.lem7}
Let Assumption \ref{dgm.ass1} hold. Consider Algorithm \ref{alg:SMD} with the step-size chosen by (s1), (s2) or (s3). Then for each fixed integer $n\ge 0$ there hold
$$
\EE[\|x_n^\d-x_n\|^2] \to 0 \quad \mbox{and}\quad 
\EE[\|\xi_n^\d - \xi_n\|^2] \to 0
$$
as $\d\to 0$.
\end{lemma}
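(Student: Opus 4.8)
The plan is to prove the two claimed limits by induction on $n$. For $n=0$ there is nothing to prove since $\xi_0^\d = \xi_0 = 0$ and hence $x_0^\d = x_0 = \nabla\R^*(0)$, so both quantities are identically zero. Assuming the statement holds at level $n$, I would establish it at level $n+1$ by controlling $\EE[\|\xi_{n+1}^\d - \xi_{n+1}\|^2]$ through the recursion
$$
\xi_{n+1}^\d - \xi_{n+1} = (\xi_n^\d - \xi_n) - \left(t_n^\d A_{I_n}^*(A_{I_n} x_n^\d - y_{I_n}^\d) - t_n A_{I_n}^*(A_{I_n} x_n - y_{I_n})\right),
$$
and then invoking \eqref{rbdgm.41}, which gives $\|x_{n+1}^\d - x_{n+1}\| = \|\nabla\R^*(\xi_{n+1}^\d) - \nabla\R^*(\xi_{n+1})\| \le \|\xi_{n+1}^\d - \xi_{n+1}\|/(2\sigma)$, so that convergence of $\EE[\|\xi_{n+1}^\d-\xi_{n+1}\|^2]$ automatically yields convergence of $\EE[\|x_{n+1}^\d-x_{n+1}\|^2]$.

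The core of the argument is therefore to show that, conditioned on any fixed sample path $(I_0,\dots,I_n)$ — of which there are only finitely many, each with positive probability — one has $\|\xi_{n+1}^\d - \xi_{n+1}\| \to 0$ as $\d \to 0$, together with a uniform bound so that bounded convergence (or simply the finiteness of the sample space at level $n+1$) promotes this to $L^2$ convergence of the expectation. Fixing the path, the induction hypothesis gives $x_n^\d \to x_n$ and $\xi_n^\d \to \xi_n$ deterministically along that path (after passing to the full sample space this is the content of the $L^2$ statement, but on a fixed finite-probability atom $L^2$ convergence forces pointwise convergence). Since $A_{I_n}$ is bounded, $A_{I_n} x_n^\d - y_{I_n}^\d \to A_{I_n} x_n - y_{I_n}$ as $\d \to 0$ using also $\|y_{I_n}^\d - y_{I_n}\| \le \d_{I_n} \to 0$; likewise $A_{I_n}^*(A_{I_n}x_n^\d - y_{I_n}^\d) \to A_{I_n}^*(A_{I_n}x_n - y_{I_n})$. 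It then remains to check that the step-size satisfies $t_n^\d \to t_n$ along this path. This is where the three cases (s1), (s2), (s3) must be treated separately.

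For (s1) the matter is trivial: $t_n^\d = t_{I_n} = t_n$ is a fixed constant depending only on $I_n$, so the difference telescopes immediately. For (s2) and (s3) the step-size is $t_n^\d = \min\{\mu_0\|A_{I_n}x_n^\d - y_{I_n}^\d\|^2 / \|A_{I_n}^*(A_{I_n}x_n^\d - y_{I_n}^\d)\|^2,\ \tilde\mu_1\}$ (with the extra discrepancy gate in (s3)), and the delicate point — the main obstacle — is the possible vanishing of the denominator. I would split into two subcases along the fixed path. If $A_{I_n} x_n \ne y_{I_n}$, then by continuity $A_{I_n}^*(A_{I_n}x_n^\d - y_{I_n}^\d)$ is bounded away from zero for small $\d$ (since $A_{I_n}$ injective on the range direction is not needed — rather, $A_{I_n}^*(A_{I_n}x_n-y_{I_n})\ne 0$ because $\|A_{I_n}^*(A_{I_n}x_n-y_{I_n})\| \cdot \|A_{I_n}x_n-y_{I_n}\| \ge \langle A_{I_n}^*(A_{I_n}x_n-y_{I_n}), \text{something}\rangle$; more simply, $\|A_{I_n}(A_{I_n}x_n-y_{I_n})\|$... one uses $\langle A_{I_n}^*(A_{I_n}x_n-y_{I_n}), x_n - \hat x\rangle = \|A_{I_n}x_n - y_{I_n}\|^2 > 0$ forcing the adjoint term nonzero), so the quotient is continuous at $\d=0$ and $t_n^\d \to t_n$; also in case (s3) the discrepancy condition $\|A_{I_n}x_n^\d - y_{I_n}^\d\| > \tau\d_{I_n}$ holds for small $\d$ since the left side tends to the positive number $\|A_{I_n}x_n - y_{I_n}\|$ while the right side tends to $0$, so the step-size formula matches that of the exact-data iteration. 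If instead $A_{I_n} x_n = y_{I_n}$, then for exact data $t_n$ is (by the convention stated before the lemma) irrelevant because $A_{I_n}^*(A_{I_n}x_n - y_{I_n}) = 0$, so $\xi_{n+1} = \xi_n$ regardless of $t_n$; on the noisy side one shows $t_n^\d A_{I_n}^*(A_{I_n}x_n^\d - y_{I_n}^\d) \to 0$ directly — for (s2) bound $t_n^\d \le \mu_0 \|A_{I_n}x_n^\d - y_{I_n}^\d\|^2/\|A_{I_n}^*(\cdot)\|^2$ so $\|t_n^\d A_{I_n}^*(\cdot)\| \le \mu_0 \|A_{I_n}x_n^\d - y_{I_n}^\d\|^2 / \|A_{I_n}^*(\cdot)\| \le \mu_0 \|A_{I_n}\| \cdot \|A_{I_n}x_n^\d - y_{I_n}^\d\| \to 0$ (using the elementary $\|A_{I_n}^*z\| \ge \|A_{I_n}A_{I_n}^*z\|/\|A_{I_n}\|$... actually simpler: $\|A_{I_n}x_n^\d-y_{I_n}^\d\|^2 = \langle A_{I_n}A_{I_n}^*\cdot\rangle$ bound, or just note $t_n^\d\|A_{I_n}^*(\cdot)\|^2 \le \mu_0\|A_{I_n}x_n^\d-y_{I_n}^\d\|^2 \to 0$ hence $\|\xi_{n+1}^\d-\xi_{n+1}\| \le \|\xi_n^\d-\xi_n\| + t_n^\d\|A_{I_n}^*(\cdot)\|$ with $(t_n^\d\|A_{I_n}^*(\cdot)\|)^2 \le \tilde\mu_1 \cdot t_n^\d\|A_{I_n}^*(\cdot)\|^2 \to 0$); for (s3) either the discrepancy gate eventually forces $t_n^\d = 0$, or if not, the same estimate applies since $\|A_{I_n}x_n^\d - y_{I_n}^\d\| \le \tau\d_{I_n} \to 0$. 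In all cases $\|\xi_{n+1}^\d - \xi_{n+1}\| \to 0$ along the fixed path; summing the (finitely many, uniformly bounded) contributions over all paths gives $\EE[\|\xi_{n+1}^\d - \xi_{n+1}\|^2] \to 0$, and then \eqref{rbdgm.41} closes the induction.
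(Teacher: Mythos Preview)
Your proposal is correct and follows essentially the same route as the paper: induction on $n$, reduction to pathwise convergence via the finiteness of the sample space at each level, the two-case split according to whether $A_{I_n}x_n=y_{I_n}$, the inner-product trick $\l A_{I_n}^*(A_{I_n}x_n-y_{I_n}),x_n-x^\dag\r=\|A_{I_n}x_n-y_{I_n}\|^2>0$ to force the denominator nonzero in the nondegenerate case, and the Lipschitz bound \eqref{rbdgm.41} on $\nabla\R^*$ to close.

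The one place where the paper is cleaner is the degenerate case $A_{I_n}x_n=y_{I_n}$: rather than splitting further by step-size rule and manipulating the quotient as you do, the paper simply observes that all three rules satisfy a uniform bound $0\le t_n^\d\le \hat\mu_1$ and writes
\[
\|\xi_{n+1}^\d-\xi_{n+1}\|\le \|\xi_n^\d-\xi_n\|+\hat\mu_1\|A_{I_n}\|\bigl(\|A_{I_n}\|\|x_n^\d-x_n\|+\d\bigr)\to 0,
\]
which dispatches (s1), (s2), (s3) in one line. Your rule-by-rule estimates in this case are valid but unnecessarily intricate (and the sentence about (s3) has the inequality direction garbled); the uniform bound is all that is needed.
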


\begin{proof}
We show the result by induction on $n$. The result is trivial for $n=0$ because $\xi_0^\d = \xi_0$. Assuming the result holds for some $n\ge 0$, we will show it also holds for $n+1$. Since there are only finitely many sample paths of the form $(I_0, \cdots, I_{n-1})$ each with positive probability, we can conclude from the induction hypothesis that 
\begin{align}\label{SMD-DP.1}
\|x_n^\d - x_n\| \to 0 \quad \mbox{ and } \quad 
\|\xi_n^\d - \xi_n\| \to 0 \quad \mbox{as } \d \to 0
\end{align}
along every sample path $(I_0, \cdots, I_{n-1})$. We now show that along each sample path $(I_0, \cdots, I_{n-1}, I_n)$ there hold 
\begin{align}\label{SMD-DP.2}
\|x_{n+1}^\d - x_{n+1}\|\to 0 \quad \mbox{ and } \quad 
\|\xi_{n+1}^\d - \xi_{n+1}\| \to 0  \quad \mbox{as } \d \to 0. 
\end{align}
To see this, we first show $\|\xi_{n+1}^\d -\xi_{n+1}\| \to 0$ as $\d \to 0$ by considering two cases. 

We first consider the case $A_{I_n} x_n - y_{I_n}=0$. For this case we have $\xi_{n+1} = \xi_n$ and hence 
\begin{align*}
\xi_{n+1}^\d -\xi_{n+1} 
= \xi_n^\d - \xi_n - t_n^\d A_{I_n}^*\left(A_{I_n} (x_n^\d-x_n) - (y_{I_n}^\d-y_{I_n})\right).
\end{align*}
Note that for all the choices of the step-sizes given in (s1), (s2) and (s3) we have $0\le t_n^\d \le \hat \mu_1$, where $\hat \mu_1$ is a constant independent of $n$ and $\d$. Note also that $\d_{I_n}\le \d$. Therefore  
\begin{align*}
\|\xi_{n+1}^\d -\xi_{n+1}\|
\le \|\xi_n^\d - \xi_n\| + \hat \mu_1 \|A_{I_n}\| \left( \|A_{I_n}\| \|x_n^\d-x_n\| + \d\right).
\end{align*}
Consequently, by using (\ref{SMD-DP.1}) we can obtain $\|\xi_{n+1}^\d - \xi_{n+1}\|\to 0$ as $\d \to 0$. 

Next we consider the case $A_{I_n} x_n-y_{I_n}\ne 0$. By using (\ref{SMD-DP.1}) we have $\|A_{I_n} x_n^\d - y_{I_n}^\d\| >\tau \d_{I_n}$ when $\d>0$ is sufficiently small. Note that 
\begin{align*}
\l A_{I_n}^* (A_{I_n} x_n -y_{I_n}), x_n - x^\dag\r 
=\| A_{I_n} x_n - y_{I_n}\|^2 >0 
\end{align*}
which implies that $A_{I_n}^*(A_{I_n} x_n - y_{I_n})\ne 0$. Thus the step-size chosen by (s2) or (s3) is given by  
$$
t_n^\d = \min\left\{\frac{\mu_0\|A_{I_n} x_n^\d - y_{I_n}^\d\|^2}{\|A_{I_n}^* (A_{I_n} x_n^\d - y_{I_n}^\d)\|^2}, \tilde \mu_1\right\}.
$$
By using again (\ref{SMD-DP.1}) we have 
\begin{align}\label{SMD-DP.19}
t_n^\d \to t_n \quad \mbox{ as } \d\to 0. 
\end{align}
For the step-size chosen by (s1) the equation (\ref{SMD-DP.19}) holds trivially as $t_n^\d = t_n = t_{I_n}$. Therefore, by noting that 
\begin{align*}
\xi_{n+1}^\d - \xi_{n+1} 
& = \xi_n^\d - \xi_n - t_n^\d A_{I_n}^*(A_{I_n} x_n^\d - y_{I_n}^\d) + t_n A_{I_n}^* (A_{I_n} x_n - y_{I_n})\\
& = \xi_n^\d - \xi_n + (t_n-t_n^\d) A_{I_n}^*(A_{I_n} x_n - y_{I_n}) \\
& \quad \, + t_n^\d A_{I_n}^* \left(A_{I_n}(x_n-x_n^\d) + (y_{I_n}^\d - y_{I_n})\right),
\end{align*}
we can use (\ref{SMD-DP.1}) and (\ref{SMD-DP.19}) to conclude again $\|\xi_{n+1}^\d - \xi_{n+1}\|\to 0$ as $\d \to 0$. 

Now by virtue of the formula $x_{n+1}^\d = \nabla \R^*(\xi_{n+1}^\d)$, $x_{n+1} = \nabla \R^*(\xi_{n+1})$ and the continuity of $\nabla \R^*$, we have $\|x_{n+1}^\d-x_{n+1}\|\to 0$ as $\d \to 0$. We thus obtain (\ref{SMD-DP.2}). 

Finally, since there are only finitely many sample paths of the form $(I_0, \cdots, I_n)$, we can use (\ref{SMD-DP.2}) to conclude 
$$
\EE\left[\|x_{n+1}^\d - x_{n+1}\|^2\right] \to 0 \quad \mbox{and} \quad \EE\left[\|\xi_{n+1}^\d - \xi_{n+1}\|^2\right] \to 0 
$$
as $\d \to 0$. The proof is thus complete by the induction principle. 
\end{proof}

\begin{theorem}\label{SMD.thm2}
Let Assumption \ref{dgm.ass1} hold. Consider Algorithm \ref{alg:SMD} with the step-size chosen by (s1), (s2) or (s3). If the integer $n_\d$ is chosen such that $n_\d \to \infty$ and $\d^2 n_\d\to 0$ as $\d \to 0$, then
$$
\EE\left[\|x_{n_\d}^\d - x^\dag\|^2\right] \to 0 \quad \mbox{ and } \quad
\EE\left[D_\R^{\xi_{n_\d}^\d} (x^\dag, x_{n_\d}^\d)\right] \to 0
$$
as $\d \to 0$.
\end{theorem}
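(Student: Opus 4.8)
The plan is to combine the convergence result for exact data (Theorem \ref{rdbgm.thm2}) with the finite-step stability result (Lemma \ref{SMD.lem7}) via a diagonal/triangle-inequality argument, using Lemma \ref{rdbgm.lem11} to control how far the noisy iterates drift after the stopping index. First I would fix $\varepsilon>0$. By Theorem \ref{rdbgm.thm2} applied to the exact-data iterates $\{x_n,\xi_n\}$ (which is legitimate since, as noted right after the statement of Lemma \ref{SMD.lem7}, the step-sizes from (s1), (s2), (s3) with $\d=0$ satisfy the hypothesis (\ref{smd.tn})), we have $\EE[\Delta_n] := \EE[D_\R^{\xi_n}(x^\dag,x_n)] \to 0$. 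So I can pick a fixed integer $N$ with $\EE[D_\R^{\xi_N}(x^\dag,x_N)] < \varepsilon$.

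The main estimate is a three-term decomposition for $n\ge N$, in expectation, of $D_\R^{\xi_n^\d}(x^\dag, x_n^\d)$. The natural route is the identity (already used in the proof of Theorem \ref{rdbgm.thm2} and Lemma \ref{rdbgm.lem11})
\begin{align*}
D_\R^{\xi_n^\d}(x^\dag,x_n^\d) = D_\R^{\xi_N^\d}(x^\dag,x_N^\d) + \left(\R^*(\xi_n^\d) - \R^*(\xi_N^\d) - \langle \xi_n^\d - \xi_N^\d, x^\dag\rangle\right)
\end{align*}
which, via telescoping $\xi_n^\d - \xi_N^\d = \sum_{m=N}^{n-1}(\xi_{m+1}^\d - \xi_m^\d)$ and the Lipschitz bound (\ref{rbdgm.41}) on $\nabla\R^*$, reduces to controlling $D_\R^{\xi_N^\d}(x^\dag,x_N^\d)$ plus an accumulated error. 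Actually the cleanest implementation is to invoke Lemma \ref{rdbgm.lem11} directly: once $\d$ is small enough that the step-sizes actually satisfy the upper bound $t_n^\d \le \min\{\mu_0\|\cdot\|^2/\|\cdot\|^2, \mu_1\}$ required there (this holds for all three rules with appropriate $\mu_0,\mu_1$ and $c_0 = 1-\mu_0/(4\sigma)>0$), Lemma \ref{rdbgm.lem11} with $\hat x = x^\dag$ gives
\begin{align*}
\EE[D_\R^{\xi_{n}^\d}(x^\dag,x_{n}^\d)] \le \EE[D_\R^{\xi_N^\d}(x^\dag,x_N^\d)] + \frac{\mu_1 b}{4c_0 p}(n-N)\,\d^2
\end{align*}
for all $n\ge N$. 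Setting $n=n_\d$ and using $\d^2 n_\d\to 0$ handles the second term. For the first term, Lemma \ref{SMD.lem7} gives $\EE[\|x_N^\d - x_N\|^2]\to 0$ and $\EE[\|\xi_N^\d-\xi_N\|^2]\to 0$ as $\d\to 0$; combined with $\xi_N^\d\in\p\R(x_N^\d)$, $\xi_N\in\p\R(x_N)$, the definition of the Bregman distance, and boundedness of the relevant quantities (one needs $\EE[\|x_N^\d - x^\dag\|^2]$ or $\EE[\R(x_N^\d)]$ bounded, which follows from Lemma \ref{rdbgm.lem11} or from strong convexity via (\ref{dgm.24})), one gets $\EE[D_\R^{\xi_N^\d}(x^\dag,x_N^\d)] \to \EE[D_\R^{\xi_N}(x^\dag,x_N)] < \varepsilon$ as $\d\to 0$. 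Hence $\limsup_{\d\to0}\EE[D_\R^{\xi_{n_\d}^\d}(x^\dag,x_{n_\d}^\d)] \le \varepsilon$, and since $\varepsilon$ was arbitrary this expectation tends to $0$; the convergence $\EE[\|x_{n_\d}^\d - x^\dag\|^2]\to 0$ then follows from the strong convexity inequality (\ref{dgm.24}).

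The step I expect to be the main obstacle is verifying $\EE[D_\R^{\xi_N^\d}(x^\dag,x_N^\d)] \to \EE[D_\R^{\xi_N}(x^\dag,x_N)]$ cleanly: the Bregman distance involves $\R(x_N^\d)$, and $\R$ is only lower semicontinuous, not continuous, so one cannot pass to the limit naively. The way around this is to write $D_\R^{\xi_N^\d}(x^\dag,x_N^\d) = \R^*(\xi_N^\d) - \langle\xi_N^\d,x^\dag\rangle + \R(x^\dag)$ using the Fenchel identity (\ref{dgm.22}) — this expresses the Bregman distance to the \emph{fixed} point $x^\dag$ purely through $\R^*$ and $\xi_N^\d$, and $\R^*$ is continuous (indeed Fréchet differentiable) by Proposition \ref{dgm.prop22}. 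Then $\EE[\|\xi_N^\d - \xi_N\|^2]\to 0$ plus continuity of $\R^*$ and a uniform-integrability / dominated-convergence argument (using that $\{\xi_N^\d\}$ stays in a bounded set of $X^*$ for small $\d$, which again follows from the finitely-many-sample-paths structure in Lemma \ref{SMD.lem7}) gives the desired convergence. A secondary technical point is checking that for small $\d$ the step-size rules (s1)–(s3) genuinely satisfy the hypotheses of Lemma \ref{rdbgm.lem11} along the relevant sample paths — for (s3) this uses that $\|A_{I_n}x_n^\d - y_{I_n}^\d\| > \tau\d_{I_n}$ forces the "active" branch, exactly as in the proof of Lemma \ref{SMD.lem7} — but this is routine bookkeeping rather than a real difficulty.
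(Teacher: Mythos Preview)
Your proof is correct and follows the same architecture as the paper: invoke Lemma \ref{rdbgm.lem11} to propagate $\EE[\Delta_n^\d]$ from a fixed index $N$ to $n_\d$, use Lemma \ref{SMD.lem7} for stability at index $N$, and finish via Theorem \ref{rdbgm.thm2}. The one substantive difference is how you pass to the limit in $\EE[D_\R^{\xi_N^\d}(x^\dag,x_N^\d)]$: the paper expands $D_\R^{\xi_N^\d}(x^\dag,x_N^\d) = \R(x^\dag)-\R(x_N^\d)-\l\xi_N^\d,x^\dag-x_N^\d\r$, handles the inner product by Cauchy--Schwarz and Lemma \ref{SMD.lem7}, and then applies Fatou plus lower semicontinuity of $\R$ to obtain only the inequality $\limsup_{\d\to0}\EE[\Delta_N^\d]\le\EE[\Delta_N]$ (which is all that is needed). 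Your Fenchel rewrite $D_\R^{\xi_N^\d}(x^\dag,x_N^\d)=\R(x^\dag)+\R^*(\xi_N^\d)-\l\xi_N^\d,x^\dag\r$ is a slicker alternative: it replaces the merely l.s.c.\ function $\R$ by the continuous function $\R^*$ (Proposition \ref{dgm.prop22}), and since for fixed $N$ there are only finitely many sample paths, pathwise convergence $\xi_N^\d\to\xi_N$ from Lemma \ref{SMD.lem7} immediately gives convergence of the expectation without any Fatou argument.
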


\begin{proof}
Let $\Delta_n^\d:= D_\R^{\xi_n^\d}(x^\dag, x_n^\d)$ for all $n \ge 0$. By the strong convexity of $\R$
it suffices to show $\EE[\Delta_{n_\d}^\d]\to 0$ as $\d \to 0$. According to Lemma \ref{rdbgm.lem11} we have
$$
\EE\left[\Delta_{n+1}^\d\right] - \EE\left[\Delta_n^\d\right] \le \frac{\mu_1 b}{4 c_0 p} \d^2
$$
for all $n \ge 0$. Let $n$ be any fixed integer. Since $n_\d\to \infty$ as $\d \to 0$, we have $n_\d>n$ for small $\d>0$.
Thus, we may repeatedly use the above inequality to obtain
\begin{align*}
\EE\left[\Delta_{n_\d}^\d\right] \le \EE\left[\Delta_n^\d\right] + \frac{\mu_1 b}{4 c_0 p} (n_\d-n) \d^2.
\end{align*}
Since $\d^2 n_\d\to 0$ as $\d \to 0$, we thus have
\begin{align*}
& \limsup_{\d\to 0} \EE\left[\Delta_{n_\d}^\d\right] \\
& \le \limsup_{\d\to 0} \EE\left[\Delta_n^\d\right]
= \limsup_{\d\to 0} \left(\R(x^\dag) - \EE\left[\R(x_n^\d)\right] - \EE\left[\l \xi_n^\d, x^\dag-x_n^\d\r\right]\right)
\end{align*}
for any $n\ge 0$. With the help of Cauchy-Schwarz inequality we have
\begin{align*}
 &\left|\EE\left[\l \xi_n^\d, x^\dag-x_n^\d\r\right] - \EE\left[\l \xi_n, x^\dag - x_n\r \right]\right|  \\
& \qquad \qquad \qquad \le \left|\EE\left[\l \xi_n^\d-\xi_n, x^\dag-x_n^\d\r\right]\right|
+ \left|\EE\left[\l \xi_n, x_n-x_n^\d\r\right]\right| \\
& \qquad \qquad\qquad\le \left(\EE\left[\|\xi_n^\d-\xi_n\|^2\right]\right)^{1/2} \left(\EE\left[\|x_n^\d-x^\dag\|^2\right]\right)^{1/2} \\
& \qquad \qquad\qquad\quad \, + \left(\EE[\|\xi_n\|^2]\right)^{1/2} \left(\EE\left[\|x_n-x_n^\d\|^2\right]\right)^{1/2}.
\end{align*}
Thus, we may use Lemma \ref{SMD.lem7} to conclude
\begin{align*}
\lim_{\d\to 0} \EE\left[\l \xi_n^\d, x^\dag-x_n^\d\r\right]
= \EE\left[\l \xi_n, x^\dag - x_n\r \right].
\end{align*}
According to the proof of Lemma \ref{SMD.lem7} we have $\|x_n^\d - x_n\|\to 0$ as $\d \to 0$ along each sample path $(I_0, \cdots, I_{n-1})$. Thus, from the lower
semi-continuity of $\R$ and Fatou's lemma it follows
$$
\EE[\R(x_n)] \le \EE\left[\liminf_{\d\to 0} \R(x_n^\d)\right] \le \liminf_{\d\to 0} \EE\left[\R(x_n^\d)\right].
$$
Consequently
\begin{align*}
\limsup_{\d \to 0} \EE\left[\Delta_{n_\d}^\d\right]
& \le \R(x^\dag) -\liminf_{\d\to 0} \EE\left[\R(x_n^\d)\right]-\lim_{\d \to 0} \EE\left[\l \xi_n^\d, x^\dag-x_n^\d\r\right] \\
& \le \R(x^\dag) - \EE[\R(x_n)] - \EE\left[\l \xi_n, x^\dag - x_n\r \right] \\
& = \EE[D_\R^{\xi_n}(x^\dag, x_n)]
\end{align*}
for all $n\ge 0$. Letting $n\to \infty$ and using Theorem \ref{rdbgm.thm2} we therefore obtain $\EE[\Delta_{n_\d}^\d] \to 0$
as $\d \to 0$.
\end{proof}

Theorem \ref{SMD.thm2} provides convergence results on Algorithm \ref{alg:SMD} under various choices of the step-sizes. For the step-size chosen by (s3), if further conditions are imposed and $\tau$ and $\mu_0$, we can show that the iteration error of Algorithm \ref{alg:SMD} always decreases. 

\begin{lemma}\label{SMD.lem6}
Let Assumption \ref{dgm.ass1} hold. Consider Algorithm \ref{alg:SMD} with the step-size chosen by (s3). If $\mu_0>0$ and $\tau>1$ are chosen such that
\begin{align}\label{smd.mutau}
1-\frac{1}{\tau}-\frac{\mu_0}{4\sigma}\ge 0,
\end{align}
then $\Delta_{n+1}^\d\le \Delta_n^\d$ and consequently 
$
\EE\left[\Delta_{n+1}^\d\right] \le \EE\left[\Delta_n^\d\right]
$
for all $n \ge 0$, where $\Delta_n^\d := D_\R^{\xi_n^\d}(x^\dag, x_n^\d)$.
\end{lemma}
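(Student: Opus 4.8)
The statement is about Algorithm \ref{alg:SMD} with the step-size rule (s3), which uses the discrepancy-type threshold $\|A_{I_n}x_n^\d - y_{I_n}^\d\|>\tau\d_{I_n}$. I would follow the computation in the proof of Lemma \ref{rdbgm.lem11}, tracking the single-step change $\Delta_{n+1}^\d - \Delta_n^\d$ along a fixed sample path, but now exploiting the fact that (s3) forces a nonzero step only when the discrepancy is above threshold. The key inequality to re-derive, valid as long as $\hat x = x^\dag$ is a solution, is
$$
\Delta_{n+1}^\d - \Delta_n^\d \le \frac{1}{4\sigma}(t_n^\d)^2 \|A_{I_n}^*(A_{I_n}x_n^\d - y_{I_n}^\d)\|^2 - t_n^\d \l A_{I_n}x_n^\d - y_{I_n}^\d, A_{I_n}x_n^\d - y_{I_n}\r.
$$
Then I would split into two cases according to whether the step-size in (s3) is zero or not.

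\textbf{Case 1:} If $\|A_{I_n}x_n^\d - y_{I_n}^\d\|\le \tau\d_{I_n}$, then $t_n^\d=0$, so $\xi_{n+1}^\d=\xi_n^\d$, $x_{n+1}^\d=x_n^\d$, and $\Delta_{n+1}^\d = \Delta_n^\d$; nothing to prove. \textbf{Case 2:} If $\|A_{I_n}x_n^\d - y_{I_n}^\d\|>\tau\d_{I_n}$, then $t_n^\d = \min\{\mu_0\|A_{I_n}x_n^\d - y_{I_n}^\d\|^2/\|A_{I_n}^*(A_{I_n}x_n^\d - y_{I_n}^\d)\|^2, \tilde\mu_1\}$. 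The first term in the bound is controlled by $\frac{\mu_0}{4\sigma} t_n^\d \|A_{I_n}x_n^\d - y_{I_n}^\d\|^2$ exactly as in Lemma \ref{rdbgm.lem11}. For the second term, I write $A_{I_n}x_n^\d - y_{I_n} = (A_{I_n}x_n^\d - y_{I_n}^\d) + (y_{I_n}^\d - y_{I_n})$ and use Cauchy--Schwarz plus $\|y_{I_n}^\d - y_{I_n}\|\le \d_{I_n}$ to get
$$
-t_n^\d\l A_{I_n}x_n^\d - y_{I_n}^\d, A_{I_n}x_n^\d - y_{I_n}\r \le -t_n^\d\|A_{I_n}x_n^\d - y_{I_n}^\d\|^2 + t_n^\d\d_{I_n}\|A_{I_n}x_n^\d - y_{I_n}^\d\|.
$$
Combining, the per-step change is at most $t_n^\d\|A_{I_n}x_n^\d - y_{I_n}^\d\|\left(-(1-\tfrac{\mu_0}{4\sigma})\|A_{I_n}x_n^\d - y_{I_n}^\d\| + \d_{I_n}\right)$, and since in this case $\d_{I_n} < \|A_{I_n}x_n^\d - y_{I_n}^\d\|/\tau$, the bracketed quantity is at most $-(1-\tfrac{1}{\tau}-\tfrac{\mu_0}{4\sigma})\|A_{I_n}x_n^\d - y_{I_n}^\d\|\le 0$ by the hypothesis (\ref{smd.mutau}). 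Hence $\Delta_{n+1}^\d \le \Delta_n^\d$ along the sample path, and taking expectations gives $\EE[\Delta_{n+1}^\d]\le\EE[\Delta_n^\d]$.

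\textbf{The main obstacle.} There is essentially no hard step here: the lemma is a clean refinement of the Lemma \ref{rdbgm.lem11} computation, and the only subtlety is bookkeeping — making sure the step-size formula is correctly substituted in each case and that the factor $\|A_{I_n}x_n^\d - y_{I_n}^\d\|$ can be pulled out to expose the sign-determining constant $1-\tfrac1\tau-\tfrac{\mu_0}{4\sigma}$. One should also note that $A_{I_n}^*(A_{I_n}x_n^\d - y_{I_n}^\d)\ne 0$ whenever $\|A_{I_n}x_n^\d - y_{I_n}^\d\|>\tau\d_{I_n}\ge 0$ is used only to keep the first branch of the $\min$ well-defined; if it vanishes then $t_n^\d=\tilde\mu_1$ and $\xi_{n+1}^\d=\xi_n^\d$ anyway, so the conclusion is trivial. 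Everything else is a direct consequence of strong convexity of $\R$ through (\ref{rbdgm.41}) and the Fenchel identity (\ref{dgm.22}), exactly as already used.
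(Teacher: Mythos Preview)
Your proof is correct and follows essentially the same route as the paper: both start from the single-step bound $\Delta_{n+1}^\d - \Delta_n^\d \le \frac{1}{4\sigma}(t_n^\d)^2\|A_{I_n}^*(A_{I_n}x_n^\d-y_{I_n}^\d)\|^2 - t_n^\d\l A_{I_n}x_n^\d-y_{I_n}^\d, A_{I_n}x_n^\d-y_{I_n}\r$, use the step-size definition to get $t_n^\d\|A_{I_n}^*(\cdot)\|^2\le\mu_0\|A_{I_n}x_n^\d-y_{I_n}^\d\|^2$ and $\d_{I_n}\le\frac{1}{\tau}\|A_{I_n}x_n^\d-y_{I_n}^\d\|$ (when $t_n^\d\ne 0$), and conclude $\Delta_{n+1}^\d-\Delta_n^\d\le -\big(1-\frac{1}{\tau}-\frac{\mu_0}{4\sigma}\big)t_n^\d\|A_{I_n}x_n^\d-y_{I_n}^\d\|^2\le 0$. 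The only difference is cosmetic: you make the case split $t_n^\d=0$ versus $t_n^\d>0$ explicit, whereas the paper absorbs the trivial case into the inequalities (which hold automatically when $t_n^\d=0$).
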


\begin{proof}
According to the proof of Lemma \ref{rdbgm.lem11}, we have 
\begin{align*}
\Delta_{n+1}^\d - \Delta_n^\d 
& \le \frac{1}{4\sigma} \left(t_n^\d\right)^2 \|A_{I_n}^*(A_{I_n} x_n^\d - y_{I_n}^\d)\|^2 - t_n^\d \l A_{I_n} x_n^\d - y_{I_n}^\d, A_{I_n} x_n^\d - y_{I_n}\r \\
& \le \frac{1}{4\sigma} \left(t_n^\d\right)^2 \|A_{I_n}^*(A_{I_n} x_n^\d - y_{I_n}^\d)\|^2 - t_n^\d \| A_{I_n} x_n^\d - y_{I_n}^\d\|^2 \\
& \quad\,+ \d_{I_n} t_n^\d \|A_{I_n} x_n^\d - y_{I_n}^\d\|.  
\end{align*}
By the definition of $t_n^\d$ we have 
\begin{align*}
&t_n^\d \|A_{I_n}^* (A_{I_n} x_n^\d - y_{I_n}^\d)\|^2 \le \mu_0 \|A_{I_n} x_n^\d - y_{I_n}^\d\|^2,\\
& \d_{I_n} t_n^\d \|A_{I_n} x_n^\d - y_{I_n}^\d\| \le \frac{1}{\tau} t_n^\d \|A_{I_n} x_n^\d - y_{I_n}^\d\|^2. 
\end{align*}
Therefore 
\begin{align*}
\Delta_{n+1}^\d - \Delta_n^\d 
\le - \left(1-\frac{1}{\tau} - \frac{\mu_0}{4\sigma}\right) t_n^\d \|A_{I_n} x_n^\d - y_{I_n}^\d\|^2. 
\end{align*}
Since $\mu_0$ and $\tau$ satisfy (\ref{smd.mutau}), this implies $\Delta_{n+1}^\d\le \Delta_n^\d$. By taking the expectation we then obtain $\EE[\Delta_{n+1}^\d] \le \EE[\Delta_n^\d]$. 
\end{proof}

\begin{theorem}\label{SMD.thm3}
Let Assumption \ref{dgm.ass1} hold. Consider Algorithm \ref{alg:SMD} with step-size chosen by (s3), where $\tau>1$ and $\mu_0>0$ are chosen to satisfy (\ref{smd.mutau}). If the integer $n_\d$ is chosen such that $n_\d \to \infty$ as $\d \to 0$, then
$$
\EE\left[\|x_{n_\d}^\d - x^\dag\|^2\right] \to 0 \quad \mbox{ and } \quad
\EE\left[D_\R^{\xi_{n_\d}^\d} (x^\dag, x_{n_\d}^\d)\right] \to 0
$$
as $\d \to 0$.
\end{theorem}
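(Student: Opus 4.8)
The statement of Theorem \ref{SMD.thm3} differs from Theorem \ref{SMD.thm2} only in that the stopping index $n_\d$ is now required to satisfy just $n_\d\to\infty$, with no control on $\d^2 n_\d$. The reason this weaker requirement suffices under rule (s3) is Lemma \ref{SMD.lem6}: with $\tau>1$ and $\mu_0$ obeying \eqref{smd.mutau}, the error $\Delta_n^\d = D_\R^{\xi_n^\d}(x^\dag,x_n^\d)$ is \emph{monotonically nonincreasing} in $n$ (not merely controlled up to a $\d^2$-drift as in Lemma \ref{rdbgm.lem11}). So the plan is to mimic the proof of Theorem \ref{SMD.thm2}, but replace the step where the accumulated error $\tfrac{\mu_1 b}{4c_0 p}(n_\d - n)\d^2$ is absorbed using $\d^2 n_\d\to 0$ by an appeal to this monotonicity.

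Concretely: fix an arbitrary integer $n\ge 0$. Since $n_\d\to\infty$, we have $n_\d > n$ for all sufficiently small $\d$, and by Lemma \ref{SMD.lem6},
\begin{align*}
\EE[\Delta_{n_\d}^\d] \le \EE[\Delta_n^\d].
\end{align*}
Now take $\limsup_{\d\to 0}$. The right-hand side is handled exactly as in the proof of Theorem \ref{SMD.thm2}: write $\EE[\Delta_n^\d] = \R(x^\dag) - \EE[\R(x_n^\d)] - \EE[\langle \xi_n^\d, x^\dag - x_n^\d\rangle]$, use Lemma \ref{SMD.lem7} (applicable since (s3) is among the covered rules) together with Cauchy--Schwarz to get $\lim_{\d\to 0}\EE[\langle\xi_n^\d,x^\dag-x_n^\d\rangle] = \EE[\langle\xi_n,x^\dag-x_n\rangle]$, and use the lower semicontinuity of $\R$, the sample-path convergence $x_n^\d\to x_n$ from the proof of Lemma \ref{SMD.lem7}, and Fatou's lemma to get $\EE[\R(x_n)] \le \liminf_{\d\to 0}\EE[\R(x_n^\d)]$. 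This yields
\begin{align*}
\limsup_{\d\to 0}\EE[\Delta_{n_\d}^\d] \le \R(x^\dag) - \EE[\R(x_n)] - \EE[\langle\xi_n,x^\dag-x_n\rangle] = \EE\big[D_\R^{\xi_n}(x^\dag,x_n)\big]
\end{align*}
for every $n\ge 0$. Letting $n\to\infty$ and invoking Theorem \ref{rdbgm.thm2} (whose hypotheses $\{x_n,\xi_n\}$ satisfies, since the exact-data version of (s3) satisfies \eqref{smd.tn}) gives $\EE[\Delta_{n_\d}^\d]\to 0$. The strong convexity of $\R$, via \eqref{dgm.24}, then upgrades this to $\EE[\|x_{n_\d}^\d - x^\dag\|^2]\to 0$, completing the proof.

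I do not expect any genuine obstacle here — the proof is essentially a corollary of the machinery already developed. The only point requiring a moment's care is verifying that the monotonicity of $\EE[\Delta_n^\d]$ from Lemma \ref{SMD.lem6} is exactly what replaces the role played by the hypothesis $\d^2 n_\d\to 0$ in Theorem \ref{SMD.thm2}; once that substitution is made, every other ingredient (Lemma \ref{SMD.lem7}, the Cauchy--Schwarz splitting, Fatou, and Theorem \ref{rdbgm.thm2}) carries over verbatim.
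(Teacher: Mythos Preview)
Your proposal is correct and follows essentially the same approach as the paper's own proof: fix $n$, use the monotonicity from Lemma \ref{SMD.lem6} to get $\EE[\Delta_{n_\d}^\d]\le\EE[\Delta_n^\d]$, pass to the limit via Lemma \ref{SMD.lem7} and the argument of Theorem \ref{SMD.thm2} to reach $\limsup_{\d\to 0}\EE[\Delta_{n_\d}^\d]\le\EE[\Delta_n]$, and then let $n\to\infty$ using Theorem \ref{rdbgm.thm2}. The only difference is that you spell out the Fatou/Cauchy--Schwarz details that the paper abbreviates by referring back to the proof of Theorem \ref{SMD.thm2}.
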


\begin{proof}
Let $n\ge 0$ be any fixed integer. Since $n_\d \to \infty$ as $\d \to 0$, we have $n_\d>n$ for sufficiently small $\d>0$. 
It then follows from Lemma \ref{SMD.lem6} that 
$$
\EE\left[\Delta_{n_\d}^\d\right] \le \EE\left[\Delta_n^\d\right]
$$
By using Lemma \ref{SMD.lem7} and following the proof of Theorem \ref{SMD.thm2} we can further conclude 
$$
\limsup_{\d\to 0} \EE\left[\Delta_{n_\d}^\d\right] \le \EE[\Delta_n]
$$
for all integers $n\ge 0$. Finally, by letting $n \to \infty$, we can use Theorem \ref{rdbgm.thm2} to obtain $\EE[\Delta_{n_\d}^\d] \to 0$ as $\d \to 0$. 
\end{proof}

We remark that, unlike Theorem \ref{SMD.thm2}, the convergence result given in Theorem \ref{SMD.thm3} requires only $n_\d$ to satisfy $n_\d\to \infty $ as $\d \to 0$. This is not surprising because the discrepancy principle has been incorporated into the choice of the step-size by (s3). Theorem \ref{SMD.thm3} can be viewed as a stochastic extension of the corresponding result for the deterministic Landweber-Kaczmarz method (\cite{Jin2016}) for solving ill-posed problems. 
It should be pointed out that, due to (\ref{smd.mutau}), the application of Theorem \ref{SMD.thm3} requires either $\tau$ to be sufficiently large or $\mu_0$ to be sufficiently small which may lead the corresponding algorithm to lose accuracy or converge slowly. However,  our numerical simulations in Section \ref{sect5} demonstrate that using step-sizes chosen by (s3) without satisfying (\ref{smd.mutau}) still has the effect of decreasing errors as the iteration proceeds.

\section{\bf Rate of convergence}\label{sect4} 
\setcounter{equation}{0}

In Section \ref{sect3} we have proved the convergence of Algorithm \ref{alg:SMD} under various choices of the step-sizes. In this section we will focus on deriving convergence rates. For ill-posed problems, this requires the sought solution to satisfy suitable source conditions. We will consider the benchmark source condition of the form (\cite{BO2004})
\begin{align}\label{source}
A^* \la^\dag \in \p \R(x^\dag)
\end{align}
for some $\la^\dag := (\la_{,1}^\dag, \cdots, \la_{, p}^\dag) \in Y:=Y_1\times \cdots \times Y_p$, where, here and below, for any $\la \in Y$ we use $\la_{,i}$ to denote its $i$-th component in $Y_i$.  
Deriving convergence rates under general choices of step-sizes can be very challenging. Therefore, in this section we will restrict ourselves to the step-sizes chosen by (s1) that includes the constant step-sizes. The corresponding algorithm can be reformulated as follows. 

\begin{algorithm}\label{alg:SMD2}
Fix a batch size $b$, pick the initial guess $\xi_0 =0$ in $X^*$ and step-sizes $t_I$ for all $I \subset \{1, \cdots, p\}$ with $|I|=b$. Set $\xi_0^\d := \xi_0$. For $n\ge 0$ do the following:

\begin{enumerate} [leftmargin = 0.8cm]
\item[\emph{(i)}] Calculate $x_n^\d \in X$ by solving (\ref{smd.35});

\item[\emph{(ii)}] Randomly select a subset $I_n\subset \{1, \cdots, p\}$ with $|I_n|=b$ via the uniform distribution;

\item[\emph{(iii)}] Define $\xi_{n+1}^\d \in X^*$ by (\ref{smd.25}). 
\end{enumerate}
\end{algorithm}

For Algorithm \ref{alg:SMD2} we have the following convergence rate result. 

\begin{theorem}\label{SMD.thm11}
Let Assumption \ref{dgm.ass1} hold and consider Algorithm \ref{alg:SMD2}. Assume that $0<t_I < 4 \sigma/\|A_I\|^2$ for all subsets $I \subset \{1, \cdots, p\}$ with $|I|=b$; furthermore $t_I = t$ with a constant $t$ for all such $I$ in case $b>1$. If the sought solution $x^\dag$ satisfies the source condition (\ref{source}) and the integer $n_\d$ is chosen such that $1+ \frac{b}{p} n_\d$ is of the magnitude of $\d^{-1}$, then
$$
{\mathbb E}\left[\|x_{n_\d}^\d-x^\dag\|^2\right] \le C \d,
$$
where $C$ is a positive constant independent of $\d$, $p$ and $b$.
\end{theorem}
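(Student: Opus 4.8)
The plan is to exploit the equivalence, mentioned in the introduction, between the stochastic mirror descent method with step-sizes of type (s1) and a randomized block coordinate gradient descent method applied to the dual problem. Concretely, under the source condition (\ref{source}), the $\R$-minimizing solution $x^\dag$ is characterized as $x^\dag = \nabla\R^*(A^*\lambda^\dag)$, and one sets up the dual objective
$$
\Psi(\lambda) := \R^*(A^*\lambda) - \langle \lambda, y\rangle, \qquad \lambda \in Y,
$$
which is convex, continuously differentiable (by Proposition \ref{dgm.prop22}) with $\nabla\Psi(\lambda) = A\nabla\R^*(A^*\lambda) - y$, and whose minimizer is $\lambda^\dag$. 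The key observation is that if $\xi_n^\d = A^*\lambda_n^\d$ for some $\lambda_n^\d \in Y$ (which holds with $\lambda_0^\d = 0$ and is preserved by the iteration since the update adds a term in the range of $A_{I_n}^*$), then the update (\ref{smd.25}) reads $\lambda_{n+1,i}^\d = \lambda_{n,i}^\d - t_n^\d(A_i x_n^\d - y_i^\d)$ for $i \in I_n$ and $\lambda_{n+1,i}^\d = \lambda_{n,i}^\d$ otherwise, i.e. exactly a randomized block coordinate step on the perturbed dual objective $\Psi^\d(\lambda) := \R^*(A^*\lambda) - \langle\lambda, y^\d\rangle$. Since $\nabla\R^*$ is $(1/2\sigma)$-Lipschitz, the block gradient of $\Psi^\d$ in the $I$-block is $\|A_I\|^2/(2\sigma)$-Lipschitz, so the condition $t_I < 4\sigma/\|A_I\|^2$ is precisely the descent condition $t_I < 2/L_I$ needed for block coordinate descent convergence estimates.

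Next I would run the standard randomized block coordinate descent analysis on $\Psi^\d$. Starting from the block descent lemma $\Psi^\d(\lambda_{n+1}^\d) \le \Psi^\d(\lambda_n^\d) - (t_n^\d/2)(1 - t_n^\d L_{I_n}/2)\|A_{I_n}x_n^\d - y_{I_n}^\d\|^2$ along each realization, I would take conditional expectation over $I_n$ given $\mathcal{F}_n$, producing a decrease proportional to $\frac{1}{\binom{p}{b}}\sum_{|I|=b}\|A_I x_n^\d - y_I^\d\|^2$, which in the exact-data case equals $\frac{b}{p}\|Ax_n - y\|^2$ up to combinatorial bookkeeping as in Lemma \ref{rdbgm.lem11}. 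Combining this with the co-coercivity / convexity inequality $\Psi^\d(\lambda_n^\d) - \Psi^\d(\lambda^\dag)$ bounded in terms of $\langle \nabla\Psi^\d(\lambda_n^\d), \lambda_n^\d - \lambda^\dag\rangle$ and using $\nabla\Psi(\lambda^\dag) = 0$, one obtains a recursion of the form $\mathbb{E}[\Psi^\d(\lambda_{n+1}^\d) - \Psi^\d(\lambda^\dag)] \le (1 - c\frac{b}{p}\cdot(\text{something}))\mathbb{E}[\Psi^\d(\lambda_n^\d) - \Psi^\d(\lambda^\dag)]$ — but since $\Psi$ need not be strongly convex, the decay is only sublinear: the standard outcome is $\mathbb{E}[\Psi(\lambda_n) - \Psi(\lambda^\dag)] = O\big((1 + \frac{b}{p}n)^{-1}\big)$ for exact data, by the classical $1/n$-rate argument for (block) gradient descent on convex functions with the minimizer at finite distance $\|\lambda^\dag\|$. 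The role of the batch-size restriction "$t_I = t$ constant when $b>1$" is to make the $b$ blocks overlap-compatible so the aggregated step behaves like a single gradient step scaled by $b/p$; this is what produces the clean factor $1 + \frac{b}{p}n$.

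Then I would handle the noise. Writing $\Psi^\d - \Psi = \langle\lambda, y - y^\d\rangle$, the perturbation is linear in $\lambda$ with $\|y - y^\d\| \le \delta$, so one controls $|\Psi^\d(\lambda_n^\d) - \Psi(\lambda_n^\d)| \le \delta\|\lambda_n^\d\|$ and, more importantly, propagates a bound on $\mathbb{E}[\|\lambda_n^\d - \lambda^\dag\|^2]$ or $\mathbb{E}[\|\lambda_n^\d\|^2]$ through the iteration — each step injects an error of size $O(t\,\delta\,\|A_{I_n}x_n^\d - y_{I_n}^\d\|)$, which after $n$ steps accumulates to $O(\delta^2 n^2)$ in the worst naive bound but, using the summability of $\sum \mathbb{E}\|A_{I_n}x_n^\d - y_{I_n}^\d\|^2$ against the telescoping of $\Psi^\d$, to something like $O(\delta \cdot (1 + \frac{b}{p}n))$ in the relevant quantities. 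Finally I would convert the dual decay back to the primal error: the identity $D_\R^{\xi_n^\d}(x^\dag, x_n^\d) = \Psi^\d(\lambda^\dag) - \Psi^\d(\lambda_n^\d) + \langle\lambda_n^\d - \lambda^\dag, Ax_n^\d - y^\d\rangle + (\text{noise terms})$ — essentially a Bregman-distance/Fenchel-duality computation using $x_n^\d = \nabla\R^*(\xi_n^\d) = \nabla\R^*(A^*\lambda_n^\d)$ and $A^*\lambda^\dag \in \partial\R(x^\dag)$ — lets one bound $\mathbb{E}[D_\R^{\xi_{n_\d}^\d}(x^\dag, x_{n_\d}^\d)]$ by a combination of the dual optimality gap $O((1 + \frac{b}{p}n_\d)^{-1})$ and the accumulated noise $O(\delta^2(1 + \frac{b}{p}n_\d))$; optimizing, i.e. balancing these two terms by the choice $1 + \frac{b}{p}n_\d \sim \delta^{-1}$, gives both terms of size $O(\delta)$, and strong convexity (\ref{dgm.24}) upgrades this to $\mathbb{E}[\|x_{n_\d}^\d - x^\dag\|^2] \le C\delta$.

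The main obstacle I anticipate is the noise-propagation bookkeeping in the stochastic setting: unlike the deterministic block descent analysis, here the iterate $x_n^\d$ is $\mathcal{F}_n$-measurable while future quantities are not, so the cross terms $\langle\lambda_n^\d - \lambda^\dag, A_{I_n}x_n^\d - y_{I_n}^\d\rangle$ must be handled by conditioning carefully (as in the proof of Theorem \ref{rdbgm.thm2}) and the error recursion must be closed in expectation without losing the factor $b/p$. A secondary difficulty is ensuring the constant $C$ is genuinely independent of $p$ and $b$: this forces one to track the combinatorial factors $\binom{p-1}{b-1}/\binom{p}{b} = b/p$ precisely throughout and to use the "$t_I = t$ constant" hypothesis to prevent the heterogeneity of the $\|A_I\|$ from entering the final bound. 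The convexity-only ($1/n$) nature of the dual problem is also delicate — one must avoid any step that implicitly assumes strong convexity of $\Psi$ — but the classical gradient-descent sublinear-rate technique, adapted to the randomized block setting, should carry through.
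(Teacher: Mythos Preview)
Your proposal is correct and follows essentially the same route as the paper: reinterpret Algorithm \ref{alg:SMD2} as randomized block coordinate descent on the dual $d_{y^\d}(\lambda)=\R^*(A^*\lambda)-\langle\lambda,y^\d\rangle$, prove a descent lemma for $d_y$, track $\|\lambda_n^\d-\lambda^\dag\|^2$, combine, and balance. Two points where the paper's execution is sharper than what you sketch: (i) the primal--dual link is the exact identity $D_\R^{\xi_n^\d}(x^\dag,x_n^\d)=d_y(\lambda_n^\d)-d_y(\lambda^\dag)$ for the \emph{unperturbed} dual $d_y$ (no extra cross terms or noise residuals), which immediately gives $\sigma\,\EE[\|x_n^\d-x^\dag\|^2]\le\EE[d_y(\lambda_n^\d)-d_y(\lambda^\dag)]$; (ii) the noise enters the $r_n:=\|\lambda_n^\d-\lambda^\dag\|^2/t$ recursion as $\frac{b}{p}\delta\,(\EE[r_n])^{1/2}$ (via $\langle\lambda_n^\d-\lambda^\dag,y-y^\d\rangle$ in the convexity inequality), not via $\|A_{I_n}x_n^\d-y_{I_n}^\d\|$, and closing this self-referential inequality requires an elementary bootstrap lemma of the form ``$a_n^2\le b_n^2+c\sum_{j<n}a_j \Rightarrow a_n\le b_n+cn$'' rather than the telescoping argument you suggest.
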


When $X$ is a Hilbert space and $\R(x) = \|x\|^2/2$, the stochastic mirror descent method becomes the stochastic gradient method. In the existing literature on stochastic gradient method for solving ill-posed problems, sub-optimal convergence rates have been derived for diminishing step-sizes under general source conditions on the sought solution, see \cite{JL2019,LM2021}. Our result given in Theorem \ref{SMD.thm11} supplements these results by demonstrating that the stochastic gradient method can achieve the order optimal convergence rate $O(\d)$ under constant step-sizes if the sought solution satisfies the source condition (\ref{source}). 

The result in Theorem \ref{SMD.thm11} also demonstrates the role played by the batch size $b$: to achieve the same convergence rate, less number of iterations is required if a larger batch size $b$ is used. However, this does not mean the corresponding algorithm with larger batch size $b$ is faster because the computational time at each iteration can increase as $b$ increases. How to determine a batch size with the best performance is an outstanding challenging question. 

The proof of Theorem \ref{SMD.thm11} is based on considering an equivalent formulation of Algorithm \ref{alg:SMD2} which we will derive in the following. The derivation is based on applying the randomized 
block gradient method (\cite{LX2015,N2012,RT2014}) to the dual problem of (\ref{smd.2}) with $y$ replaced by $y^{\d}$.
The associated Lagrangian function is
$$
L(x; \la) := \R(x) -  \l \la, A x - y^\d\r,
$$
where $x \in X$ and $\la \in Y$. Thus the dual function is
\begin{align*}
\inf_{x\in X} L(x; \la)
& = -\sup_{x\in X} \left\{ \left\l A^* \la, x \right\r - \R(x)\right\} + \l \la, y^\d \r = - \R^*\left(A^* \la\right) + \l \la, y^\d\r
\end{align*}
which means the dual problem is
\begin{align}\label{rbdgm.3}
\min_{\la \in Y} \left\{d_{y^\d}(\la):=\R^*\left(A^* \la\right) - \l \la, y^\d\r\right\}.
\end{align}
Note that $\R^*$ is continuous differentiable and
$$
d_{y^\d}(\la) = \R^* \left(\sum_{i=1}^p A_i^* \la_{,i}\right) - \sum_{i=1}^p \l \la_{, i}, y_i^\d\r. 
$$
Therefore we may solve (\ref{rbdgm.3}) by the randomized block gradient method which iteratively selects partial components of $\la$ at random to be updated by a partial gradient of $d_{y^\d}$ and leave other components unchanged. To be more precisely, for any $\la \in Y$ and any subset $I \subset \{1, \cdots, p\}$, let $\la_I$ denote the group of components $\la_{,i}$ of $\la$ with $i \in I$. Assume $\la_n^\d:=(\la_{n,1}^\d, \cdots, \la_{n,p}^\d) \in Y$ is a current iterate. We then choose a subset of indices $I_n$ from $\{1, \cdots, p\}$ with $|I_n|=b$ randomly with uniform distribution and
define $\la_{n+1}^\d := (\la_{n+1,1}^\d, \cdots, \la_{n+1,p}^\d)$ by setting $\la_{n+1,i}^\d = \la_{n,i}^\d$ if $i \not\in I_n$ and
\begin{align}\label{rbdgm.4}
\la_{n+1, I_n}^\d = \la_{n,I_n}^\d - t_{I_n} \nabla_{I_n} d_{y^\d}(\la_n^\d),
\end{align}
with step-sizes $t_{I_n}>0$ depending only on $I_n$, where, for each $I\subset \{1, \cdots, p\}$, $\nabla_I d_{y^\d}$ denotes the partial gradient of $d_{y^\d}$ with respect to $\la_I$. Note that
$$
\nabla_I d_{y^\d}(\la) = A_I \nabla \R^*\left(A^* \la\right) - y_I^\d.
$$
Therefore, by setting $x_n^\d := \nabla \R^*\left(A^* \la_n^\d\right)$, we can rewrite (\ref{rbdgm.4}) as
$$
\la_{n+1,I_n}^\d = \la_{n,I_n}^\d - t_{I_n} \left(A_{I_n} x_n^\d - y_{I_n}^\d\right).
$$
By using the definition of $x_n^\d$ and (\ref{dgm.22}) we have $A^* \la_n^\d \in \p \R(x_n^\d)$ which implies
$$
0 \in \p \left(\R - \l \la_n^\d, A \cdot \r\right)(x_n^\d)
$$
and thus
\begin{align}\label{rbdgm.5}
x_n^\d = \arg\min_{x\in X} \left\{ \R(x) - \l \la_n^\d, A x \r\right\}.
\end{align}
Combining the above analysis we thus obtain the following mini-batch randomized dual block gradient method for solving (\ref{rbdgm.1}) with noisy data.

\begin{algorithm}\label{alg:RDBGM}
{\it Fix a batch size $b$, pick the initial guess $\la_0:=(0, \cdots, 0)\in Y:=Y_1\times \cdots\times Y_p$ and step-sizes
$t_I$ for all $I \subset \{1, \cdots, p\}$ with $|I|=b$. Set $\la_0^\d := \la_0$. For $n\ge 0$ do the following:

\begin{enumerate} [leftmargin = 0.8cm]
\item[\emph{(i)}] Calculate $x_n^\d \in X$ by using (\ref{rbdgm.5});

\item[\emph{(ii)}] Randomly select a subset $I_n \subset \{1, \cdots, p\}$ with $|I_n|=b$ via the uniform distribution;

\item[\emph{(iii)}] Define $\la_{n+1}^\d := (\la_{n+1,1}^\d, \cdots, \la_{n+1,p}^\d)\in Y$ by
setting $\la_{n+1,I_n^c}^\d = \la_{n,I_n^c}^\d$ and
\begin{align}\label{rbdgm.6}
\la_{n+1, I_n}^\d = \la_{n, I_n}^\d - t_{I_n} \left(A_{I_n} x_n^\d - y_{I_n}^\d\right),
\end{align}
where $I_n^c$ denotes the complement of $I_n$ in $\{1, \cdots, p\}$. 
\end{enumerate}
}
\end{algorithm}

Let us demonstrate the equivalence between Algorithm \ref{alg:SMD2} and Algorithm \ref{alg:RDBGM}. If $\{x_n^\d, \la_n^\d\}$ is defined by Algorithm \ref{alg:RDBGM}, by defining $\xi_n^\d := A^* \la_n^\d$ we have 
\begin{align*}
\xi_{n+1}^\d & = A^* \la_{n+1}^\d = A_{I_n^c}^* \la_{n+1, I_n^c}^\d + A_{I_n}^* \la_{n+1, I_n}^\d \\
& = A_{I_n^c}^* \la_{n, I_n^c}^\d + A_{I_n}^* \left(\la_{n, I_n}^\d - t_{I_n} (A_{I_n} x_n^\d - y_{I_n}^\d)\right) \\
& = \left(A_{I_n^c}^* \la_{n, I_n^c}^\d + A_{I_n}^* \la_{n, I_n}^\d\right) - t_{I_n} A_{I_n}^* (A_{I_n} x_n^\d - y_{I_n}^\d) \\
& = \xi_n^\d - t_{I_n} A_{I_n}^* (A_{I_n} x_n^\d - y_{I_n}^\d).
\end{align*}
Therefore $\{x_n^\d, \xi_n^\d\}$ can be produced by Algorithm \ref{alg:SMD}. Conversely, if $\{x_n^\d, \xi_n^\d\}$ is defined by Algorithm \ref{alg:SMD}, by using $\xi_0^\d =0$ one can easily see that $\xi_n^\d \in \mbox{Ran}(A)$, the range of $A$. By writing $\xi_n^\d = A^* \la_n^\d$ for some $\la_n^\d = (\la_{n,1}^\d, \cdots, \la_{n, p}^\d) \in Y$, we have
\begin{align*}
\xi_{n+1}^\d  & = \xi_n^\d - t_{I_n} A_{I_n}^*(A_{I_n} x_n^\d - y_{I_n}^\d)  = A^* \la_n^\d - t_{I_n} A_{I_n}^*(A_{I_n} x_n^\d - y_{I_n}^\d) \\
& = A_{I_n^c}^* \la_{n, I_n^c}^\d 
+ A_{I_n}^* \left(\la_{n, I_n}^\d - t_{I_n} (A_{I_n} x_n^\d - y_{I_n}^\d)\right)
\end{align*}
Define $\la_{n+1}^\d \in Y$ by setting $\la_{n+1, I_n^c}^\d = \la_{n, I_n^c}^\d$ and $\la_{n+1, I_n}^\d = \la_{n, I_n}^\d - t_{I_n} (A_{I_n} x_n^\d - y_{I_n}^\d)$. We can see that $\xi_{n+1}^\d = A^* \la_{n+1}^\d$ and $\{x_n^\d, \la_n^\d\}$ can be generated by Algorithm \ref{alg:RDBGM}. Therefore we obtain 

\begin{lemma}\label{lem:equiv}
Algorithm \ref{alg:SMD2} and Algorithm \ref{alg:RDBGM} are equivalent.
\end{lemma}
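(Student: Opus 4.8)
The plan is to prove the equivalence by exhibiting the correspondence $\xi_n^\d = A^* \la_n^\d$ between the subgradient iterates of Algorithm \ref{alg:SMD2} and the dual iterates of Algorithm \ref{alg:RDBGM}, and then showing that this relation is preserved inductively under the two update rules, provided both algorithms are run with the same sequence of randomly selected index sets $I_n$ and the same step-sizes $t_I$. Equivalence here means that, under this correspondence, the primal iterates $x_n^\d$ produced by the two algorithms coincide for every $n$; the proof will therefore reduce to tracking the invariant $\xi_n^\d = A^* \la_n^\d$ along the iterations.

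First I would record the observation that makes the primal updates agree: whenever $\xi_n^\d = A^* \la_n^\d$, we have $\l \la_n^\d, A x\r = \l A^* \la_n^\d, x\r = \l \xi_n^\d, x\r$ for every $x \in X$, so the minimization problem (\ref{rbdgm.5}) defining $x_n^\d$ in Algorithm \ref{alg:RDBGM} is literally the same problem as (\ref{smd.35}) defining $x_n^\d$ in Algorithm \ref{alg:SMD2}. Hence the two primal iterates agree as soon as the associated dual quantities satisfy the correspondence, and the whole argument comes down to propagating that correspondence through one step.

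Next I would carry out the induction using the block decomposition $A^* \la = A_{I_n}^* \la_{I_n} + A_{I_n^c}^* \la_{I_n^c}$. The base case is immediate, since $\la_0^\d = 0$ gives $\xi_0^\d = 0 = A^* \la_0^\d$. For the forward direction (from Algorithm \ref{alg:RDBGM} to Algorithm \ref{alg:SMD2}) I set $\xi_n^\d := A^* \la_n^\d$ and compute $A^* \la_{n+1}^\d$ via the block update (\ref{rbdgm.6}); because only the $I_n$-block of $\la$ changes, the terms recombine to yield $\xi_{n+1}^\d = \xi_n^\d - t_{I_n} A_{I_n}^*(A_{I_n} x_n^\d - y_{I_n}^\d)$, which is precisely (\ref{smd.25}). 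For the converse direction I first note that $\xi_0^\d = 0 \in \mbox{Ran}(A^*)$ and that each update of Algorithm \ref{alg:SMD2} adds an element of $\mbox{Ran}(A_{I_n}^*) \subset \mbox{Ran}(A^*)$, so inductively $\xi_n^\d \in \mbox{Ran}(A^*)$; thus there exists $\la_n^\d \in Y$ with $\xi_n^\d = A^* \la_n^\d$, and I then define $\la_{n+1}^\d$ by the block rule of Algorithm \ref{alg:RDBGM} and verify directly that $A^* \la_{n+1}^\d = \xi_{n+1}^\d$.

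The main subtlety, and essentially the only point needing care, is in the converse direction: when $A^*$ fails to be injective, the representation $\xi_n^\d = A^* \la_n^\d$ is not unique. I would sidestep this entirely by never demanding uniqueness—instead I construct one specific dual sequence $\{\la_n^\d\}$ recursively, starting from $\la_0^\d = 0$ and propagating by the block update (\ref{rbdgm.6}), and verify inductively that this particular sequence satisfies $\xi_n^\d = A^* \la_n^\d$ at every step. Since the primal iterate $x_n^\d$ depends on $\la_n^\d$ only through the combination $A^* \la_n^\d = \xi_n^\d$, the non-uniqueness is immaterial, and the recursively constructed $\{\la_n^\d\}$ furnishes a legitimate run of Algorithm \ref{alg:RDBGM} matching the given run of Algorithm \ref{alg:SMD2}. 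This completes the identification in both directions and establishes the equivalence.
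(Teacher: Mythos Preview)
Your proposal is correct and follows essentially the same approach as the paper: both establish the correspondence $\xi_n^\d = A^* \la_n^\d$ by induction, use the block decomposition $A^* \la = A_{I_n}^* \la_{I_n} + A_{I_n^c}^* \la_{I_n^c}$ to verify that the update rules match, and handle the converse direction by noting that $\xi_n^\d \in \mbox{Ran}(A^*)$ so a dual representative exists. Your treatment is slightly more explicit about why the primal minimization problems (\ref{smd.35}) and (\ref{rbdgm.5}) coincide and about the non-uniqueness of $\la_n^\d$ when $A^*$ is not injective, but these are refinements of the same argument rather than a different route.
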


Based on Algorithm \ref{alg:RDBGM}, we will derive the estimate on ${\mathbb E}[\|x_n^\d-x^\dag\|^2]$ under the benchmark source condition (\ref{source}). Let
$$
d_y(\la) := \R^*\left(A^* \la\right) - \l \la, y\r
$$
which is obtained from $d_{y^\d}(\la)$ with $y^\d$ replaced by $y$. From (\ref{source}) and (\ref{dgm.22}) it follows that $x^\dag = \nabla \R^*(A^*\la^\dag)$. Thus, by using $A x^\dag = y$ we have
\begin{align*}
\nabla d_y(\la^\dag) & = A \nabla \R^*\left(A^*\la^\dag\right) - y = A x^\dag - y = 0.
\end{align*}
Since $d_y(\la)$ is convex, this shows that $\la^\dag$ is a global minimizer of $d_y$ on $Y$.

According to (\ref{rbdgm.5}) we have $A^* \la_n^\d \in \p \R(x_n^\d)$. Thus, we may consider the Bregman distance
$$
\Delta_n^\d:= D_\R^{A^*\la_n^\d} (x^\dag, x_n^\d).
$$
By using (\ref{source}), $A^* \la_n^\d \in \p \R(x_n^\d)$, and (\ref{dgm.22}) we have
\begin{align*}
\R(x^\dag) + \R^*\left( A^*\la^\dag\right) = \left\l A^* \la^\dag, x^\dag\right\r, \quad
\R(x_n^\d) + \R^*\left(A^* \la_n^\d\right) = \left\l A^*\la_n^\d, x_n^\d\right\r.
\end{align*}
Therefore
\begin{align*}
\Delta_n^\d & = \R(x^\dag) - \R(x_n^\d) - \l A^*\la_n^\d, x^\dag - x_n^\d\r \\
& = \R^*\left( A^* \la_n^\d\right) -  \R^*\left( A^* \la^\dag\right)
-\l A^*\la_n^\d, x^\dag\r + \l A^*\la^\dag, x^\dag\r  \\
& = \left(\R^*\left( A^* \la_n^\d\right) - \l \la_n^\d, y\r \right)
- \left(\R^*(A^*\la^\dag) - \l \la^\dag, y\r\right) \\
& = d_y(\la_n^\d) - d_y(\la^\dag).
\end{align*}
Consequently, by virtue of the strong convexity of $\R$ we have
$$
\sigma \|x_n^\d -x^\dag\|^2 \le \Delta_n^\d \le d_y(\la_n^\d) - d_y(\la^\dag).
$$
Taking the expectation gives
\begin{align}\label{rbdgm.20}
\sigma \EE\left[\|x_n^\d-x^\dag\|^2\right] \le \EE\left[d_y(\la_n^\d) - d_y(\la^\dag)\right].
\end{align}
This demonstrates that we can achieve our goal by bounding $\EE[d_y(\la_n^\d)-d_y(\la^\dag)]$.

Note that the sequence $\{\la_n^\d\}$ is defined via the function $d_{y^\d}(\la)$. Although $\la^\dag$ is a minimizer of $d_y$, it may not be a minimizer of $d_{y^\d}$. In order to overcome this gap, we will make use of the
relation
\begin{align}\label{rbdgm.11}
d_{y^\d}(\la) - d_y(\la) = \l \la, y-y^{\d}\r
\end{align}
between $d_{y^\d}(\la)$ and $d_y(\la)$ for all $\la \in Y$.

\begin{lemma}\label{rbdgm.lem1}
Let Assumption \ref{dgm.ass1} hold and consider Algorithm \ref{alg:RDBGM}. Assume that $0<t_I <4\sigma/\|A_I\|^2$ for all $I \subset \{1, \dots, p\}$ with $|I|=b$. Then
$$
\EE\left[d_y(\la_{n+1}^\d)\right]
\le \EE\left[d_y(\la_n^\d)\right] + \frac{t_{\max} b}{2c_1 p} \d^2 - \frac{c_1}{2{p\choose b}} \sum_{I: |I|=b} t_I \EE\left[\|\nabla_I d_{y^\d}(\la_n^\d)\|^2\right]
$$
for all $n\ge 0$, where 
$$
t_{\max}:= \max_{I: |I|=b} t_I \quad \mbox{ and } \quad  c_1:= \min_{I: |I|=b} \left\{1- \frac{t_I\|A_I\|^2}{4\sigma}\right\}>0.
$$
\end{lemma}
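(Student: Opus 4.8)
The plan is to establish a one-step descent inequality for $d_{y^\d}$ along the randomized block gradient iteration and then convert it into an inequality for $d_y$ using the relation (\ref{rbdgm.11}). First I would fix $n$ and condition on $\mathcal{F}_n$, so that $\la_n^\d$ and $x_n^\d$ are treated as deterministic and the only randomness is the choice of $I_n$. Since $d_{y^\d}$ has the form $\R^*(A^*\la) - \l\la, y^\d\r$ and $\R^*$ has Lipschitz gradient with constant $1/(2\sigma)$ by (\ref{rbdgm.41}), the block $\la\mapsto \R^*(A^*\la)$ restricted to the coordinates in $I$ has Lipschitz gradient with constant $\|A_I\|^2/(2\sigma)$; this is the standard block-Lipschitz property. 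Applying the descent lemma to the update $\la_{n+1,I_n}^\d = \la_{n,I_n}^\d - t_{I_n}\nabla_{I_n}d_{y^\d}(\la_n^\d)$ (and noting the other blocks are unchanged, so $d_{y^\d}$ sees only the $I_n$-block change) yields
\begin{align*}
d_{y^\d}(\la_{n+1}^\d) \le d_{y^\d}(\la_n^\d) - \left(1 - \frac{t_{I_n}\|A_{I_n}\|^2}{4\sigma}\right) t_{I_n} \|\nabla_{I_n} d_{y^\d}(\la_n^\d)\|^2 \le d_{y^\d}(\la_n^\d) - c_1 t_{I_n}\|\nabla_{I_n}d_{y^\d}(\la_n^\d)\|^2,
\end{align*}
using the definition of $c_1$ and that $c_1>0$ under the step-size restriction $t_I < 4\sigma/\|A_I\|^2$.

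Next I would take the conditional expectation over $I_n$, which is uniform over the $\binom{p}{b}$ subsets of size $b$; this replaces $t_{I_n}\|\nabla_{I_n}d_{y^\d}(\la_n^\d)\|^2$ by its average $\binom{p}{b}^{-1}\sum_{I:|I|=b} t_I\|\nabla_I d_{y^\d}(\la_n^\d)\|^2$, and leaves $d_{y^\d}(\la_n^\d)$ (which is $\mathcal{F}_n$-measurable) unchanged. Taking the full expectation via (\ref{cond.1}) then gives
\begin{align*}
\EE[d_{y^\d}(\la_{n+1}^\d)] \le \EE[d_{y^\d}(\la_n^\d)] - \frac{c_1}{\binom{p}{b}}\sum_{I:|I|=b} t_I \EE[\|\nabla_I d_{y^\d}(\la_n^\d)\|^2].
\end{align*}
Now I would use (\ref{rbdgm.11}), namely $d_{y^\d}(\la) = d_y(\la) + \l\la, y - y^\d\r$, to rewrite both sides in terms of $d_y$. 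The difference $d_{y^\d}(\la_{n+1}^\d) - d_{y^\d}(\la_n^\d) = d_y(\la_{n+1}^\d) - d_y(\la_n^\d) + \l\la_{n+1}^\d - \la_n^\d, y - y^\d\r$, and the cross term $\l\la_{n+1}^\d - \la_n^\d, y - y^\d\r = -t_{I_n}\l\nabla_{I_n}d_{y^\d}(\la_n^\d), (y-y^\d)_{I_n}\r$ since only the $I_n$-block moves. This term I would bound by Cauchy–Schwarz and Young's inequality: $|t_{I_n}\l\nabla_{I_n}d_{y^\d}(\la_n^\d), (y-y^\d)_{I_n}\r| \le \frac{c_1}{2} t_{I_n}\|\nabla_{I_n}d_{y^\d}(\la_n^\d)\|^2 + \frac{t_{I_n}}{2c_1}\|(y-y^\d)_{I_n}\|^2 \le \frac{c_1}{2} t_{I_n}\|\nabla_{I_n}d_{y^\d}(\la_n^\d)\|^2 + \frac{t_{\max}}{2c_1}\d_{I_n}^2$. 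The first piece is absorbed into half of the descent term (which is why the factor becomes $c_1/2$ rather than $c_1$ in the final bound), and the second piece, after taking expectation over $I_n$, contributes $\frac{t_{\max}}{2c_1}\EE[\d_{I_n}^2] = \frac{t_{\max}}{2c_1}\cdot\frac{b}{p}\d^2$ by exactly the combinatorial averaging computation carried out in the proof of Lemma \ref{rdbgm.lem11} (using $\binom{p-1}{b-1}/\binom{p}{b} = b/p$). Combining everything yields the stated inequality.

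The main obstacle I anticipate is being careful about the block-Lipschitz constant and the sign/direction of the gradient step to make sure the coefficient $1 - t_I\|A_I\|^2/(4\sigma)$ (rather than $1 - t_I\|A_I\|^2/(2\sigma)$) comes out correctly — this is the usual factor-of-two subtlety in the descent lemma: with step-size $t$ and Lipschitz constant $L$ one gets a gain of $(t - Lt^2/2)\|\nabla\|^2 = t(1 - Lt/2)\|\nabla\|^2$, and here $L = \|A_I\|^2/(2\sigma)$ so $Lt/2 = t\|A_I\|^2/(4\sigma)$, matching the definition of $c_1$. The other point requiring attention is the bookkeeping of which terms are $\mathcal{F}_n$-measurable so that the tower property is applied correctly, and ensuring the noise term $\|(y-y^\d)_{I_n}\|^2$ is correctly identified as $\d_{I_n}^2$ via (\ref{rbdgm.2}). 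Everything else is the routine combinatorial averaging already established in Lemma \ref{rdbgm.lem11}.
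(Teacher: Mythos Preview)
Your proposal is correct and follows essentially the same route as the paper: the block-Lipschitz descent lemma for $d_{y^\d}$ with constant $\|A_I\|^2/(2\sigma)$, the identity $d_{y^\d}(\la)=d_y(\la)+\l \la, y-y^\d\r$ to pass to $d_y$, Cauchy--Schwarz plus Young to split the cross term (absorbing half the gradient term, which turns $c_1$ into $c_1/2$), and the combinatorial identity $\EE[\d_{I_n}^2]=\frac{b}{p}\d^2$. One small point of presentation: in your write-up you take the expectation of the $d_{y^\d}$ inequality \emph{before} converting to $d_y$, then revert to the pointwise relation to treat the cross term; this order does not literally work since after averaging you no longer have access to $I_n$. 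The paper (and the clean version of your argument) does the conversion $d_{y^\d}\to d_y$ and the Young-inequality bound pointwise first, and only then takes the conditional expectation --- simply reorder your steps accordingly. Also note that $\|(y-y^\d)_{I_n}\|^2\le \d_{I_n}^2$ is an inequality from (\ref{rbdgm.2}), not an identification.
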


\begin{proof}
For any $\la \in Y$ and any $I\subset \{1, \cdots, p\}$ with $|I|=b$ we may write $\la=(\la_I, \la_{I^c})$ up to a permutation. Since the partial gradient of $d_{y^\d}$ with respect to $\la_{I}$ is given by $\nabla_I d_{y^\d}(\la) = A_I\nabla \R^*(A^* \la) - y_I^{\d}$, it follows from (\ref{rbdgm.41}) that
$$
\|\nabla_I d_{y^\d}(\la_I, \la_{I^c}) - \nabla_I d_{y^\d}(\la_I + h, \la_{I^c})\| \le L_I\|h\|, \quad \forall h \in Y_I,
$$
where $L_I := \|A_I\|^2/(2\sigma)$.  Consequently, by recalling the relation between $\la_{n+1}^\d$ and $\la_n^\d$, we can obtain
\begin{align*}
d_{y^\d}(\la_{n+1}^\d)
& \le d_{y^\d}(\la_n^\d) + \l \nabla_{I_n} d_{y^\d}(\la_n^\d), \la_{n+1, I_n}^\d-\la_{n,I_n}^\d\r
+ \frac{L_{I_n}}{2} \|\la_{n+1,I_n}^\d-\la_{n,I_n}^\d\|^2\\
& = d_{y^\d}(\la_n^\d) - \left(1- \frac{L_{I_n} t_{I_n}}{2} \right) t_{I_n}  \| \nabla_{I_n} d_{y^\d}(\la_n^\d)\|^2.
\end{align*}
By using (\ref{rbdgm.11}) and the definition of $\la_{n+1}^\d$, we can see that
\begin{align*}
d_{y^\d}(\la_{n+1}^\d) - d_{y^\d}(\la_n^\d)
& = d_y(\la_{n+1}^\d) - d_y(\la_n^\d) + \l \la_{n+1,I_n}^\d - \la_{n,I_n}^\d, y_{I_n} - y_{I_n}^{\d}\r\\
& = d_y(\la_{n+1}^\d) - d_y(\la_n^\d) - t_{I_n} \l \nabla_{I_n} d_{y^\d}(\la_n^\d), y_{I_n} - y_{I_n}^{\d}\r.
\end{align*}
Combining the above two equations and using the definition of $c_1$, we thus have
\begin{align*}
& d_y(\la_{n+1}^\d) -d_y(\la_n^\d) \\
& \le t_{I_n} \l \nabla_{I_n} d_{y^\d}(\la_n^\d), y_{I_n} - y_{I_n}^{\d}\r
- \left(1- \frac{L_{I_n} t_{I_n}}{2} \right) t_{I_n}  \| \nabla_{I_n} d_{y^\d}(\la_n^\d) \|^2\\
& \le t_{I_n} \d_{I_n} \|\nabla_{I_n} d_{y^\d}(\la_n^\d)\|
- c_1 t_{I_n} \| \nabla_{I_n} d_{y^\d}(\la_n^\d) \|^2\\
& \le \frac{1}{2c_1} t_{I_n} \d_{I_n}^2 -  \frac{1}{2} c_1 t_{I_n} \| \nabla_{I_n} d_{y^\d}(\la_n^\d) \|^2.
\end{align*}
By taking the expectation, using (\ref{cond.1}), and noting that $\EE[\d_{I_n}^2] = \frac{b}{p}\d^2$, we finally obtain
\begin{align*}
\EE\left[d_y(\la_{n+1}^\d) -d_y(\la_n^\d)\right]
& \le \frac{t_{\max}}{2c_1}\EE\left[\d_{I_n}^2 \right]
- \frac{1}{2} c_1 \EE\left[\EE\left[t_{I_n} \|\nabla_{I_n} d_{y^\d}(\la_n^\d) \|^2|{\mathcal F}_n \right] \right]\\
& = \frac{t_{\max} b}{2c_1 p} \d^2 
- \frac{1}{2} c_1 \EE\left[\frac{1}{{p\choose b}}\sum_{I: |I|=b} t_I \|\nabla_I d_{y^\d}(\la_n^\d)\|^2\right]
\end{align*}
which completes the proof.
\end{proof}

\begin{lemma}\label{rbdgm.lem2}
Consider Algorithm \ref{alg:RDBGM} in which $0<t_I < 4\sigma/\|A_I\|^2$ for all $I \subset \{1, \cdots, p\}$ with $|I|=b$, furthermore $t_I = t$ with a constant $t$ for all such $I$ in case $b>1$. Assume (\ref{source}) holds and let  
$$
r_n := \left\{\begin{array}{lll}
\displaystyle{\sum_{i=1}^p \frac{1}{t_i} \|\la_{n, i}^\d - \la_{,i}^\dag\|^2} & \mbox{ if } b = 1,\\[2.8ex]
\displaystyle{\frac{1}{t} \|\la_n^\d - \la^\dag\|^2} & \mbox{ if } b>1
\end{array}\right.
$$ 
for all $n\ge 0$. Then 
\begin{align*}
\EE[r_{n+1}]
& \le \EE[r_n] + \frac{2 b}{p} \EE\left[d_y(\la^\dag)-d_y(\la_n^\d)\right]
+ \frac{2t_{\max}^{1/2} b}{p} \d \left(\EE[r_n]\right)^{1/2} \\
& \quad \, + \frac{1}{{p\choose b}} \sum_{I: |I|=b} t_I \EE\left[\|\nabla_I d_{y^\d}(\la_n^\d)\|^2\right]
\end{align*}
for all integers $n\ge 0$.
\end{lemma}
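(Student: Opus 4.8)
The plan is to track the quantity $r_n$, which measures the (weighted) squared distance of the dual iterate $\la_n^\d$ to the source element $\la^\dag$, and to expand $r_{n+1}-r_n$ using the update rule \eqref{rbdgm.6}. Since only the components indexed by $I_n$ change, we have for the case $b=1$ that $r_{n+1}-r_n = \frac{1}{t_{i_n}}\left(\|\la_{n+1,i_n}^\d-\la_{,i_n}^\dag\|^2 - \|\la_{n,i_n}^\d-\la_{,i_n}^\dag\|^2\right)$, and similarly in the case $b>1$ with the common step-size $t$ we get $r_{n+1}-r_n = \frac{1}{t}\left(\|\la_{n+1,I_n}^\d-\la_{I_n}^\dag\|^2 - \|\la_{n,I_n}^\d-\la_{I_n}^\dag\|^2\right)$. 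In either case the choice of weights is engineered precisely so that the step-size in front of the gradient term cancels cleanly; writing $\la_{n+1,I_n}^\d - \la_{n,I_n}^\d = -t_{I_n}\nabla_{I_n}d_{y^\d}(\la_n^\d)$ and expanding the square, one obtains
\begin{align*}
r_{n+1} - r_n = -2\l \nabla_{I_n} d_{y^\d}(\la_n^\d), \la_{n,I_n}^\d - \la_{I_n}^\dag\r + t_{I_n}\|\nabla_{I_n} d_{y^\d}(\la_n^\d)\|^2.
\end{align*}

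Next I would handle the cross term $-2\l \nabla_{I_n} d_{y^\d}(\la_n^\d), \la_{n,I_n}^\d - \la_{I_n}^\dag\r$. The idea is to split $\nabla_{I_n} d_{y^\d}(\la_n^\d)$ back into $\nabla_{I_n} d_y(\la_n^\d)$ plus the data-noise correction, using \eqref{rbdgm.11} componentwise, i.e. $\nabla_{I_n} d_{y^\d}(\la) = \nabla_{I_n} d_y(\la) + (y_{I_n} - y_{I_n}^\d)$. For the $d_y$ part, I would invoke convexity of $d_y$: since $\nabla d_y(\la^\dag)=0$ and $\la^\dag$ minimizes $d_y$, the partial gradient inequality gives $\l \nabla_{I_n} d_y(\la_n^\d), \la_{n,I_n}^\d - \la_{I_n}^\dag\r \ge d_y(\la_n^\d) - d_y(\tilde\la)$, where $\tilde\la$ agrees with $\la_n^\d$ outside $I_n$ and equals $\la^\dag$ on $I_n$; then one needs to relate $d_y(\tilde\la)$ to $d_y(\la^\dag)$, again by convexity or by a direct monotonicity argument, to land on a term of the form $d_y(\la^\dag) - d_y(\la_n^\d)$. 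The noise part $-2\l y_{I_n}-y_{I_n}^\d, \la_{n,I_n}^\d - \la_{I_n}^\dag\r$ is bounded by $2\d_{I_n}\|\la_{n,I_n}^\d - \la_{I_n}^\dag\|$ via Cauchy--Schwarz.

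Then I would take conditional expectation given $\mathcal F_n$ and use that $I_n$ is uniformly distributed over the $b$-subsets, so that each index appears in $I_n$ with probability $b/p$ and $\EE[\d_{I_n}^2]=\frac bp\d^2$; this turns the $d_y$-difference term into $\frac{2b}{p}\EE[d_y(\la^\dag)-d_y(\la_n^\d)]$ and the gradient term into $\frac{1}{\binom pb}\sum_{I:|I|=b} t_I\EE[\|\nabla_I d_{y^\d}(\la_n^\d)\|^2]$. For the noise cross term, after conditioning I would apply Cauchy--Schwarz in the form $\EE[\d_{I_n}\|\la_{n,I_n}^\d-\la_{I_n}^\dag\|] \le (\EE[\d_{I_n}^2])^{1/2}(\EE[\|\la_{n,I_n}^\d-\la_{I_n}^\dag\|^2])^{1/2}$, recognizing that $\EE[\|\la_{n,I_n}^\d-\la_{I_n}^\dag\|^2 \mid \mathcal F_n] = \frac bp\|\la_n^\d-\la^\dag\|^2$ in the block case (and the analogous weighted identity when $b=1$), which produces a factor proportional to $(\EE[r_n])^{1/2}$ after absorbing the $t_{\max}^{1/2}$ and $b/p$ constants. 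Taking full expectation with \eqref{cond.1} assembles the claimed inequality.

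The main obstacle I anticipate is the cross-term estimate: getting from $\l \nabla_{I_n} d_y(\la_n^\d), \la_{n,I_n}^\d-\la_{I_n}^\dag\r$ to a clean $d_y(\la_n^\d)-d_y(\la^\dag)$ requires carefully exploiting the block-separable structure of how $d_y$ responds to perturbations supported on $I_n$, and this is exactly where the hypothesis $t_I\equiv t$ for $b>1$ is needed — it ensures the weighting in $r_n$ is a genuine multiple of $\|\la_n^\d-\la^\dag\|^2$ so that partial and full gradient inequalities are compatible. The bookkeeping of which averaging identity ($\EE[\d_{I_n}^2]=\frac bp\d^2$ versus the per-component sums when $b=1$) applies in each case must also be done consistently, but that is routine once the $b=1$ and $b>1$ cases are handled in parallel.
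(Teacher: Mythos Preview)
Your expansion of $r_{n+1}-r_n$ is correct and matches the paper's starting point. The gap is in how you handle the cross term. You propose to apply convexity of $d_y$ \emph{pathwise} to the partial inner product, which yields
\[
\l \nabla_{I_n} d_y(\la_n^\d), \la_{n,I_n}^\d - \la_{I_n}^\dag\r \ge d_y(\la_n^\d) - d_y(\tilde\la),
\]
with $\tilde\la$ equal to $\la^\dag$ on $I_n$ and $\la_n^\d$ on $I_n^c$. You then hope to ``relate $d_y(\tilde\la)$ to $d_y(\la^\dag)$'', but this step fails: since $\la^\dag$ is the global minimizer of $d_y$, one only has $d_y(\tilde\la)\ge d_y(\la^\dag)$, which is the wrong inequality. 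Your remark that one should ``exploit the block-separable structure of how $d_y$ responds to perturbations supported on $I_n$'' is precisely the problem --- $d_y(\la)=\R^*\!\big(\sum_i A_i^*\la_{,i}\big)-\l\la,y\r$ is \emph{not} block-separable, as the $\R^*$ term couples all components. No monotonicity or convexity argument will give $d_y(\tilde\la)\le d_y(\la^\dag)$ in general, and averaging $d_y(\tilde\la)$ over $I_n$ does not produce $d_y(\la^\dag)$ either.

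The fix, which is what the paper does, is to reverse the order: take the conditional expectation \emph{before} invoking convexity. Since $\la_n^\d$ is $\mathcal F_n$-measurable, the combinatorial identity
\[
\sum_{I:|I|=b}\l\nabla_I d_{y^\d}(\la_n^\d),\la_{n,I}^\d-\la_I^\dag\r
=\binom{p-1}{b-1}\l\nabla d_{y^\d}(\la_n^\d),\la_n^\d-\la^\dag\r
\]
converts the averaged partial inner product into $\frac{b}{p}$ times the \emph{full} gradient paired with the \emph{full} displacement $\la_n^\d-\la^\dag$. Only then do you apply convexity, now of $d_{y^\d}$, to get $\l\nabla d_{y^\d}(\la_n^\d),\la^\dag-\la_n^\d\r\le d_{y^\d}(\la^\dag)-d_{y^\d}(\la_n^\d)$; the relation \eqref{rbdgm.11} then converts this to $d_y(\la^\dag)-d_y(\la_n^\d)+\l\la_n^\d-\la^\dag,y^\d-y\r$, and the noise term is bounded by $\d\|\la_n^\d-\la^\dag\|\le t_{\max}^{1/2}\d\, r_n^{1/2}$ directly, with no need for your two-stage Cauchy--Schwarz on $\d_{I_n}$. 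Your treatment of the squared-gradient term and the role of the hypothesis $t_I\equiv t$ for $b>1$ are fine.
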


\begin{proof}
We will only prove the result for $b>1$; the result for $b=1$ can be proved similarly. By using the definition of $\la_{n+1}^\d$ and $t_{I_n} = t$ we have
\begin{align*}
r_{n+1} & = \frac{1}{t}\|\la_{n+1,I_n^c}^\d - \la_{I_n^c}^\dag\|^2 + \frac{1}{t} \|\la_{n+1, I_n}^\d - \la_{I_n}^\dag\|^2 \\
& = \frac{1}{t} \|\la_{n,I_n^c}^\d - \la_{I_n^c}^\dag\|^2 + \frac{1}{t} \|\la_{n, I_n}^\d - \la_{I_n}^\dag - t \nabla_{I_n}  d_{y^\d}(\la_n^\d) \|^2 \\
& = \frac{1}{t} \|\la_{n,I_n^c}^\d - \la_{I_n^c}^\dag\|^2 
+ \frac{1}{t} \|\la_{n,I_n}^\d - \la_{I_n}^\dag\|^2 - 2 \l \nabla_{I_n} d_{y^\d}(\la_n^\d), \la_{n,I_n}^\d - \la_{I_n}^\dag\r \\
& \quad \, + t \|\nabla_{I_n} d_{y^\d}(\la_n^\d)\|^2 \\
& = r_n - 2 \l \nabla_{I_n} d_{y^\d}(\la_n^\d), \la_{n, I_n}^\d -\la_{I_n}^\dag\r + t \|\nabla_{I_n} d_{y^\d}(\la_n^\d)\|^2.
\end{align*}
Taking the expectation and using (\ref{cond.1}), it gives
\begin{align*}
\EE[r_{n+1}]
& = \EE[r_n] - 2 \EE\left[\EE\left[\l \nabla_{I_n} d_{y^\d}(\la_n^\d), \la_{n, I_n}^\d -\la_{I_n}^\dag\r |{\mathcal F}_n\right]\right] \\
& \quad \,+ t \EE\left[\EE\left[\|\nabla_{I_n} d_{y^\d}(\la_n^\d)\|^2 |{\mathcal F}_n\right]\right] \\
& = \EE[r_n] - 2 \EE\left[\frac{1}{{p\choose b}} \sum_{I: |I|=b} \l \nabla_I d_{y^\d}(\la_n^\d), \la_{n,I}^\d -\la_I^\dag\r \right] \\
& \quad\,+ t \EE\left[\frac{1}{{p\choose b}} \sum_{I: |I|=b}  \|\nabla_I d_{y^\d}(\la_n^\d)\|^2\right].
\end{align*}
Noting that 
\begin{align*}
\sum_{I:|I|=b} \l \nabla_I d_{y^\d}(\la_n^\d), \la_{n,I}^\d -\la_I^\dag\r 
& = \sum_{I: |I|=b} \sum_{i\in I} \l \nabla_i d_{y^\d}(\la_n^\d), \la_{n,i}^\d-\la_{,i}^\dag\r \\
& = \sum_{i=1}^p \sum_{I: |I|=b \mbox{ and } i\in I} \l \nabla_i d_{y^\d}(\la_n^\d), \la_{n,i}^\d-\la_{,i}^\dag\r \\
& = {p-1\choose b-1} \sum_{i=1}^p \l \nabla_i d_{y^\d}(\la_n^\d), \la_{n,i}^\d-\la_{,i}^\dag\r\\
& = {p-1\choose b-1} \l \nabla d_{y^\d}(\la_n^\d), \la_n^\d -\la^\dag\r.
\end{align*}
Therefore 
\begin{align*}
\EE[r_{n+1}] & = \EE[r_n] + \frac{2 b}{p} \EE\left[\l \nabla d_{y^\d}(\la_n^\d), \la^\dag - \la_n^\d\r\right]
+ \frac{t}{{p\choose b}} \sum_{I:|I|=b}  \EE\left[\|\nabla_I d_{y^\d}(\la_n^\d)\|^2\right].
\end{align*}
By the convexity of $d_{y^\d}(\la)$, the Cauchy-Schwarz inequality and the definition of $r_n$, we can obtain 
\begin{align*}
\l \nabla d_{y^\d}(\la_n^\d), \la^\dag - \la_n^\d \r & \le d_{y^\d}(\la^\dag) - d_{y^\d}(\la_n^\d) \\
& = d_y(\la^\dag) - d_y(\la_n^\d) + \l \la_n^\d-\la^\dag, y^\d - y\r\\
& \le d_y(\la^\dag) - d_y(\la_n^\d) + \d \| \la_n^\d - \la^\dag\| \\
& = d_y(\la^\dag) - d_y(\la_n^\d) + \sqrt{t} \d r_n^{1/2}.
\end{align*}
Therefore
\begin{align*}
\EE[r_{n+1}]  & \le \EE[r_n] + \frac{2 b}{p} \EE\left[d_y(\la^\dag)-d_y(\la_n^\d)\right]
+ \frac{2\sqrt{t} b}{p} \d\EE[r_n^{1/2}] \\
& \quad \, + \frac{t}{{p\choose b}} \sum_{I: |I|=b} \EE\left[\|\nabla_I d_{y^\d}(\la_n^\d)\|^2\right].
\end{align*}
Since $\EE[r_n^{1/2}] \le (\EE[r_n])^{1/2}$, we thus obtain the desired inequality.
\end{proof}

\begin{lemma}\label{rbdgm.lem3}
Consider Algorithm \ref{alg:RDBGM} with the step-sizes chosen as in Lemma \ref{rbdgm.lem2}, there holds
\begin{align*}
& \EE \left[d_y(\la_{n+1}^\d) -d_y(\la^\dag) + \frac{1}{2} c_1 r_{n+1}\right]
+ \frac{b c_1}{p} \sum_{k=0}^n \EE\left[d_y(\la_k^\d) -d_y(\la^\dag)\right] \\
& \le M_0 + \frac{t_{\max} b}{2 c_1p} (n+1) \d^2 
+ \frac{t_{\max}^{1/2} b c_1}{p} \d \sum_{k=0}^n \left(\EE[r_k]\right)^{1/2}
\end{align*}
for all $n \ge 0$, where $M_0:= \frac{1}{2} c_1 r_0 + \left(d_y(\la_0) - d_y(\la^\dag)\right)$.
\end{lemma}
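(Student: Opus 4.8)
The plan is to add the two one-step estimates already established, arranged so that the partial-gradient terms annihilate each other, and then telescope. Concretely, I would multiply the inequality of Lemma \ref{rbdgm.lem2} by $\frac{1}{2} c_1$ and add it to the inequality of Lemma \ref{rbdgm.lem1}. The quantity
$\frac{c_1}{2{p\choose b}}\sum_{I:|I|=b}t_I\,\EE[\|\nabla_I d_{y^\d}(\la_n^\d)\|^2]$
enters with a minus sign from Lemma \ref{rbdgm.lem1} and with a plus sign from $\frac{1}{2} c_1$ times Lemma \ref{rbdgm.lem2}, so it drops out entirely. Subtracting the constant $d_y(\la^\dag)$ from both sides and writing $a_n:=\EE[d_y(\la_n^\d)-d_y(\la^\dag)]+\frac{1}{2} c_1\EE[r_n]$, what survives is the recursion
\begin{align*}
a_{n+1}\le a_n-\frac{bc_1}{p}\EE\big[d_y(\la_n^\d)-d_y(\la^\dag)\big]+\frac{t_{\max}b}{2c_1 p}\d^2+\frac{t_{\max}^{1/2}bc_1}{p}\d\,\big(\EE[r_n]\big)^{1/2}.
\end{align*}

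Second, I would apply this recursion with $n$ replaced by $k$ and sum over $k=0,\dots,n$. The $a_k$-terms telescope, so the left-hand side becomes $a_{n+1}+\frac{bc_1}{p}\sum_{k=0}^n\EE[d_y(\la_k^\d)-d_y(\la^\dag)]$, while the right-hand side becomes $a_0+\frac{t_{\max}b}{2c_1 p}(n+1)\d^2+\frac{t_{\max}^{1/2}bc_1}{p}\d\sum_{k=0}^n(\EE[r_k])^{1/2}$. Since $\la_0^\d=\la_0$ is deterministic, $a_0=d_y(\la_0)-d_y(\la^\dag)+\frac{1}{2} c_1 r_0=M_0$, and unwinding the definition of $a_{n+1}$ yields exactly the asserted inequality.

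There is no serious analytic difficulty in this argument; it is essentially bookkeeping. The one point to watch is that the factor $\frac{1}{2} c_1$ is precisely the multiplier needed for the partial-gradient terms to cancel — this works because in Lemma \ref{rbdgm.lem1} that term carries the coefficient $c_1/2$ whereas in Lemma \ref{rbdgm.lem2} it carries coefficient $1$ — and that $r_0$ and $d_y(\la_0)$ are genuinely deterministic, so $\EE[a_0]=a_0=M_0$. Note also that the scalar inequality $\EE[r_n^{1/2}]\le(\EE[r_n])^{1/2}$ has already been absorbed into Lemma \ref{rbdgm.lem2}, so nothing of that kind is needed at this stage; the sum $\sum_{k=0}^n(\EE[r_k])^{1/2}$ on the right is left untreated here and will presumably be estimated in a subsequent step, together with the $(n+1)\d^2$ term, to produce the final rate.
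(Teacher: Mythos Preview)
Your proposal is correct and follows essentially the same route as the paper's proof: combine Lemma~\ref{rbdgm.lem1} with $\tfrac{1}{2}c_1$ times Lemma~\ref{rbdgm.lem2} so that the partial-gradient terms cancel, obtain the one-step recursion for $a_n$, and telescope. The paper states this in a single line (``From Lemma~\ref{rbdgm.lem1} and Lemma~\ref{rbdgm.lem2} it follows that \dots; recursively using this inequality then completes the proof''), and your write-up simply makes the bookkeeping explicit.
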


\begin{proof}
From Lemma \ref{rbdgm.lem1} and Lemma \ref{rbdgm.lem2} it follows that
\begin{align*}
& \EE\left[d_y(\la_{n+1}^\d) -d_y(\la^\dag) + \frac{1}{2} c_1 r_{n+1}\right] \\
& \le \EE\left[d_y(\la_n^\d) - d_y(\la^\dag) + \frac{1}{2} c_1 r_n\right]
- \frac{b c_1}{p} \EE\left[d_y(\la_n^\d) -d_y(\la^\dag)\right] \\
& \quad \, + \frac{t_{\max} b}{2 c_1 p} \d^2 + \frac{t_{\max}^{1/2} bc_1}{p} \d \left(\EE[r_n]\right)^{1/2}.
\end{align*}
Recursively using this inequality then completes the proof.
\end{proof}

In order to proceed further, we need an estimate on ${\mathbb E}[r_n]$. We will use the following elementary result.

\begin{lemma}\label{rbdgm.lem4}
Let $\{a_n\}$ and $\{b_n\}$ be two sequences of nonnegative numbers such that
$$
a_n^2 \le b_n^2 + c \sum_{j=0}^{n-1} a_j, \quad n=0, 1,\cdots,
$$
where $c \ge 0$ is a constant. If $\{b_n\}$ is non-decreasing, then
$$
a_n \le b_n + c n, \quad n=0, 1, \cdots.
$$
\end{lemma}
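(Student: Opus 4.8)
The plan is to prove the equivalent statement ``$a_j \le b_j + cj$ for every $0 \le j \le n$'' by induction on $n$; concretely, this amounts to running a strong induction on $n$ for the single inequality $a_n \le b_n + cn$, since the bound on the partial sum $\sum_{j=0}^{n-1} a_j$ will require the estimate for all smaller indices at once.

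\emph{Base case.} For $n=0$ the sum $\sum_{j=0}^{-1} a_j$ is empty, so the hypothesis reduces to $a_0^2 \le b_0^2$; as $a_0, b_0 \ge 0$ this gives $a_0 \le b_0 = b_0 + c\cdot 0$.

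\emph{Induction step.} Assume $a_j \le b_j + cj$ for all $0 \le j \le n-1$. Substituting this into the hypothesis, using that $\{b_j\}$ is non-decreasing (hence $b_j \le b_n$ for every $j \le n-1$) and that $\sum_{j=0}^{n-1} j = n(n-1)/2$, I obtain
\[
a_n^2 \le b_n^2 + c \sum_{j=0}^{n-1} (b_j + c j) \le b_n^2 + c n b_n + \tfrac{1}{2} c^2 n(n-1).
\]
I would then compare the right-hand side with $(b_n + cn)^2 = b_n^2 + 2 c n b_n + c^2 n^2$: since $b_n \ge 0$ one has $c n b_n \le 2 c n b_n$, and plainly $\tfrac12 c^2 n(n-1) \le c^2 n^2$, so $a_n^2 \le (b_n + c n)^2$. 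Taking square roots, which is legitimate because both sides are nonnegative, yields $a_n \le b_n + cn$ and closes the induction.

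There is no genuine obstacle here: this is an elementary discrete Gronwall-type estimate. The only points that call for a little care are that the full induction hypothesis (all indices below $n$, not merely $n-1$) must be carried in order to control $\sum_{j=0}^{n-1} a_j$, and that the monotonicity of $\{b_n\}$ is used precisely to replace every $b_j$ appearing in that sum by $b_n$; discarding either ingredient breaks the argument.
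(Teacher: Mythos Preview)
Your proof is correct. It differs from the paper's argument in the way the induction step is handled. The paper splits into two cases according to whether $a_{k+1}$ is dominated by some earlier $a_l$: if so, the induction hypothesis applied to that single index together with monotonicity of $\{b_n\}$ finishes at once; if not, the sum is bounded crudely by $(k+1)a_{k+1}$, leading to a quadratic inequality in $a_{k+1}$ that one solves by completing the square. Your approach instead feeds the full strong induction hypothesis $a_j \le b_j + cj$ directly into the sum, uses monotonicity to replace each $b_j$ by $b_n$, and compares the resulting bound with the expansion of $(b_n+cn)^2$. Your route is arguably more direct, avoiding the case distinction entirely; the paper's route, on the other hand, makes slightly sharper use of the structure (only one $a_l$ or the single term $a_{k+1}$ is ever invoked) and would adapt more readily if one wanted a tighter constant than $cn$. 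Either way the monotonicity of $\{b_n\}$ is the crux, and you identify its role correctly.
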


\begin{proof}
We show the result by induction. The result is trivial for $n =0$. Assume that the result is valid for all
$0\le n \le k$ for some $k\ge 0$. We show it is also true for $n = k+1$. If $a_{k+1} \le \max\{a_0, \cdots, a_k\}$, then
$a_{k+1} \le a_l$ for some $0\le l\le k$. Thus, by the induction hypothesis and the monotonicity of $\{b_n\}$ we have
$$
a_{k+1} \le a_l \le b_l + c l \le b_{k+1} + c (k+1).
$$
If $a_{k+1} > \max\{a_0, \cdots, a_k\}$, then
\begin{align*}
a_{k+1}^2 \le b_{k+1}^2 + c \sum_{j=0}^k a_j \le b_{k+1}^2 + c(k+1) a_{k+1}
\end{align*}
which implies that
\begin{align*}
\left(a_{k+1} - \frac{1}{2} c (k+1)\right)^2
& = a_{k+1}^2 - c (k+1) a_{k+1} + \frac{1}{4} c^2 (k+1)^2\\
& \le b_{k+1}^2 + \frac{1}{4} c^2 (k+1)^2 \\
& \le \left(b_{k+1} + \frac{1}{2} c (k+1)\right)^2.
\end{align*}
Taking square roots shows $a_{k+1} \le b_{k+1} + c (k+1)$ again.
\end{proof}

Now we are ready to show the following result which together with Lemma \ref{lem:equiv} implies Theorem \ref{SMD.thm11} immediately.

\begin{theorem}\label{rbdgm.thm1}
Let Assumption \ref{dgm.ass1} hold and consider Algorithm \ref{alg:RDBGM} with the step-sizes chosen as in Theorem \ref{SMD.thm11}. If the sought solution $x^\dag$ satisfies the source condition (\ref{source}), then for all $n\ge 0$ there holds
\begin{align}\label{rbdgm.21}
\EE\left[d_y(\la_n^\d)-d_y(\la^\dag)\right]
\le \frac{2M_0}{1+\frac{b}{p}c_1 n} + \frac{5t_{\max} b}{2 c_1 p} n \d^2
\end{align}
and consequently
\begin{align*}
\EE\left[\|x_n^\d - x^\dag\|^2\right]
\le \frac{1}{\sigma} \left(\frac{2 M_0}{1+\frac{b}{p}c_1 n} + \frac{5 t_{\max} b}{2c_1 p} n \d^2\right),
\end{align*}
where $t_{\max}$ and $c_1$ are defined in Lemma \ref{rbdgm.lem1}. 
\end{theorem}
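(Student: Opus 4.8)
The plan is to prove the estimate \eqref{rbdgm.21} by combining Lemma \ref{rbdgm.lem3} with the a priori decrease of $\EE[d_y(\la_n^\d)-d_y(\la^\dag)]$ coming from Lemma \ref{rbdgm.lem1}, and then to bootstrap this into a bound on $\EE[r_n]$ via Lemma \ref{rbdgm.lem4}. First I would abbreviate $\e_n := \EE[d_y(\la_n^\d)-d_y(\la^\dag)] \ge 0$ and $R_n := \EE[r_n] \ge 0$, and set $\beta := \frac{b}{p}c_1$. Lemma \ref{rbdgm.lem1} (after using $d_y(\la_n^\d)-d_y(\la^\dag)\le d_y(\la_n^\d)-\min d_y$ and dropping the nonpositive gradient term) gives the rough monotone-up-to-noise bound
\begin{align*}
\e_{n+1} \le \e_n + \frac{t_{\max} b}{2c_1 p}\d^2,
\end{align*}
hence $\e_k \le \e_0 + \frac{t_{\max}b}{2c_1 p}k\d^2$ for all $k$, and more usefully $\e_k$ cannot be much smaller than later values: summing appropriately one controls $\sum_{k=0}^n \e_k$ from below by $(n+1)\e_{n+1}$ minus a noise correction of size $O(n^2\d^2)$. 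The first key step is therefore: bound the telescoped sum $\sum_{k=0}^n \e_k$ in Lemma \ref{rbdgm.lem3} from below by $(n+1)\e_{n+1} - \frac{t_{\max}b}{4c_1 p}n(n+1)\d^2$ (or similar), so that Lemma \ref{rbdgm.lem3} becomes, after discarding the nonnegative terms $\EE[d_y(\la_{n+1}^\d)-d_y(\la^\dag)]$ and $\frac{1}{2}c_1 r_{n+1}$ on the left,
\begin{align*}
\beta (n+1)\e_{n+1} \le M_0 + C_1 (n+1)^2 \d^2 + \frac{t_{\max}^{1/2}bc_1}{p}\d\sum_{k=0}^n R_k^{1/2}
\end{align*}
for an explicit constant $C_1$.

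The second key step is to estimate $\sum_{k=0}^n R_k^{1/2}$. Here I would use Lemma \ref{rbdgm.lem2}: since $d_y(\la^\dag)-d_y(\la_n^\d)\le 0$ and (again by the rough bound from Lemma \ref{rbdgm.lem1}) $\frac{1}{{p\choose b}}\sum_I t_I\EE[\|\nabla_I d_{y^\d}(\la_n^\d)\|^2] \le \frac{2}{c_1}(\e_n - \e_{n+1}) + \frac{t_{\max}b}{c_1 p}\d^2$, Lemma \ref{rbdgm.lem2} yields a recursion of the form
\begin{align*}
R_{n+1} \le R_n + \frac{2t_{\max}^{1/2}b}{p}\d R_n^{1/2} + \frac{2}{c_1}(\e_n-\e_{n+1}) + \frac{t_{\max}b}{c_1 p}\d^2.
\end{align*}
Summing from $0$ to $n-1$ and using $\sum(\e_k-\e_{k+1}) = \e_0 - \e_n \le \e_0 = M_0'$ (bounded), one gets $R_n \le R_0 + \frac{2}{c_1}M_0' + \frac{t_{\max}b}{c_1 p}n\d^2 + \frac{2t_{\max}^{1/2}b}{p}\d\sum_{k=0}^{n-1}R_k^{1/2}$. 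This is precisely the hypothesis of Lemma \ref{rbdgm.lem4} with $a_n = R_n^{1/2}$, $b_n^2 = R_0 + \frac{2}{c_1}M_0' + \frac{t_{\max}b}{c_1 p}n\d^2$ (which is nondecreasing in $n$), and $c = \frac{2t_{\max}^{1/2}b}{p}\d$; Lemma \ref{rbdgm.lem4} then gives $R_n^{1/2} \le b_n + cn$, i.e. $R_n^{1/2} \le \big(R_0 + \frac{2}{c_1}M_0' + \frac{t_{\max}b}{c_1 p}n\d^2\big)^{1/2} + \frac{2t_{\max}^{1/2}b}{p}n\d$. In particular $R_k^{1/2} = O(1) + O(\sqrt{k}\,\d) + O(k\d)$, so $\sum_{k=0}^n R_k^{1/2} = O(n) + O(n^{3/2}\d) + O(n^2\d)$.

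Finally I would substitute this sum-bound back into the displayed inequality for $\beta(n+1)\e_{n+1}$. The crucial point is the scaling regime in the theorem: $1 + \frac{b}{p}n_\d \sim \d^{-1}$, equivalently $n\d = O(p/b)$ at the chosen stopping index, so the terms $n^2\d^2$, $n^{3/2}\d\cdot\d = n^{3/2}\d^2$, and $\d\sum R_k^{1/2}$ all collapse to contributions of order $n\d^2$ (times constants depending on $t_{\max}, c_1$ and on $b/p$ in a bounded way), while the "deterministic" part produces the $\frac{2M_0}{1+\beta n}$ term. Dividing by $\beta(n+1)$ and carefully collecting constants yields \eqref{rbdgm.21}; the estimate for $\EE[\|x_n^\d-x^\dag\|^2]$ is then immediate from \eqref{rbdgm.20}, namely $\sigma\EE[\|x_n^\d-x^\dag\|^2] \le \e_n$. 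The main obstacle I anticipate is bookkeeping the constants so that the final coefficient of $n\d^2$ comes out exactly as $\frac{5t_{\max}b}{2c_1 p}$: one must be careful that the $\d\sum_k R_k^{1/2}$ feedback term — which is itself controlled only through Lemma \ref{rbdgm.lem4}, hence through a bound that already contains $n\d$ — does not blow up, and this is exactly where the hypothesis $n\d = O(p/b)$ (not merely $n\d^2\to 0$) is essential; getting a clean closed form rather than an implicit inequality is the delicate part.
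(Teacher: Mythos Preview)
Your overall architecture matches the paper's: derive a self-referential inequality for $R_n:=\EE[r_n]$ (either directly from Lemma~\ref{rbdgm.lem3} or, equivalently as you do, by combining Lemmas~\ref{rbdgm.lem1} and~\ref{rbdgm.lem2}), apply Lemma~\ref{rbdgm.lem4} to bound $R_n^{1/2}$, sum, feed back into Lemma~\ref{rbdgm.lem3}, and use the almost-monotonicity from Lemma~\ref{rbdgm.lem1} to replace $\sum_k \e_k$ by $(n+1)\e_{n+1}$ up to an $O(n^2\d^2)$ correction. Two points, however, need correcting.

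First, a small one: do not discard $\e_{n+1}$ on the left. Keeping it gives $(1+\beta(n+1))\e_{n+1}$ rather than $\beta(n+1)\e_{n+1}$, and the theorem's denominator is $1+\frac{b}{p}c_1 n$, not $\frac{b}{p}c_1 n$; you need the $1$ to cover small $n$.

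Second, and this is the real gap: the bound \eqref{rbdgm.21} is asserted \emph{for all $n\ge 0$}, with no hypothesis on the relation between $n$ and $\d$. The scaling $1+\frac{b}{p}n_\d\sim \d^{-1}$ enters only in Theorem~\ref{SMD.thm11}, to balance the two terms in the already-proved estimate \eqref{rbdgm.21}. Your claim that ``the hypothesis $n\d=O(p/b)$ is essential'' for controlling the $n^2\d^2$ and $n^{3/2}\d^2$ terms is incorrect; they are handled by pure algebra. After substituting the bound on $\sum_k R_k^{1/2}$ and using Young's inequality $\frac{b\sqrt{2t_{\max}c_1 M_0}}{p}(n+1)\d \le M_0 + \frac{b^2c_1(n+1)^2}{2p^2}t_{\max}\d^2$ on the cross term, the right-hand side takes the form
\[
2M_0 + \frac{t_{\max}b}{p}\Big(\alpha + \frac{\beta' b(n+1)}{p}\Big)(n+1)\d^2
\]
with explicit $\alpha=\frac{5}{2c_1}$ and $\beta'=\frac{1}{2}+\frac{14}{9}c_1$ (the $n^{3/2}$ term is first split by $\frac{2b^{1/2}(n+1)^{1/2}}{3p^{1/2}}\le \frac{2}{c_1}+\frac{bc_1(n+1)}{18p}$). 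The key observation is that $\beta'/\alpha<c_1$, so the quadratic-in-$n$ factor is bounded by $\alpha\big(1+\frac{bc_1(n+1)}{p}\big)$, and dividing through by $1+\frac{bc_1(n+1)}{p}$ turns the $O(n^2\d^2)$ contribution into exactly $\frac{5t_{\max}b}{2c_1 p}(n+1)\d^2$. No stopping rule is invoked anywhere in this computation.
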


\begin{proof}
According to (\ref{rbdgm.20}), it suffices to show (\ref{rbdgm.21}). Since $\la^\dag$ is a minimizer of $d_y(\la)$ over $Y$, we have $d_y(\la_n^\d) -d_y(\la^\dag) \ge 0$ for all $n \ge 0$. Thus, by noting that $\EE[r_0] = r_0 \le 2 M_0/c_1$, it follows from Lemma \ref{rbdgm.lem3} that
\begin{align*}
\EE\left[r_{n}\right]
& \le \frac{2 M_0}{c_1} + \frac{t_{\max} b }{c_1^2 p} n \d^2 + \frac{2 t_{\max}^{1/2} b}{p} \d \sum_{k=0}^{n-1} \left(\EE[r_k]\right)^{1/2}
\end{align*}
for all $n \ge 0$. Applying Lemma \ref{rbdgm.lem4} and using the inequality $\sqrt{a+b}\le \sqrt{a} +\sqrt{b}$ for any $a, b\ge 0$,
we can obtain
\begin{align*}
\left(\EE[r_n]\right)^{1/2}
\le \sqrt{\frac{2 M_0}{c_1} + \frac{t_{\max} b}{c_1^2 p} n \d^2} + \frac{2t_{\max}^{1/2} b}{p} n \d 
\le \sqrt{\frac{2M_0}{c_1}} + \left(\sqrt{\frac{b n}{c_1^2 p}} + \frac{2 b n}{p}\right) t_{\max}^{1/2}\d.
\end{align*}
Consequently
\begin{align*}
\sum_{k=0}^n \left(\EE[r_k]\right)^{1/2}
& \le \sqrt{\frac{2 M_0}{c_1}} (n+1)
+ \sum_{k=0}^n  \left(\sqrt{\frac{b k}{c_1^2 p}} + \frac{2 b k}{p}\right) t_{\max}^{1/2} \d \\
& \le \sqrt{\frac{2M_0}{c_1}} (n+1)
+ \left(\frac{2b^{1/2} (n+1)^{3/2}}{3 c_1 p^{1/2}} + \frac{b n(n+1)}{p}\right) t_{\max}^{1/2} \d.
\end{align*}
Combining this with the estimate in Lemma \ref{rbdgm.lem3} we obtain
\begin{align*}
&\EE\left[d_y(\la_{n+1}^\d) - d_y(\la^\dag)\right] +\frac{b c_1}{p} \sum_{k=0}^n \EE\left[d_y(\la_k^\d) - d_y(\la^\dag)\right] \\
& \le M_0 + \frac{t_{\max} b}{2c_1 p} (n+1) \d^2 + \frac{ b \sqrt{2 t_{\max} c_1 M_0}}{p} (n+1) \d \\
& \quad \, + \left(\frac{2 b^{3/2} (n+1)^{3/2}}{3 p^{3/2}} + \frac{b^2 c_1 (n+1)^2}{p^2}\right) t_{\max}\d^2.
\end{align*}
By using the inequality
$$
\frac{b\sqrt{2t_{\max} c_1 M_0}}{p} (n+1) \d
\le M_0 + \frac{b^2 c_1 (n+1)^2}{2 p^2} t_{\max} \d^2 
$$
we can further obtain
\begin{align}\label{rbdgm.7}
& \EE\left[d_y(\la_{n+1}^\d) - d_y(\la^\dag)\right] +\frac{b c_1}{p} \sum_{k=0}^n \EE\left[d_y(\la_k^\d) - d_y(\la^\dag)\right] \nonumber \\
& \le 2 M_0 + \frac{t_{\max} b}{p} \left(\frac{1}{2c_1} + \frac{2 b^{1/2} (n+1)^{1/2}}{3 p^{1/2}} + \frac{3 b c_1 (n+1)}{2 p}\right) (n+1) \d^2.
\end{align}
According to Lemma \ref{rbdgm.lem1} we have for any $0\le k\le n$ that
$$
\EE\left[d_y(\la_k^\d)\right] \ge \EE\left[d_y(\la_{n+1}^\d)\right] - \frac{t_{\max} b}{2c_1 p} (n+1-k) \d^2.
$$
Therefore
\begin{align*}
& \sum_{k=0}^n \EE\left[d_y(\la_k^\d)-d_y(\la^\dag)\right] \\
& \ge (n+1) \EE\left[d_y(\la_{n+1}^\d)-d_y(\la^\dag)\right] - \frac{t_{\max} b}{4 c_1 p} (n+1)(n+2) \d^2.
\end{align*}
Combining this with (\ref{rbdgm.7}) and noting that $n+2 \le 2 (n+1)$ and
$$
\frac{2 b^{1/2} (n+1)^{1/2}}{3 p^{1/2}} \le \frac{2}{c_1} + \frac{b c_1 (n+1)}{18 p},
$$
we obtain
\begin{align*}
& \left(1+ \frac{b c_1 (n+1)}{p}\right)  \EE\left[d_y(\la_{n+1}^\d)-d_y(\la^\dag)\right] \\
& \le 2 M_0 + \frac{t_{\max} b}{p} \left(\frac{1}{2c_1} + \frac{2 b^{1/2} (n+1)^{1/2}}{3 p^{1/2}} + \left(\frac{1}{2} + \frac{3c_1}{2}\right) \frac{b(n+1)}{p}\right) (n+1) \d^2 \\
& \le 2 M_0 + \frac{t_{\max} b}{p} \left(\a + \frac{\beta b(n+1)}{p}\right) (n+1) \d^2,
\end{align*}
where
$$
\a := \frac{5}{2c_1} \quad \mbox{and} \quad \beta := \frac{1}{2} + \frac{14}{9} c_1.
$$
Note that $\beta/\a<c_1$ as $c_1<1$. Dividing the both sides of the above equation by $1+b c_1(n+1)/p$ shows 
\begin{align*}
\EE\left[d_y(\la_{n+1}^\d)-d_y(\la^\dag)\right] 
& \le \frac{2 M_0}{1+\frac{b}{p}c_1 (n+1)} + \frac{5t_{\max} b}{2 c_1 p} (n+1) \d^2
\end{align*}
which completes the proof. 
\end{proof}

\section{\bf Numerical results}\label{sect5}
\setcounter{equation}{0}

In this section we will present numerical simulations to test the performance of the stochastic mirror descent method for solving linear ill-posed problems in which the sought solutions have various special features that require to make particular choices of the regularization functional $\R$ and the solution space $X$. Except Example \ref{SMD.ex2}, all the numerical simulations are performed on the linear ill-posed system 
\begin{align}\label{smd.testexample}
A_i x: = \int_a^b k(s_i, t) x(t) dt = y_i, \quad i = 1, \cdots, p
\end{align}
obtained from linear integral equations of first kind on $[a, b]$ by sampling at $s_i \in [a, b]$ with $i =1, \cdots, p$, where the kernel $k(\cdot, \cdot)$ is continuous on $[a, b]\times [a,b]$ and $s_i = a + (i-1) (b-a)/(p-1)$ for $i =1, \cdots, p$. 

\begin{example} \label{SMD.ex1}
Consider the linear system (\ref{smd.1}) where $X$ and $Y_i$ are all Hilbert spaces. In case the minimal norm solution is of interest, we may take $\R(x) = \|x\|^2/2$ in Algorithm \ref{alg:SMD}, then the definition of $x_n^\d$ shows $x_n^\d = \xi_n^\d$. Consequently, Algorithm \ref{alg:SMD} becomes the form
\begin{align}\label{SGD.1}
x_{n+1}^\d = x_n^\d - t_n^\d A_{I_n}^* (A_{I_n} x_n^\d - y_{I_n}^\d),
\end{align}
where $I_n \subset \{1, \cdots, p\}$ is a randomly selected subset with $|I_n|=b$ via the uniform distribution for a preassigned batch size $b$. This is exactly the mini-batch stochastic gradient descent method for solving linear ill-posed problems in Hilbert spaces which has been studied recently in \cite{JJL2017,JL2019,LM2021}. In these works the convergence analysis has been performed under diminishing step-sizes. Our work supplements the existing results by providing convergence results under new choices of step-sizes, in particular, for the method (\ref{SGD.1}) with batch size $b = 1$, we obtain convergence and convergence rate results for the step-size 
\begin{align}\label{SGD.s1}
t_n^\d = \frac{\mu_0}{\|A_{i_n}\|^2} \quad \mbox{ with } 0<\mu_0 <2
\end{align}
which obeys (s1) and (s2), where $i_n$ is the index chosen at the $n$-th iteration step.

We now test the performance of the method (\ref{SGD.1}) by considering the system (\ref{smd.testexample}) with $[a, b] = [-6, 6]$, $p= 1000$ and $k(s, t) := \varphi(s-t)$, where $\varphi(s) = (1+\cos(\pi s/3)) \chi_{\{|s|<3\}}$. We assume the sought solution is 
$$
x^\dag(t) = \sin(\pi t/12) + \sin(\pi t/3) + \frac{1}{200} t^2 (1-t).
$$
Instead of the exact data $y_i:=A_i x^\dag$, we use the noisy data  
\begin{align}\label{nd}
y_i^\d = y_i + \d_{rel} |y_i| \ep_i, \quad i = 1, \cdots, p, 
\end{align}
where $\d_{rel}$ is the relative noise level and $\ep_i$ are random noises obeying the standard Gaussian distribution, i.e. $\ep_i \sim N(0, 1)$. We execute the method (\ref{SGD.1}) with the batch size $b = 1$ and the initial guess $x_0^\d =0$ together with the step-sizes given by (\ref{SGD.s1}); the integrals involved in the method are approximated by the trapezoidal rule based on the partition of $[-6,6]$ into $p-1$ subintervals of equal length. 
To illustrate the dependence of convergence on the magnitude of step-size, we consider the three distinct values $\mu_0 = 1.0, 0.5$ and $0.1$. We also use noisy data with three distinct relative noise levels $\d_{rel} = 10^{-1}, 10^{-2}$ and $10^{-3}$. In Figure \ref{fig:SGD1} we plot the corresponding reconstruction errors; the first row plots the relative mean square errors $\EE[\|x_n^\d-x^\dag\|_{L^2}^2/\|x^\dag\|_{L^2}^2]$ which are calculated approximately by the average of $100$ independent runs and the second row plots $\|x_n^\d - x^\dag\|_{L^2}^2/\|x^\dag\|_{L^2}^2$ for a particular individual run. From these plots we can observe that, for each individual run, the iteration errors exhibit oscillations which are in particular dramatic when the noisy data have relative large noise levels. We can also observe that the method (\ref{SGD.1}) demonstrates the semi-convergence property, i.e. the iterates converges to the sought solution at the beginning and, after a critical number of iterations, the iterates begin to diverge. This is typical for any iterative methods for solving ill-posed problems. Furthermore, for a fixed noise level, the semi-convergence occurs earlier if a larger step-size is used. We also note that, for noisy data with relatively large noise level, if a large step-size is used, the iterates can quickly produce a reconstruction result with minimal error and then diverge quickly from the sought solution; this makes it difficult to decide how to stop the iteration to produce a good reconstruction result.  
The use of small step-sizes has the advantage of suppressing oscillations and deferring semi-convergence. However, it slows down the convergence and hence makes the method time-consuming.

\begin{figure}[htpb]
\centering
\includegraphics[width = 0.32\textwidth]{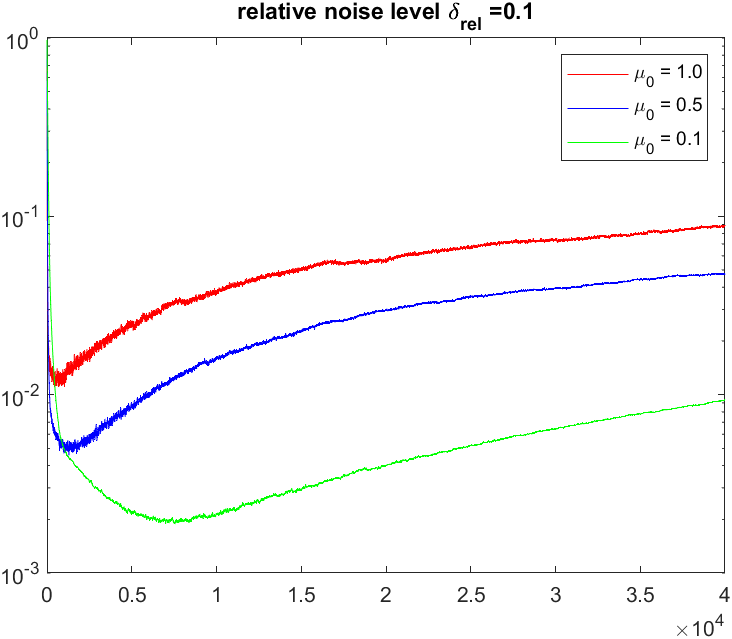}
\includegraphics[width = 0.32\textwidth]{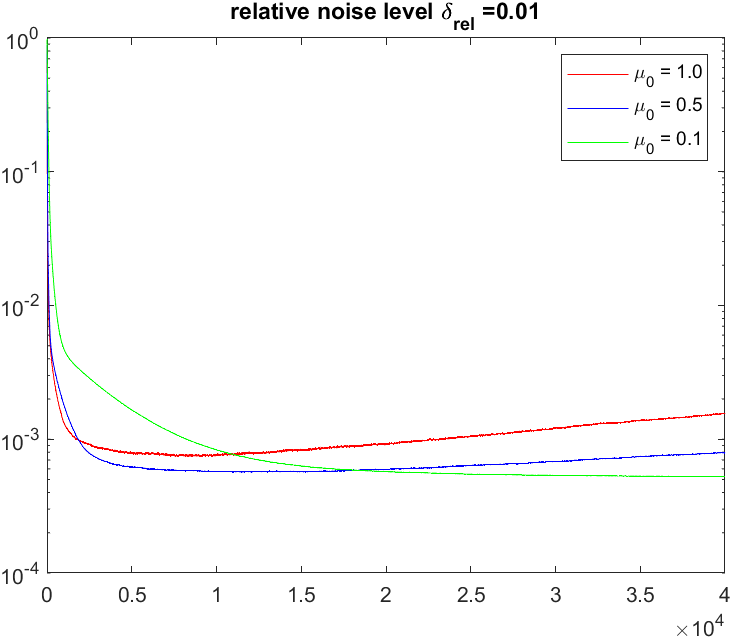}
\includegraphics[width = 0.32\textwidth]{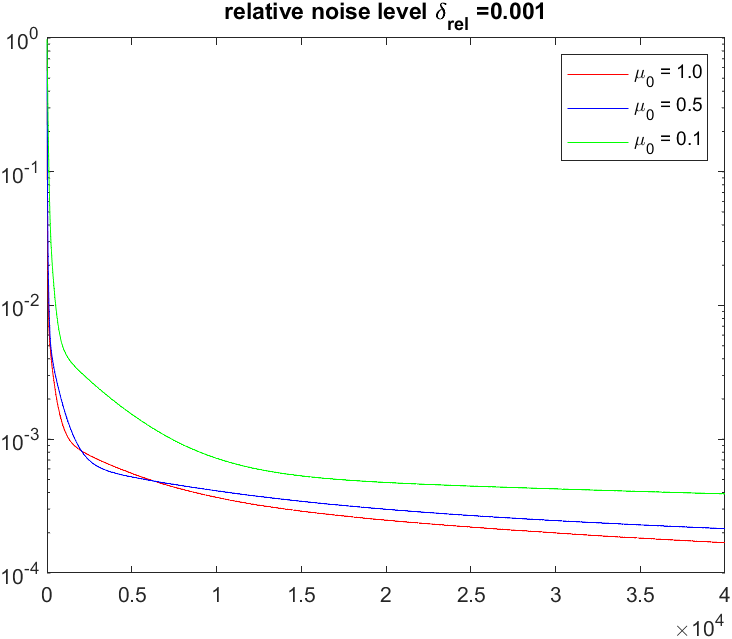}
\includegraphics[width = 0.32\textwidth]{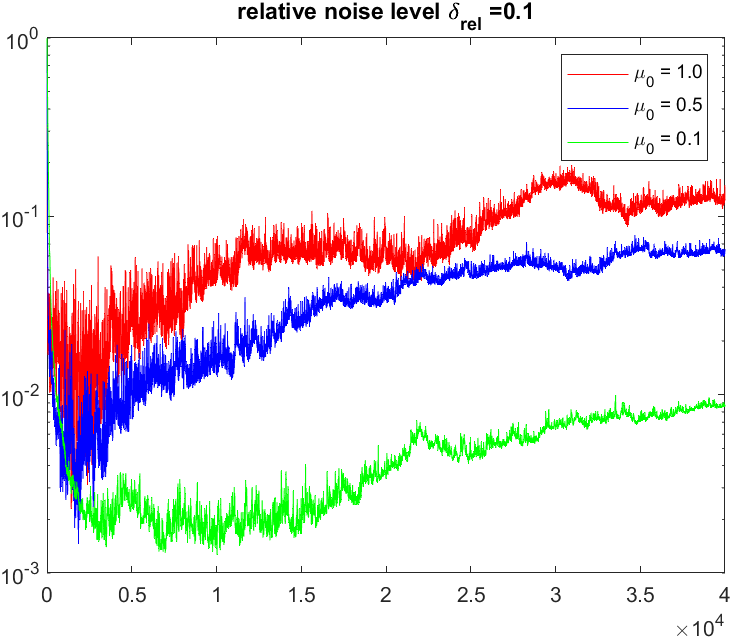}
\includegraphics[width = 0.32\textwidth]{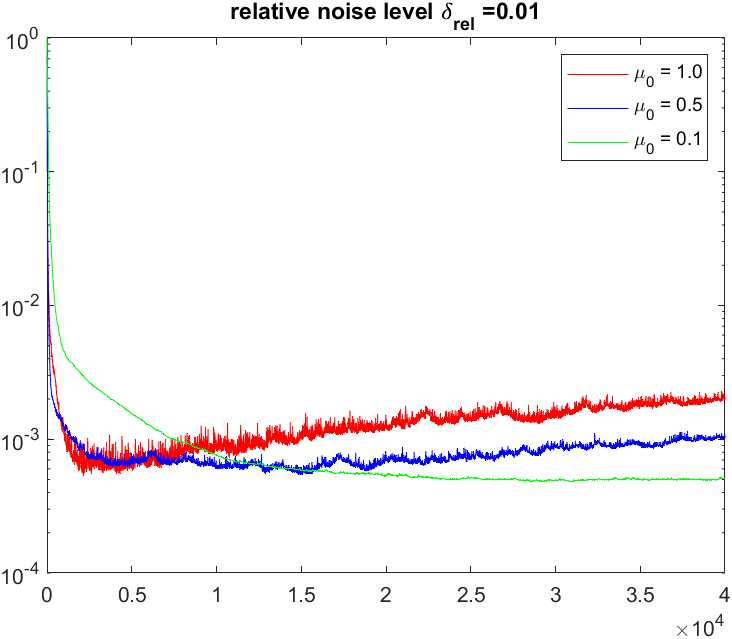}
\includegraphics[width = 0.32\textwidth]{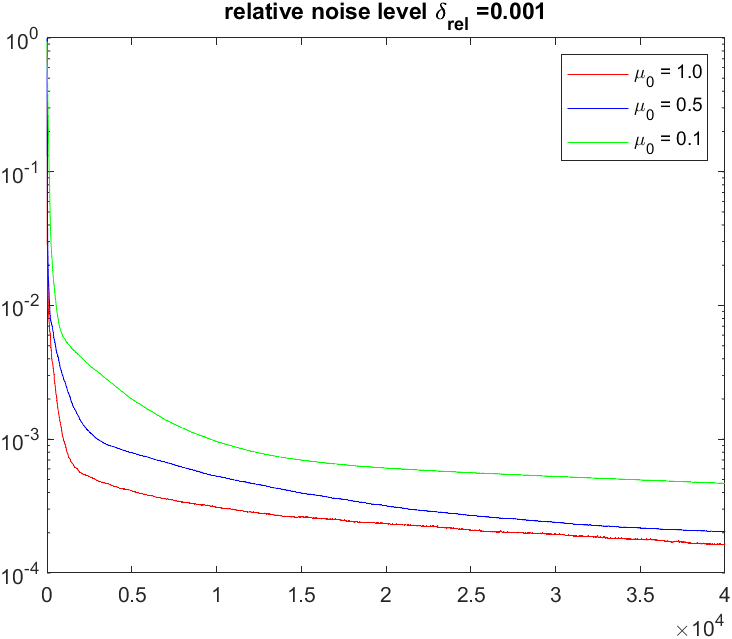}
\caption{Reconstruction errors of the stochastic gradient descent (\ref{SGD.1}) using  step-size (\ref{SGD.s1}) with various values of $\mu_0$. } \label{fig:SGD1}
\end{figure}

In order to efficiently suppress the oscillations and reduce the effect of semi-convergence, we next consider the method (\ref{SGD.1}) using the step-size chosen by (s3) whose realization relies on the information of noise level. We now assume that the noisy data have the form (\ref{nd}), where $\ep_i$ are noise uniformly distributed on $[-1, 1]$. Note that $\d_i:=\d_{rel} |y_i|$ are the noise levels with $|y_i^\d - y_i|\le \d_i$. Assuming the information on $\d_i$ is known, the step-size chosen by (s3) takes the form
\begin{align}\label{SGD.s3}
t_n^\d = \left\{\begin{array}{lll}
\mu_0/\|A_{i_n}\|^2 & \mbox{ if } |A_{i_n} x_n^\d - y_{i_n}^\d| >\tau \d_{i_n},\\
0 & \mbox{ otherwise} 
\end{array}\right.
\end{align}
which incorporates the spirit of the discrepancy principle. To illustrate the advantage of this choice of step-size, we compare the computed results with the ones obtained by the step-size chosen by (\ref{SGD.s1}). In Figure \ref{fig:SGD2} we present the numerical results of reconstruction errors by the method (\ref{SGD.1}) with batch size $b=1$ for various noise levels using the step-sizes chosen by (\ref{SGD.s1}) and (\ref{SGD.s3}) with $\mu_0 = 1$ and $\tau = 1$. where ``\texttt{DP}" and ``\texttt{No DP}" represent the results corresponding to the step-sizes chosen by (\ref{SGD.s3}) and (\ref{SGD.s1}) respectively. The first row in Figure \ref{fig:SGD2} plots the mean square errors which are calculate approximately by the average of 100 independent runs and the second row plots the reconstruction errors for a typical individual run. The results show clearly that using the step-size (\ref{SGD.s3}) can significantly suppress the oscillations in reconstruction errors in particular when the data are corrupted by noise with relatively large noise level. We have performed extensive simulations which indicate that, due to the regularization effect of the discrepancy principle,  using the step-size (\ref{SGD.s3}) has the tendency to decrease the iteration error as the number of iterations increases and has the ability to reduce the effect of semi-convergence.

\begin{figure}[htpb]
\centering
\includegraphics[width = 0.32\textwidth]{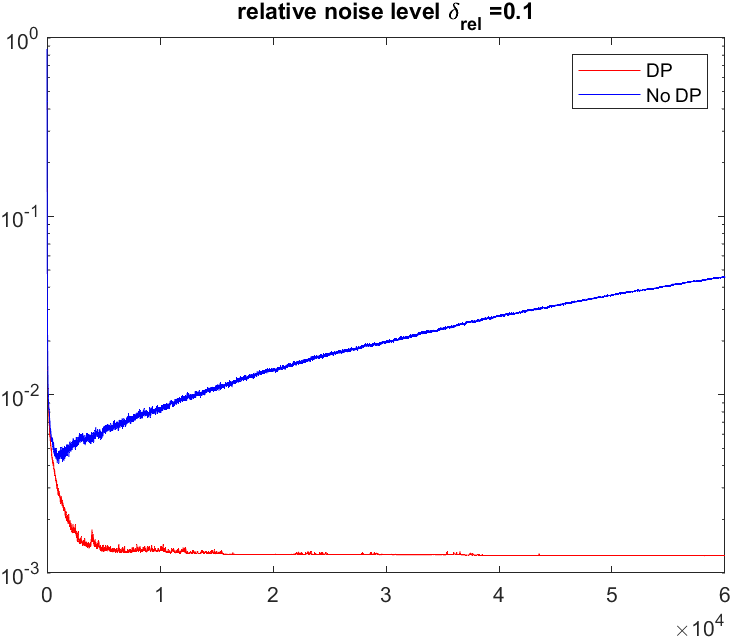}
\includegraphics[width = 0.32\textwidth]{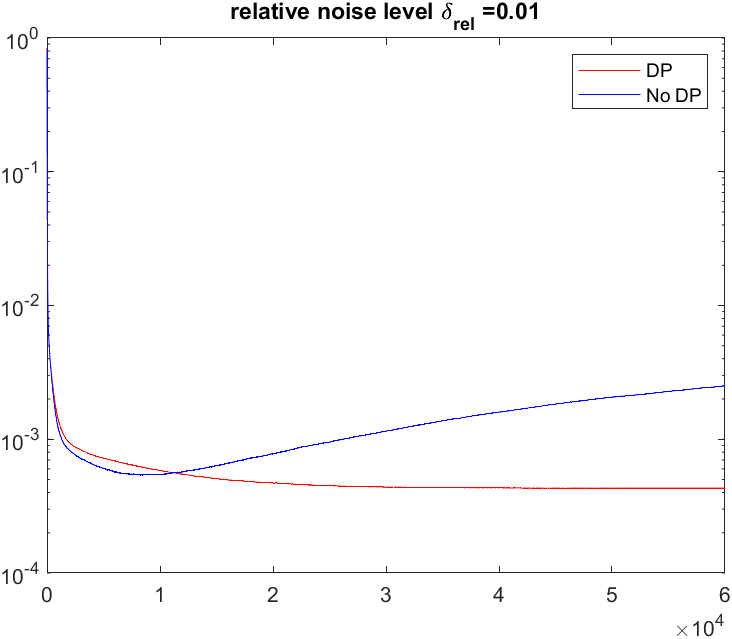}
\includegraphics[width = 0.32\textwidth]{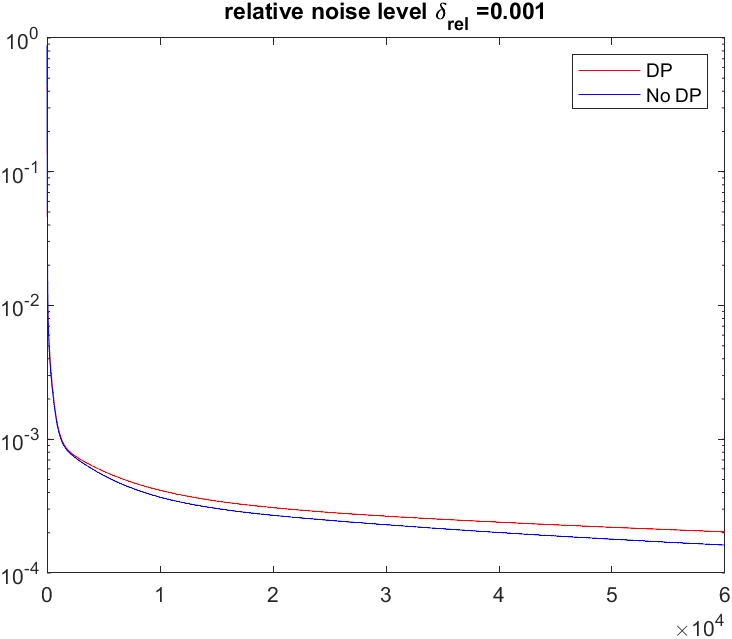}
\includegraphics[width = 0.32\textwidth]{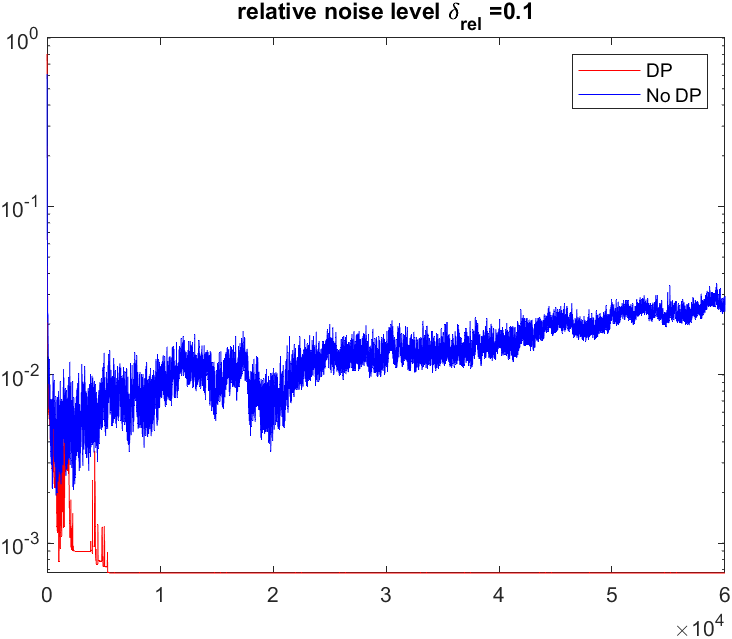}
\includegraphics[width = 0.32\textwidth]{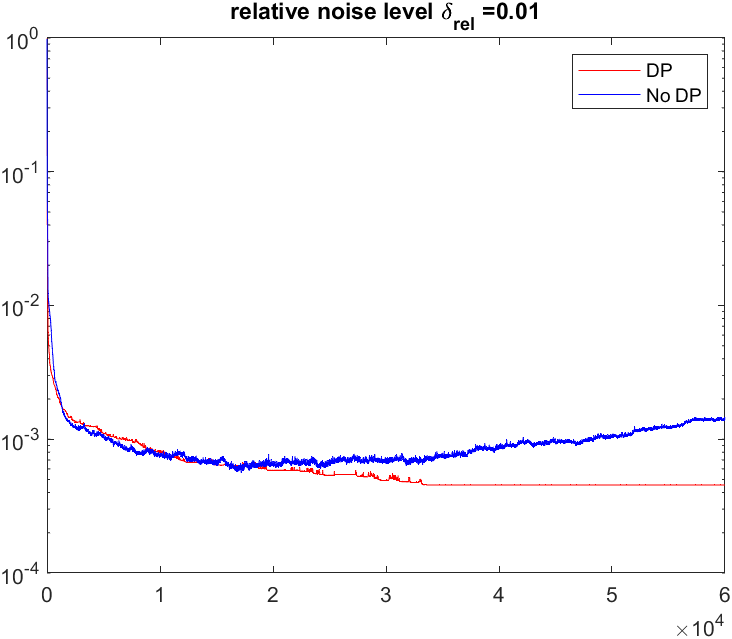}
\includegraphics[width = 0.32\textwidth]{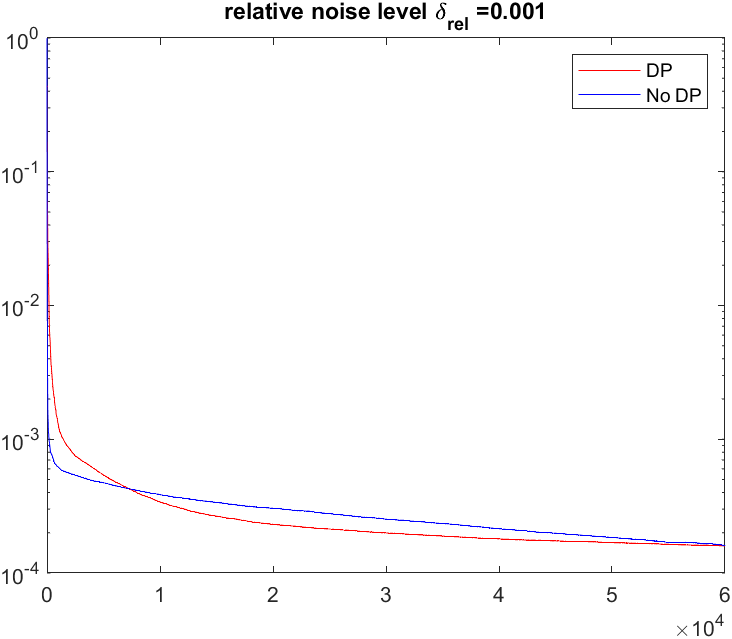}
%
\caption{Comparison of reconstruction results by the stochastic gradient descent (\ref{SGD.1}) using step-sizes (\ref{SGD.s1}) and (\ref{SGD.s3}). }\label{fig:SGD2}
\end{figure}
\end{example}

\begin{example}\label{SMD.ex2}
We consider the  linear system (\ref{smd.1}) in which $X$ and $Y_i$ are all Hilbert spaces and the sought solution satisfies the constraint $x\in \C$, where $\C\subset X$ is a closed convex set. Finding the unique solution $x^\dag$ of (\ref{smd.1}) in $\C$ with minimal norm can be stated as (\ref{rbdgm.1}) with $\R(x) := \frac{1}{2} \|x\|^2 + \iota_\C(x)$, where $\iota_\C$ denotes the indicator function of $\C$. Let $P_\C$ denote the metric projection of $X$ onto $\C$. Then the mini-batch stochastic mirror descent method for determining $x^\dag$ takes the form
\begin{align}\label{dgm.42}
x_n^\d = P_\C (\xi_n^\d), \qquad \xi_{n+1}^\d = \xi_n^\d - t_n^\d A_{I_n}^*(A_{I_n} x_n^\d - y_{I_n}^\d).
\end{align}
In case $X = L^2(\D)$ for some domain $\D \subset {\mathbb R}^d$ and $\C= \{x\in X: x \ge 0 \mbox{ a.e. on } \D\}$, the iteration scheme (\ref{dgm.42}) becomes
\begin{align}\label{SMD.NN}
x_n^\d = \max\{\xi_n^\d, 0\}, \qquad
\xi_{n+1}^\d  = \xi_n^\d - t_n^\d A_{I_n}^* (A_{I_n} x_n^\d- y_{In}^\d)
\end{align}
with an initial guess $\xi_0^\d =0$, where $I_n$ is a randomly selected subset of $\{1, \cdots, p\}$ via the uniform distribution with $|I_n|=b$ for a preassigned batch size $b$. 

As a testing example, we consider the computed tomography which consists in determining the density of cross sections of a human body by measuring the attenuation of X-rays as they propagate through the biological tissues \cite{N2001}. Mathematically, it requires to determine a function supported on a bounded domain from its line integrals. 
We consider here only the standard 2D parallel-beam tomography; tomography with other scan geometries can be considered similarly.  We consider a full angle problem using $90$ projection angles evenly distributed between $1$ and $180$ degrees, with $367$ lines per projection. Assuming the sought image is discretized on a $256 \times 256$ pixel grid, we may use the function \texttt{paralleltomo} in the MATLAB package AIR TOOLS \cite{HS2012} to discretize the problem. After deleting those rows with zero entries, it leads to a linear system $A x = y$, where $A$ is a matrix with size $29658\times 65536$. This gives a problem of the form (\ref{smd.1}) with $p = 29658$, where $A_i$ corresponds to the $i$-th row of $A$ and $y_i$ is the $i$-th component of $y$. 

\begin{figure}[htpb]
\centering
\includegraphics[width = 0.22\textwidth]{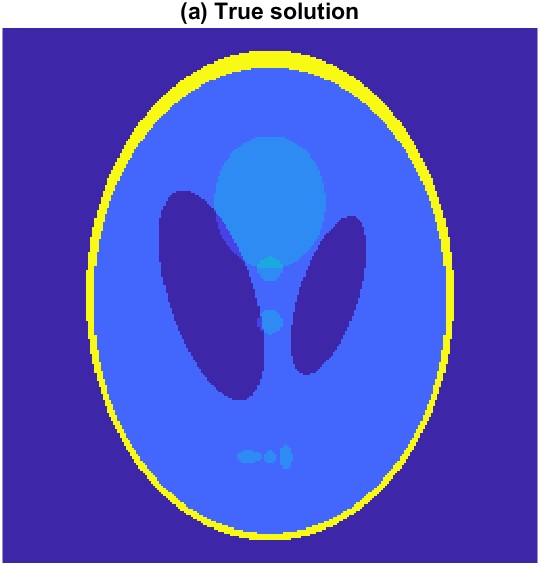}
\includegraphics[width = 0.22\textwidth]{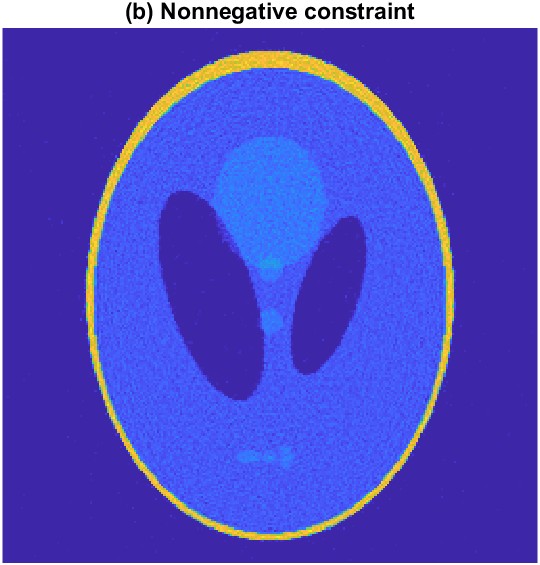}
\includegraphics[width = 0.22\textwidth]{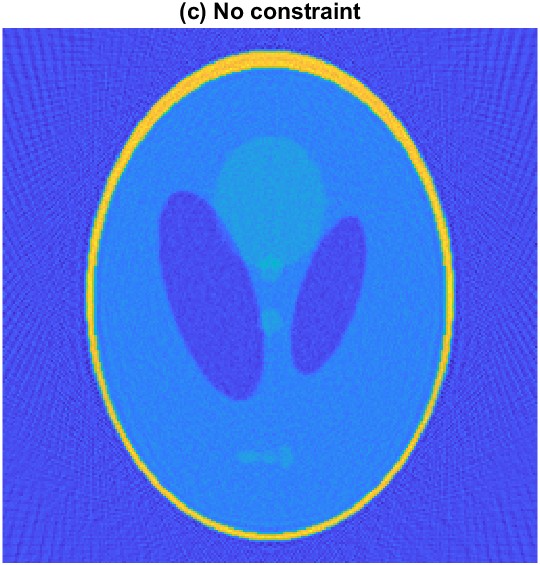}
\includegraphics[width = 0.3\textwidth]{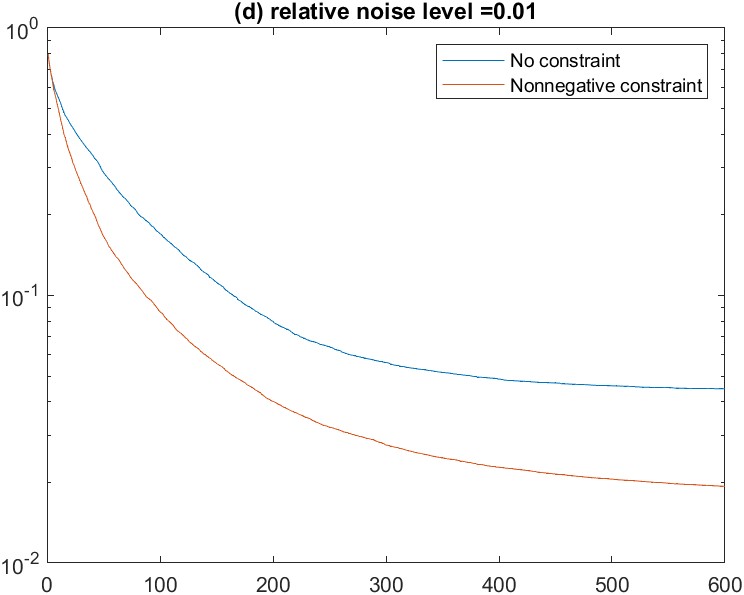}
\caption{Reconstruction of the Shepp-Logan phantom by the methods (\ref{SMD.NN}) and (\ref{SGD.1}) with batch size $b = 400$ and step-size chosen by (s2) with $\mu_0 =1$.}\label{fig.CT}
\end{figure}

We assume that the true image is the Shepp-Logan phantom shown in figure \ref{fig.CT} (a) discretized on a $256 \times 256$ pixel grid with nonnegative pixel values. This phantom is widely used in
evaluating tomographic reconstruction algorithms.
Let $x^\dag$ denote the vector formed by stacking all the columns of the true image and let $y = A x^\dag$ be the true data. By adding Gaussian noise to $y_i$ we get the noisy data $y_i^\d$ of the form (\ref{nd}), where $\ep_i \sim N(0, 1)$ and $\d_{rel} = 0.01$. Since the sought solution is nonnegative, we may reconstruct it by using the method (\ref{SMD.NN}). As a comparison, we also consider reconstructing the sought phantom by the method (\ref{SGD.1}) for which the nonnegative constraint is not incorporated. For the both methods, we use the batch size $b = 400$ and the step-size chosen by (s2) with $\mu_0 = 1$. We execute the methods for 600 iterations and plot the reconstruction results in Figure \ref{fig.CT} (b) and (c).  In Figure \ref{fig.CT} (d) we also plot the squares of the relative errors $\|x_n^\d - x^\dag\|^2/\|x^\dag\|^2$ which indicate that incorporating the nonnegative constraint can produce more accurate reconstruction results. This example demonstrates that available {\it a priori} information on sought solutions should be incorporated into algorithm design to assist with better reconstruction results. 
\end{example}

\begin{example} \label{SMD.ex3}
Let $\D \subset {\mathbb R}^d$ be a bounded domain. Consider the linear system (\ref{smd.1}), 
where $A_i: L^1(\D)\to Y_i$ is a bounded linear operator and $Y_i$ is a Hilbert space for each $i$. 
We assume the sought solution $x^\dag$ is a probability density function, i.e. 
$x^\dag \ge 0$ a.e. on $\D$ and $\int_\D x^\dag = 1$. To determine such a solution, we take 
$
\R(x) := f(x) + \iota_\Delta(x),
$
where $\iota_\Delta$ denotes the indicator function of 
$$
\Delta:= \left\{ x\in L_+^1(\D): \int_\D x^\dag = 1\right\}
$$
and 
$$
f(x) = \left\{\begin{array}{lll}
\int_\D x \log x, & \mbox{ if } x \in L_{+}^1(\D) \mbox{ and } x\log x \in L^1 (\D), \\
+ \infty, & \mbox{ otherwise}
\end{array}\right. 
$$
is the negative of the Boltzmann-Shannon entropy.
Here $L_{+}^1(\D):= \{x\in L^1(\D): x\ge 0 \mbox{ a.e. on } \D\}$. The Boltzmann-Shannon entropy has been used in Tikhonov regularization as a regularization functional to determine nonnegative solutions, see \cite{E1993,EL1993} for instance. According to \cite{BL1991,E1993}, $\R$ is strongly convex on $L^1(\D)$ with modulus of convexity $\sigma = 1/2$. By the Karush-Kuhn-Tucker theory, for any $\xi \in L^\infty(\D)$ the unique minimizer of 
$$
\min_{x\in L^1(\D)} \left\{ \R(x) - \int_\D \xi x \right\}
$$
is given by $\hat x = e^{\xi}/\int_\D e^\xi$. Therefore the corresponding stochastic mirror descent method takes the form 
\begin{align}\label{smd:entropy}
x_n^\d = \frac{1}{\int_\D e^{\xi_n^\d}} e^{\xi_n^\d}, \qquad 
\xi_{n+1}^\d = \xi_n^\d - t_n^\d A_{I_n}^* (A_{I_n} x_n^\d - y_{I_n}^\d), 
\end{align}
where $I_n \subset \{1, \cdots, p\}$ is a randomly selected subset via the uniform distribution with a preassigned batch size $b$ and $t_n^\d\ge 0$ is the step-size. 

For numerical simulations we consider the linear system (\ref{smd.testexample}) with $[a, b] = [0,1]$, $p = 1000$ and $k(s, t) := 4 e^{-(s-t)^2/0.0064}$. We assume the sought solution is 
$$
x^\dag(t):= c \left(e^{-60(t-0.3)^2} + 0.3 e^{-40(t-0.8)^2}\right),
$$
where $c>0$ is a constant to ensure that $\int_0^1 x^\dag (t) dt = 1$ so that $x^\dag$ is a probability density function. By adding random noise to the exact data $y_i:=A_i x^\dag$ we get the noisy data $y_i^\d$; we then use these noisy data and the method (\ref{smd:entropy}) with batch size $b = 1$ to reconstruct the sought solution $x^\dag$; the integrals involved in the method are approximated by the trapezoidal rule based on the partition of $[0,1]$ into $p-1$ subintervals of equal length. 

\begin{figure}[htpb]
\centering
\includegraphics[width = 0.32\textwidth]{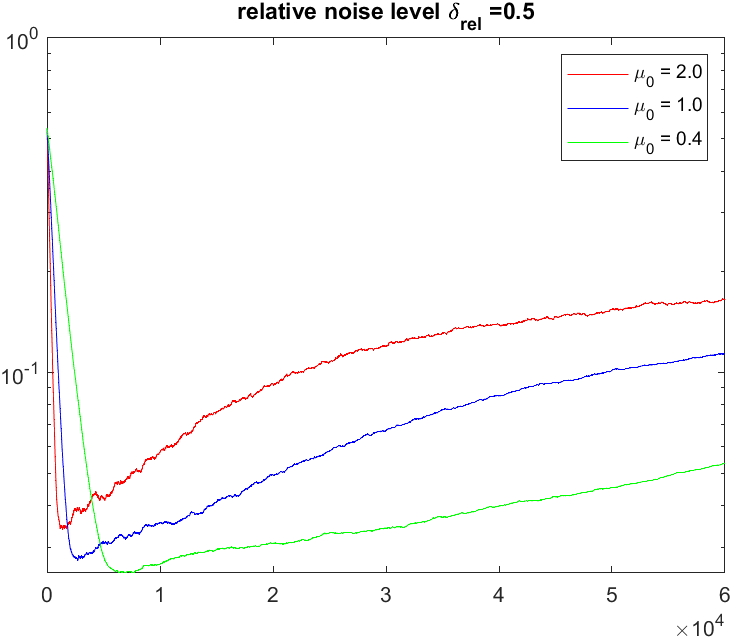}
\includegraphics[width = 0.32\textwidth]{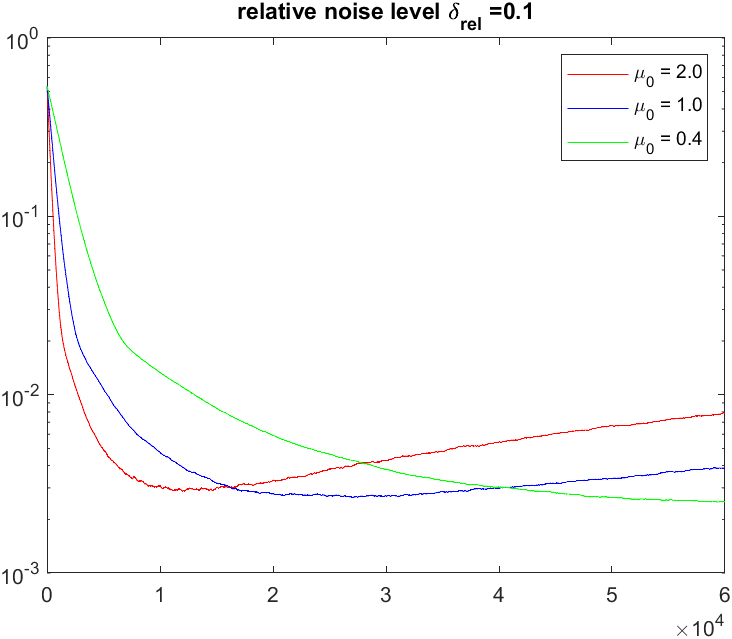}
\includegraphics[width = 0.32\textwidth]{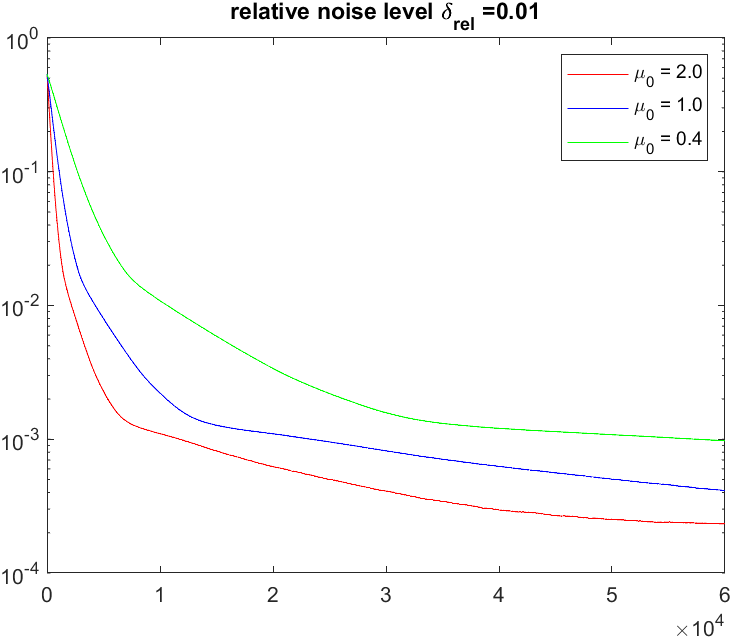}
\includegraphics[width = 0.32\textwidth]{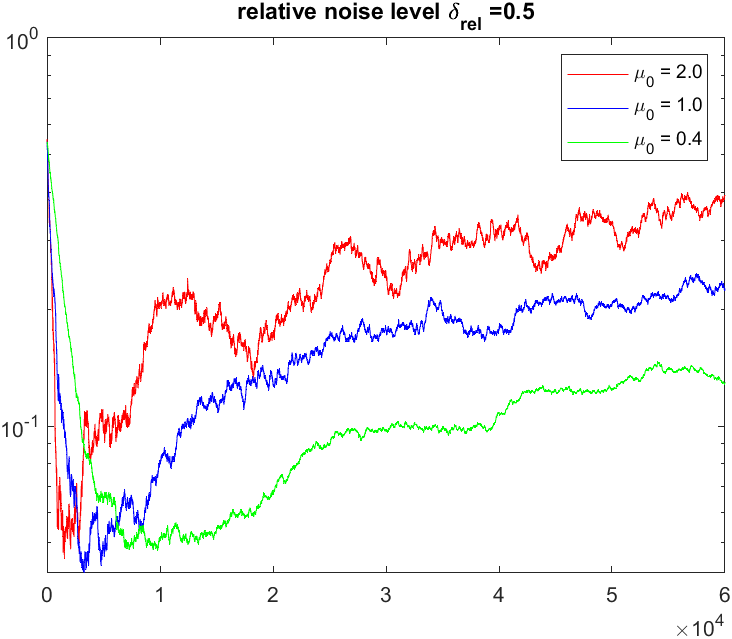}
\includegraphics[width = 0.32\textwidth]{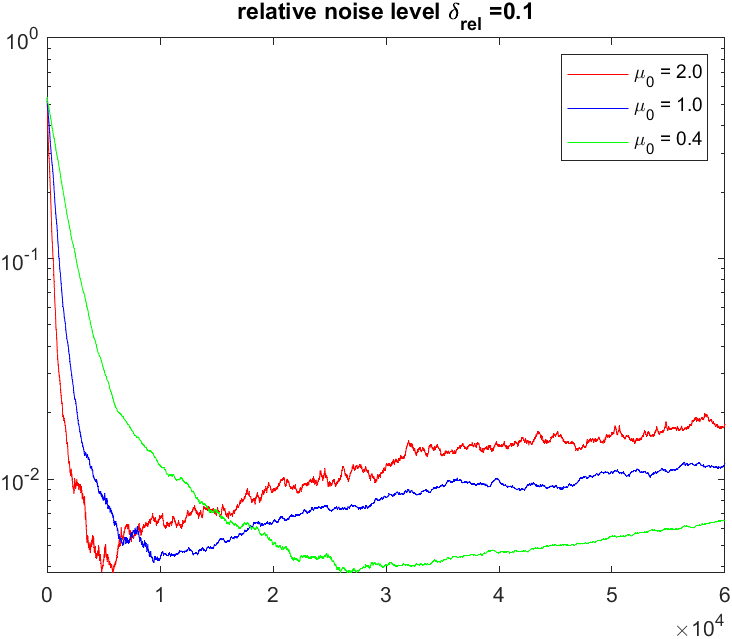}
\includegraphics[width = 0.32\textwidth]{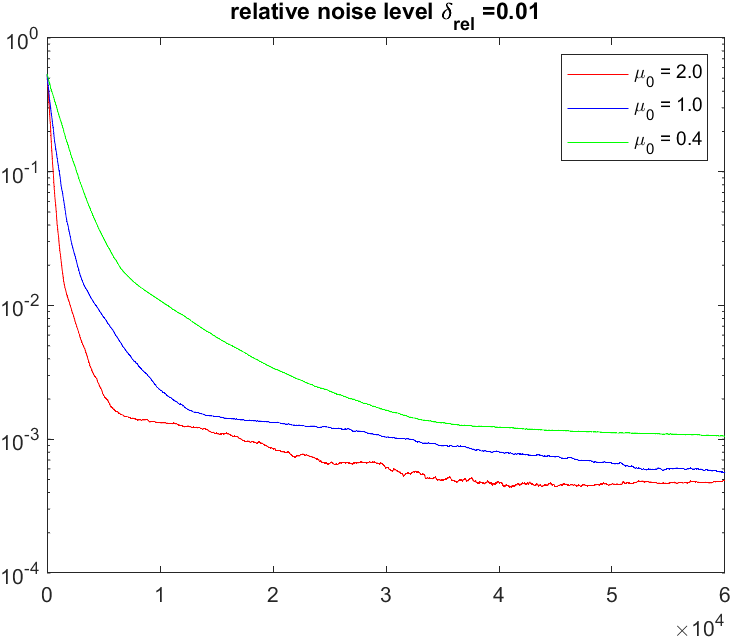}
%
\caption{Relative errors for reconstructing a solution, that is a probability density function, by the method (\ref{smd:entropy}) using  step-size (s2) with various values of $\mu_0$.  }\label{fig:entropy1}
\end{figure}

We first test the performance of the method (\ref{smd:entropy}) using the noisy data given by (\ref{nd}) corrupted by Gaussian noise, where $\ep_i \sim N(0, 1)$ and $\d_{rel}>0$ is the relative noise level. We execute the method (\ref{smd:entropy}) using the batch size $b = 1$, the initial guess $\xi_0^\d =0$ and the step-size $t_n^\d$ chosen by (s2) with three distinct values $\mu_0 = 2.0, 1.0$ and $0.4$. In Figure \ref{fig:entropy1} we plot the reconstruction errors for three distinct relative noise levels $\d_{rel} = 0.5, 0.1$ and $0.01$. The first row plots the mean square errors $\EE[\|x_n^\d - x^\dag\|_{L^1}^2]$ which are calculated approximately by the average of 100 independent runs. The second row plots $\|x_n^\d - x^\dag\|_{L^1}^2$ for a particular individual run.  These plots demonstrate that the method (\ref{smd:entropy}) admits the semi-convergence property and the iterates exhibit dramatic oscillations. Furthermore, using large step-sizes can make convergence fast at the beginning but the iterates can diverge quickly; while using small step-sizes can suppress the oscillations and delay the occurrence of semi-convergence, it however makes the method converge slowly. 

\begin{figure}[htpb]
\centering
\includegraphics[width = 0.32\textwidth]{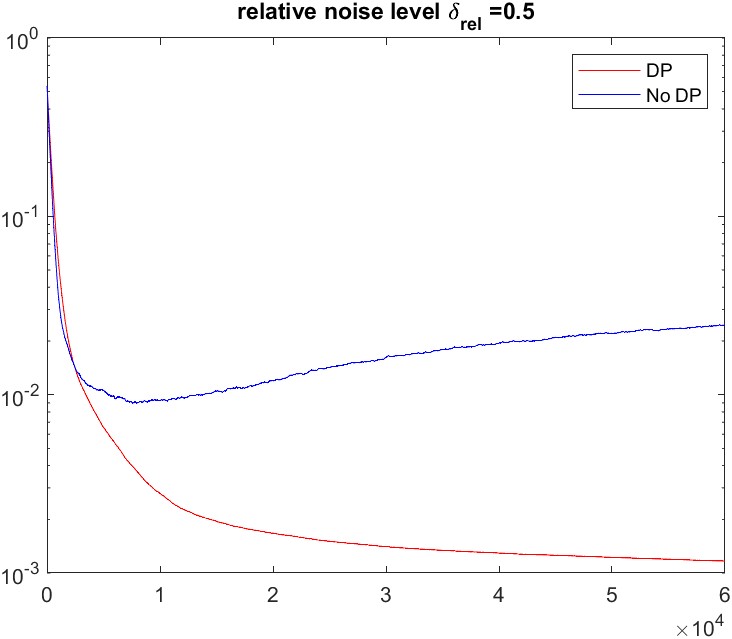}
\includegraphics[width = 0.32\textwidth]{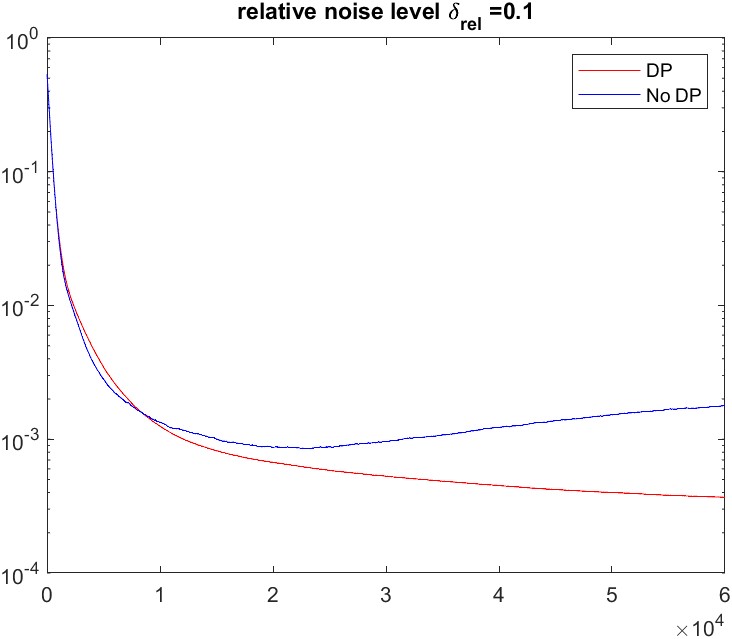}
\includegraphics[width = 0.32\textwidth]{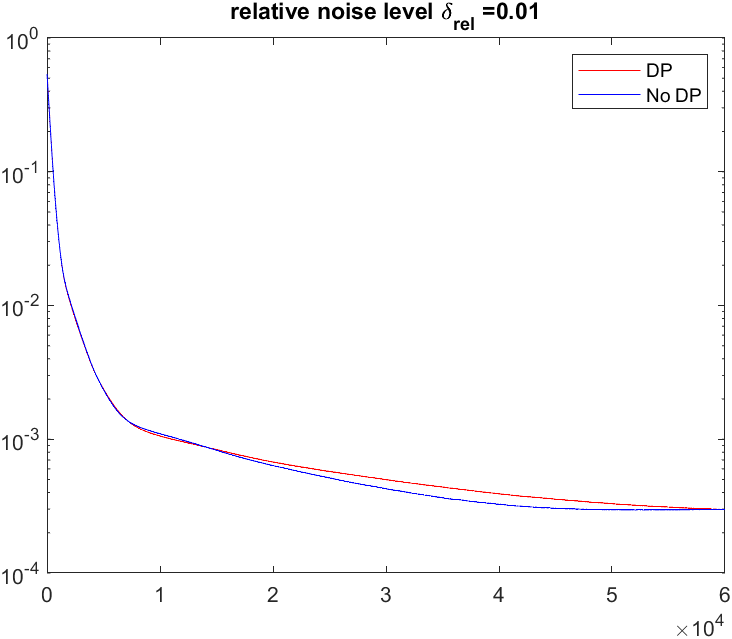}
\includegraphics[width = 0.32\textwidth]{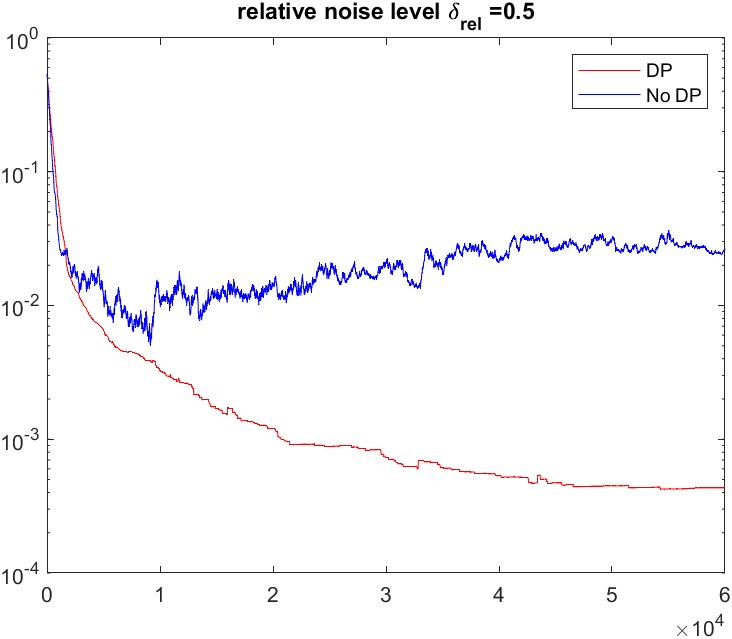}
\includegraphics[width = 0.32\textwidth]{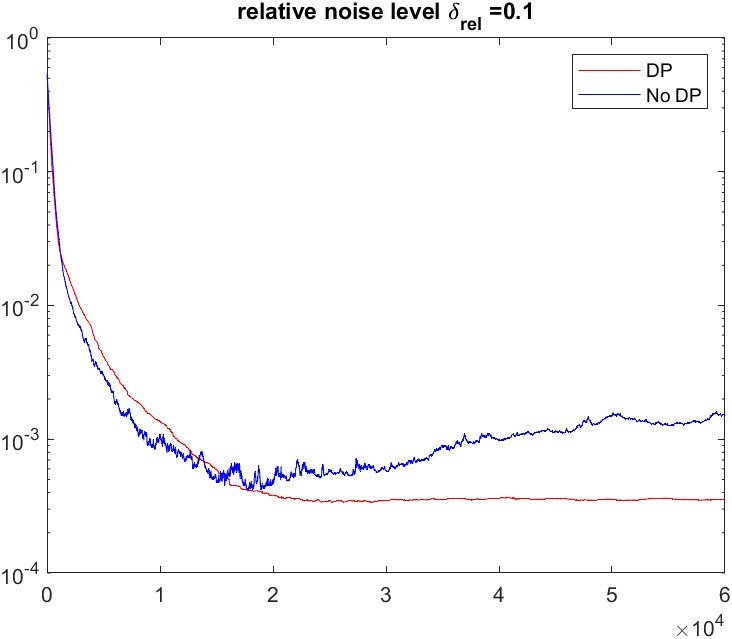}
\includegraphics[width = 0.32\textwidth]{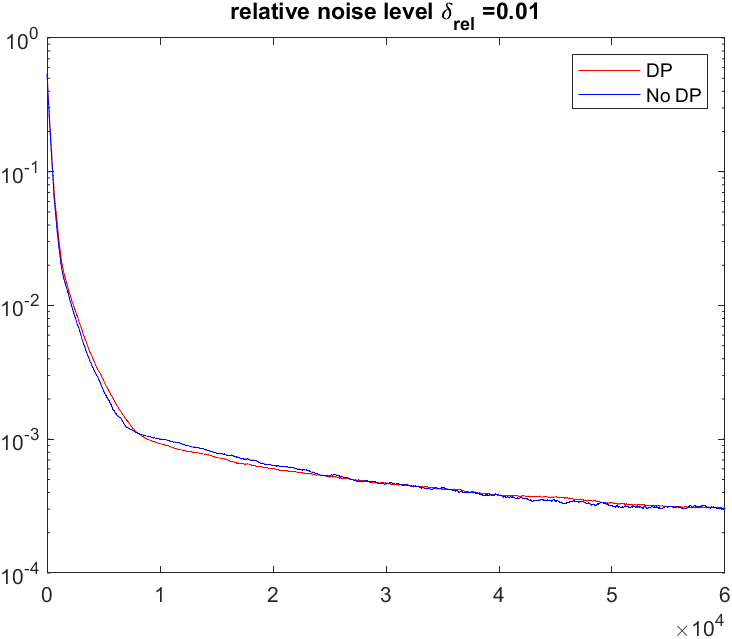}
%
\caption{Comparison of reconstruction results for a solution that is a probability density function by the method (\ref{smd:entropy}) using step-sizes chosen by (s2) and (s3). }\label{fig:entropy2}
\end{figure}

In order to efficiently remove oscillations and semi-convergence, we next consider the method (\ref{smd:entropy}) using the step-size chosen by (s3) whose realization relies on the information of noise level. We consider noisy data of the form (\ref{nd}), where $\ep_i$ are uniform noise distributed on $[-1, 1]$. Note that $\d_i:=\d_{rel} |y_i|$ are the noise levels such that $|y_i^\d - y_i|\le \d_i$ for all $i$; we assume the information on $\d_i$ is known. To illustrate the advantage of the step-size chosen by (s3), we compare the computed results with the ones obtained by the step-size chosen by (s2). In Figure \ref{fig:entropy2} we plot the reconstruction errors by the method (\ref{smd:entropy}) for various relative noise levels using the step-sizes chosen by (s2) and (s3) with $\mu_0 = 2$ and $\tau = 1$, where ``\texttt{DP}" and ``\texttt{No DP}" represent the results corresponding to the step-size chosen by (s3) and (s2) respectively. The first row in Figure \ref{fig:entropy2} plots the mean square errors calculated approximately by the average of 100 independent runs and the second row plots the reconstruction errors for a particular individual run. The results show clearly that using step-size by (s3) can significantly suppress the oscillations in iterates and relieve the method from semi-convergence.
\end{example}

\begin{example}\label{SMD.ex4} 
Let $\D \subset {\mathbb R}^d$ be a bounded domain. We consider reconstructing a sparse solution in the linear system (\ref{smd.1}), 
where $A_i: L^2(\D) \to Y_i$ is a bounded linear operator and $Y_i$ is a Hilbert space for each $i$. To determine such a solution, we take $\R(x) := \beta \|x\|_{L^1(\D)} + \frac{1}{2} \|x\|_{L^2(\D)}^2$, where $\beta>0$ is a sufficiently large number whose choice reflects the role of the term $\|x\|_{L^1(\D)}$ in the reconstruction of sparse solutions. Note that for any $\xi \in L^2(\D)$ we have 
$$
\arg\min_{x\in L^2(\D)} \left\{\R(x) - \l \xi, x\r \right\} = \mbox{sign}(\xi)\max\{|\xi|-\beta, 0\}.
$$
Therefore the corresponding stochastic mirror descent method takes the form 
\begin{align}\label{smd:sparse}
x_n^\d = \mbox{sign}(\xi_n^\d) \max\{|\xi_n^\d| -\beta, 0\}, \quad 
\xi_{n+1}^\d = \xi_n^\d - t_n^\d A_{I_n}^* (A_{I_n} x_n^\d - y_{I_n}^\d), 
\end{align}
where $I_n \subset \{1, \cdots, p\}$ is a randomly selected subset satisfying $|I_n|=b$ via the uniform distribution with the preassigned batch size $b$ and $t_n^\d\ge 0$ is the step-size. We remark that, when $|I_n|=1$ and $I_n = n \ (\mbox{mod } p) +1$, the corresponding method is the sparse-Kaczmarz method which was first proposed in \cite{JW2013} to reconstruct sparse solutions of ill-posed inverse problems. For finite-dimensional linear systems with well-conditioned matrices, a randomized sparse-Kaczmarz method of the form (\ref{smd:sparse}) was introduced in \cite{SL2019}
in which $I_n$ is chosen with a probability proportional to $\|A_{I_n}\|^2$ and a linear convergence rate is derived. The theoretical result in \cite{SL2019} however is not applicable to ill-posed problems because the analysis depends heavily on the finite-dimensionality of the underlying spaces and the well-conditioning property of the matrices.

For numerical simulations we consider the linear system (\ref{smd.testexample}) with $[a,b] = [0,1]$ and $k(s, t) := (0.1^2+ (s-t)^2)^{-3/2}$. We assume the sought solution is 
$$
x^\dag(t):= \chi_{\{0.19\le t\le 0.22\}} - \chi_{\{0.50\le t\le 0.52\}} + 0.5 \chi_{\{0.78\le t\le 0.80\}} 
$$
which is sparse on $[0, 1]$. By adding random noise to the exact data $y_i:=A_i x^\dag$ we get the noisy data $y_i^\d$; we then use these noisy data and the method (\ref{smd:sparse}) with batch size $b = 1$ to reconstruct the sought solution $x^\dag$. In our numerical computation we take $p = 1000$ and $\beta = 80$ and the integrals involved in the method are approximated by the trapezoidal rule based on the partition of $[0,1]$ into $p-1$ subintervals of equal length. 

\begin{figure}[htpb]
\centering
\includegraphics[width = 0.32\textwidth]{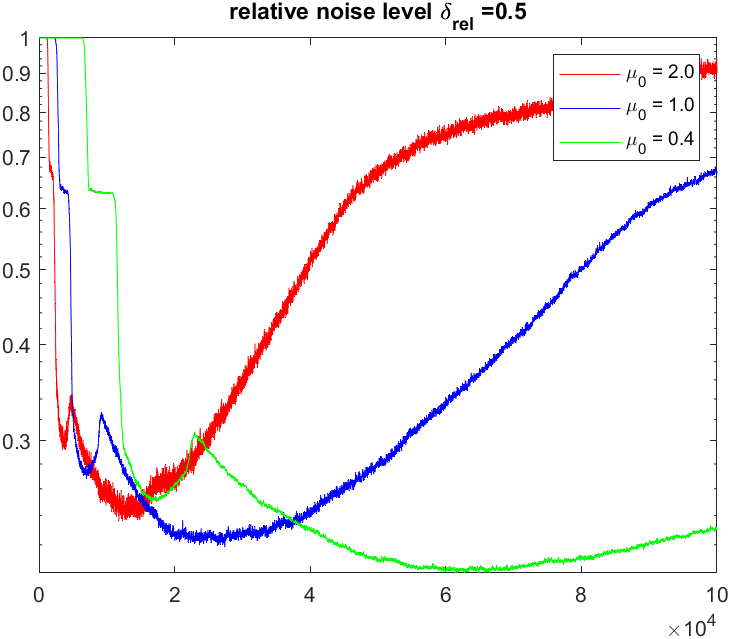}
\includegraphics[width = 0.32\textwidth]{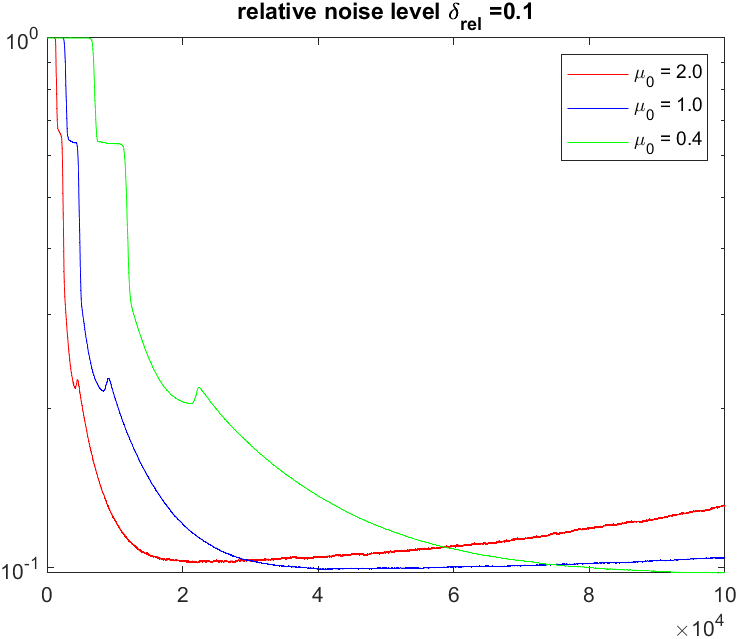}
\includegraphics[width = 0.32\textwidth]{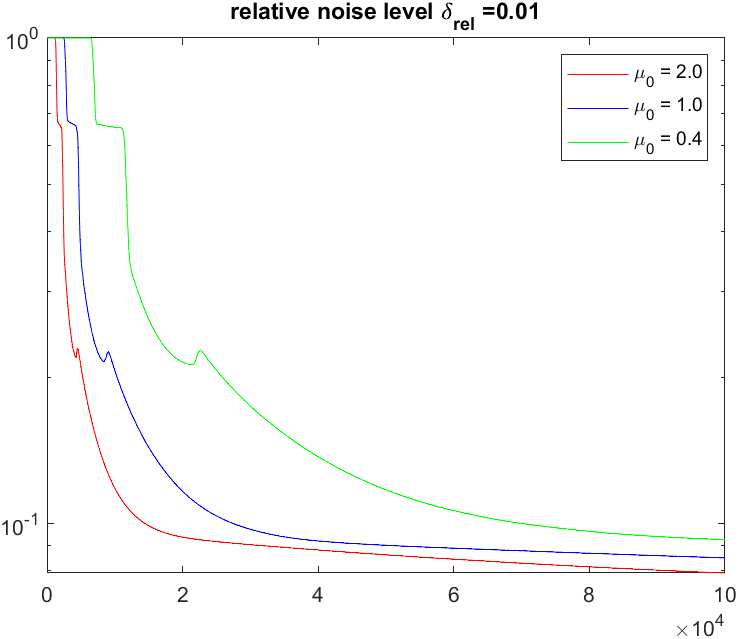}
\includegraphics[width = 0.32\textwidth]{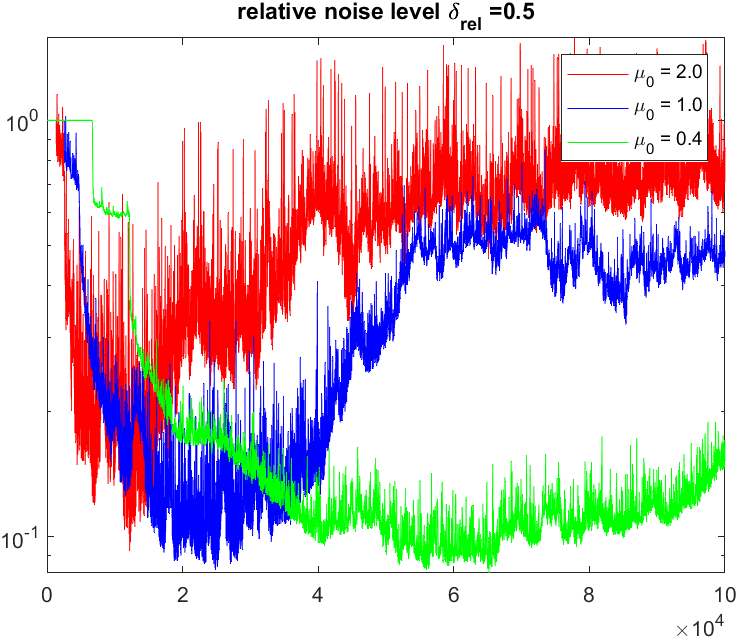}
\includegraphics[width = 0.32\textwidth]{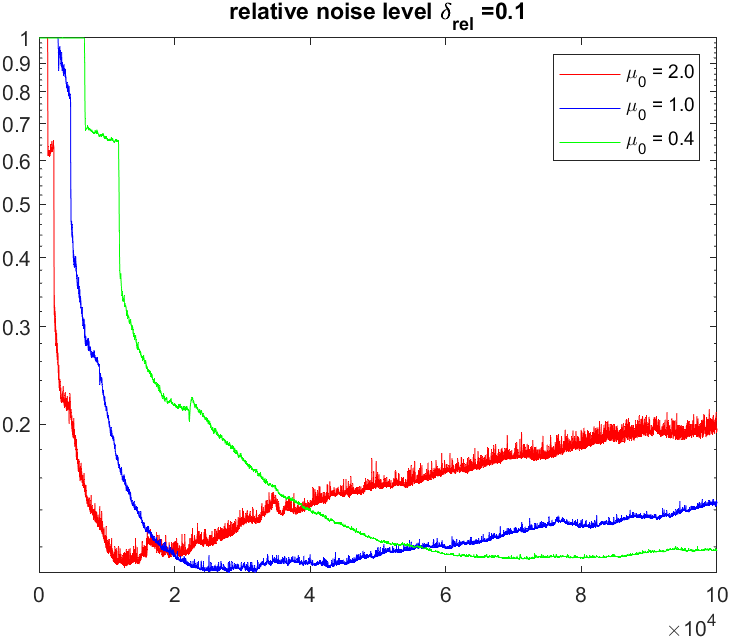}
\includegraphics[width = 0.32\textwidth]{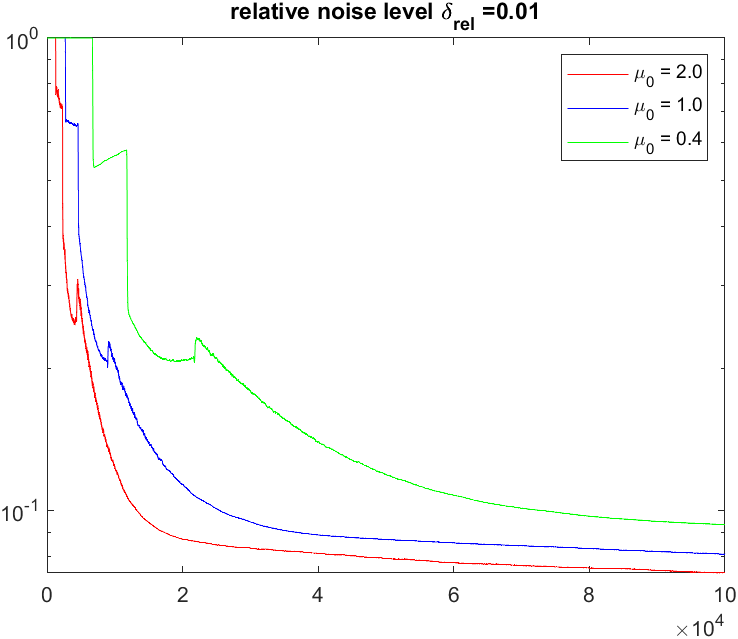}
%
\caption{Relative errors for reconstructing a sparse solution by the method (\ref{smd:sparse}) using  step-size (s2) with various values of $\mu_0$. }\label{fig:sparse1}
\end{figure}

We first test the performance of the method (\ref{smd:sparse}) using the noisy data given by (\ref{nd}) corrupted by Gaussian noise, where $\ep_i \sim N(0, 1)$. We execute the method (\ref{smd:sparse}) using the batch size $b = 1$, the initial guess $\xi_0^\d =0$ and the step-size $t_n^\d$ chosen by (s2) with three distinct values $\mu_0 = 2.0, 1.0$ and $0.4$. In Figure \ref{fig:sparse1} we plot the reconstruction errors for three distinct relative noise levels $\d_{rel} = 0.5, 0.1$ and $0.01$. The first row plots the relative mean square errors $\EE[\|x_n^\d - x^\dag\|_{L^2}^2/\|x^\dag\|_{L^2}^2]$ calculated approximately by the average of 100 independent runs, and the second row plots the relative errors $\|x_n^\d - x^\dag\|_{L^2}^2/\|x^\dag\|_{L^2}^2$ for a particular individual run.  From these plots we can observe the oscillations in iterates, the semi-convergence of the method, and the influence of the magnitude of step-sizes. 

\begin{figure}[htpb]
\centering
\includegraphics[width = 0.32\textwidth]{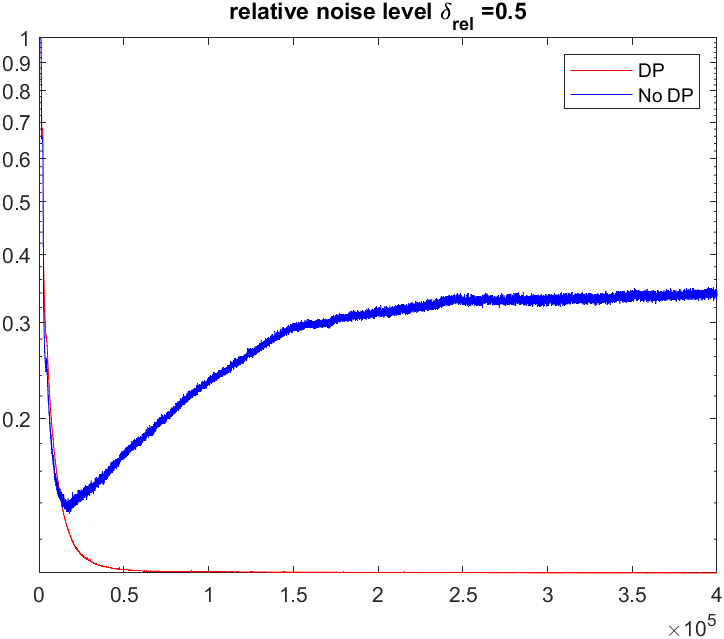}
\includegraphics[width = 0.32\textwidth]{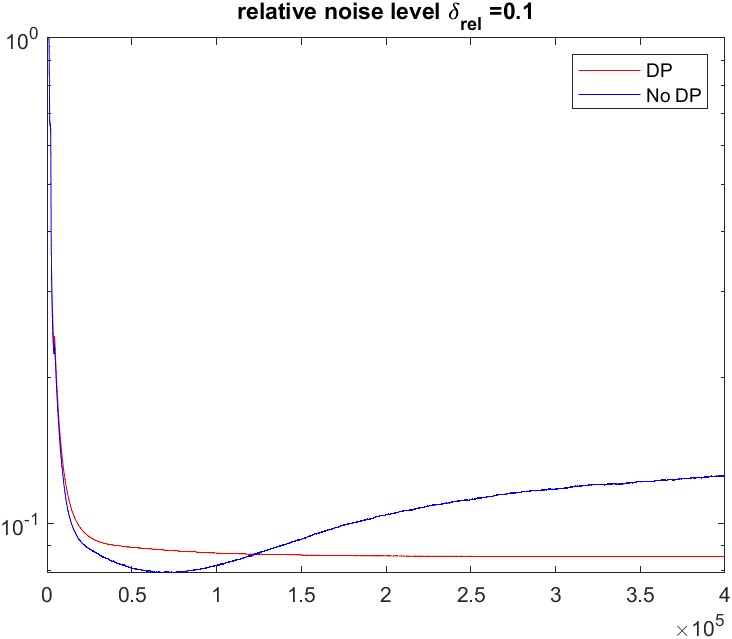}
\includegraphics[width = 0.32\textwidth]{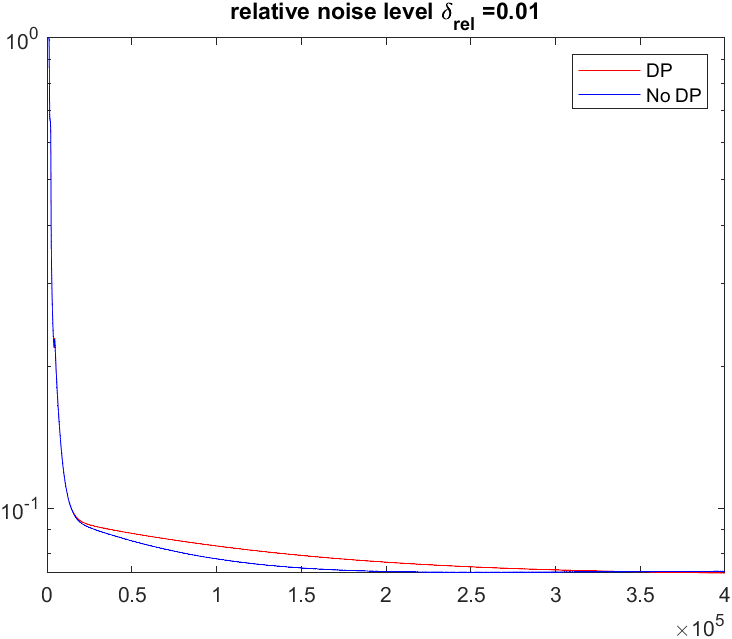}
\includegraphics[width = 0.32\textwidth]{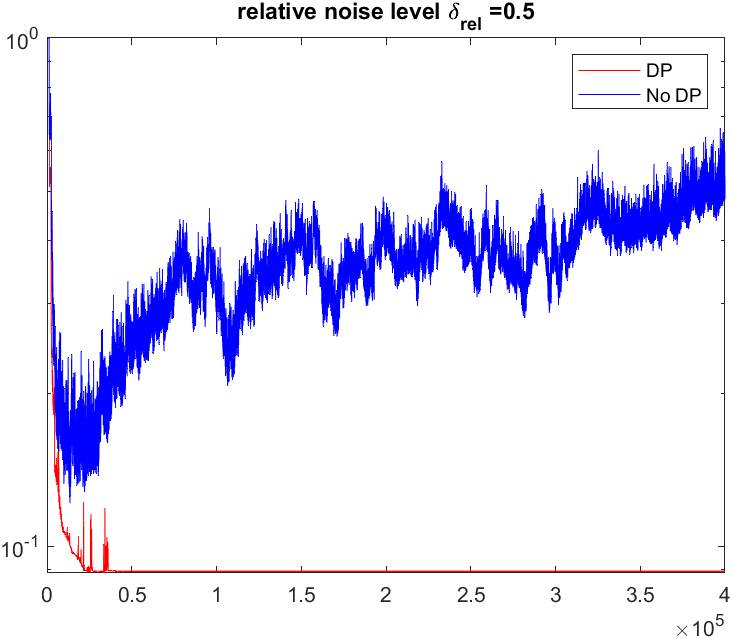}
\includegraphics[width = 0.32\textwidth]{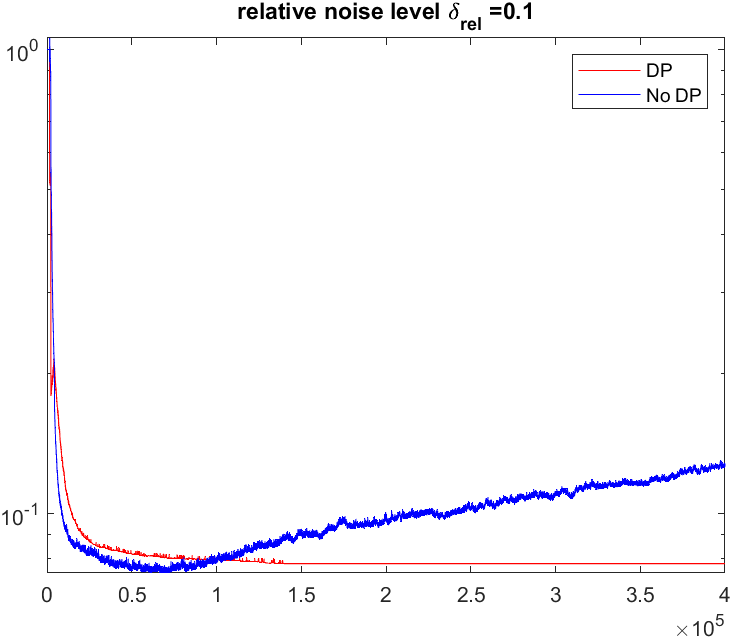}
\includegraphics[width = 0.32\textwidth]{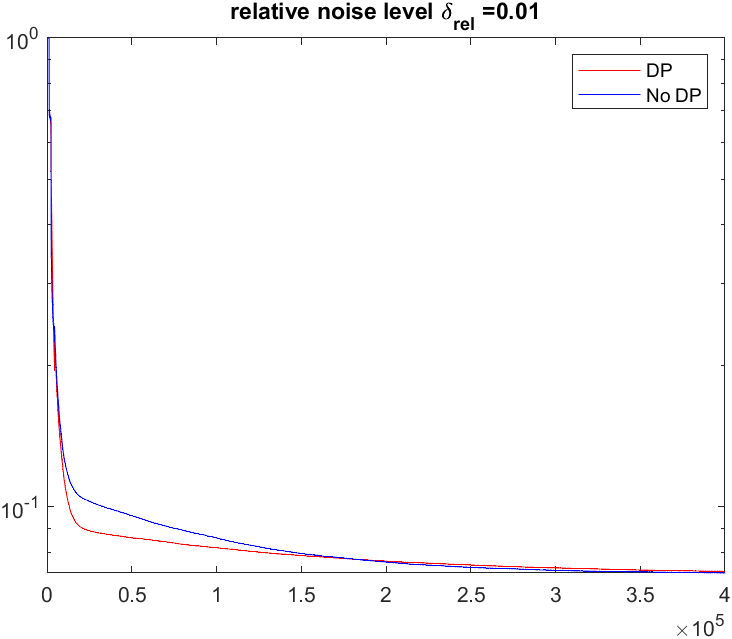}
%
\caption{Reconstruction of sparse solutions by the method (\ref{smd:sparse}) using step-sizes chosen by (s2) and (s3). The first row plots the relative mean square errors calculated approximately by the average of 100 independent runs and the second row plots the relative errors for a particular individual run. }\label{fig:sparse2}
\end{figure}

We next consider the method (\ref{smd:sparse}) using the step-size chosen by (s3). We use noisy data of the form (\ref{nd}) with $\ep_i$ being uniformly distributed over $[-1, 1]$ and assume that the noise levels $\d_i = \d_{rel} |y_i|$ are known. In Figure \ref{fig:sparse2} we plot the reconstruction errors of the method (\ref{smd:sparse}) using the step-sizes chosen by (s3) with $\mu_0 = 2$ and $\tau = 1.01$ which are labelled as ``\texttt{DP}"; as comparisons we also plot the corresponding results using step-sizes chosen by (s2) with $\mu_0 = 2$ which are labelled as ``\texttt{No DP}".  Sharply contrast to (s2), these results illustrate that using step-size by (s3) can significantly suppress the oscillations in iterates and reduce the effect of semi-convergence.
\end{example}

\begin{example} \label{SMD.ex5}
In this final example we consider using the stochastic mirror descent method to reconstruct piecewise constant solutions. We consider again the linear system (\ref{smd.testexample}) with $[a, b] = [0,1]$ and $k(s,t) = 4e^{-(s-t)^2/0.0064}$ and assume that the sought solution $x^\dag$ is piecewise constant. By dividing $[0,1]$ into $p-1$ subintervals of equal length and approximating integrals by the trapezoidal rule, we have a discrete ill-posed system $\bA_i \bx = y_i$, where $\bA_i$ is a row vector for each $i$. To reconstruct a piecewise constant solution, we use the model
\begin{align}\label{TV.1}
\min\left\{\R(\bx):=\beta \|\bD \bx\|_1 + \frac{1}{2} \|\bD \bx\|_2^2 + \frac{1}{2} \|\bx\|_2^2: \bA_i \bx = y_i, i =1, \cdots, p \right\},
\end{align}
where $\bD$ denotes the discrete gradient operator and $\beta$ is a large positive number. If we apply the stochastic mirror descent method to solve (\ref{TV.1}) directly, we need to solve a minimization problem related to $\R$ to obtain $\bx_n^\d$ at each iteration step. This can make the algorithm time-consuming since those minimization problems can not be solved explicitly. To circumvent this difficulty, we introduce $\bz := \bD \bx$ and rephrase (\ref{TV.1}) as 
\begin{align}\label{TV.2}
\min\left\{\beta \|\bz\|_1 + \frac{1}{2} \|\bz\|_2^2 + \frac{1}{2} \|\bx\|_2^2: \bB_i \begin{pmatrix} \bx \\ \bz \end{pmatrix} = \begin{pmatrix} y_i \\ {\bf 0} \end{pmatrix}, i =1, \cdots, p \right\},
\end{align}
where $\bB_i:=\begin{pmatrix} \bA_i & {\bf 0} \\ \bD & -{\bf I} \end{pmatrix}$. Assuming, instead of $y_i$, we have the noisy data $y_i^\d$. By applying 
the stochastic mirror descent method with batch size $b=1$ to (\ref{TV.2}) we can obtain 
\begin{align*}
& \begin{pmatrix}
\bx_n^\d \\[1ex] \bz_n^\d 
\end{pmatrix} = \arg\min_{\bx, \bz} \left\{\beta\|\bz\|_1 + \frac{1}{2}\|\bz\|_2^2 + \frac{1}{2} \|x\|_2^2 - \l \xi_n^\d, \bx\r -\l \eta_n^\d, \bz\r \right\}, \\
& \begin{pmatrix}
\xi_{n+1}^\d \\[1ex] \eta_{n+1}^\d 
\end{pmatrix} = \begin{pmatrix}
\xi_{n}^\d \\[1ex] \eta_{n}^\d 
\end{pmatrix} - t_n^\d \bB_{i_n}^T\left(\bB_{i_n}\begin{pmatrix} \bx_n^\d \\[1ex] \bz_n^\d \end{pmatrix} -\begin{pmatrix} y_{i_n}^\d \\[1ex] {\bf 0} \end{pmatrix}\right).
\end{align*}
By calculating $\bx_n^\d, \bz_n^\d$ and noting $\bx_n^\d = \xi_n^\d$, we therefore obtain the following iteration scheme
\begin{align}\label{smd:TV}
\begin{split}
\bz_n^\d & = \mbox{sign}(\eta_n^\d) \max\{|\eta_n^\d|-\beta, 0\}, \\
\bx_{n+1}^\d & = \bx_n^\d - t_n^\d \left(\bA_{i_n}^T(\bA_{i_n} \bx_n^\d - y_{i_n}^\d) + \bD^T (\bD \bx_n^\d - \bz_n^\d)\right), \\
\eta_{n+1}^\d & = \eta_n^\d - t_n^\d (\bz_n^\d - \bD \bx_n^\d) 
\end{split}
\end{align}
with the initial guess $\eta_0^\d={\bf 0}$ and $\bx_0^\d ={\bf 0}$, where $i_n\in \{1, \cdots, p\}$ is a randomly selected index via the uniform distribution and $t_n^\d\ge 0$ denotes the step-size.

\begin{figure}[htpb]
\centering
\includegraphics[width = 0.32\textwidth]{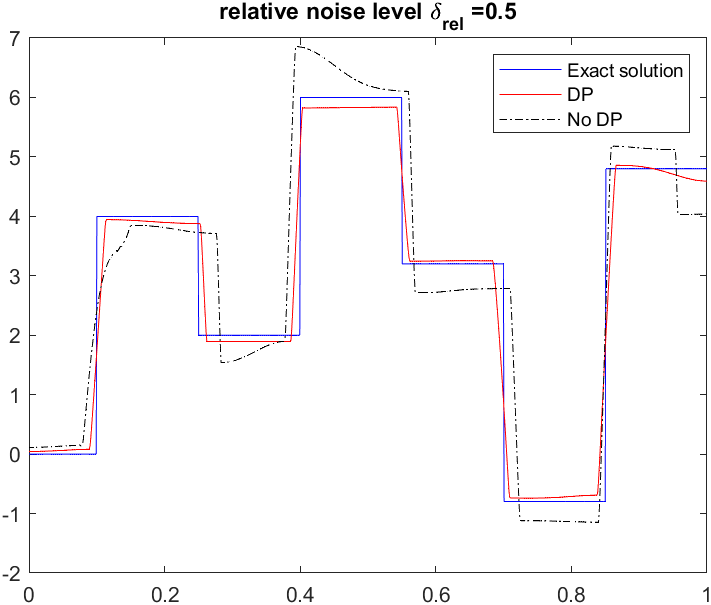}
\includegraphics[width = 0.32\textwidth]{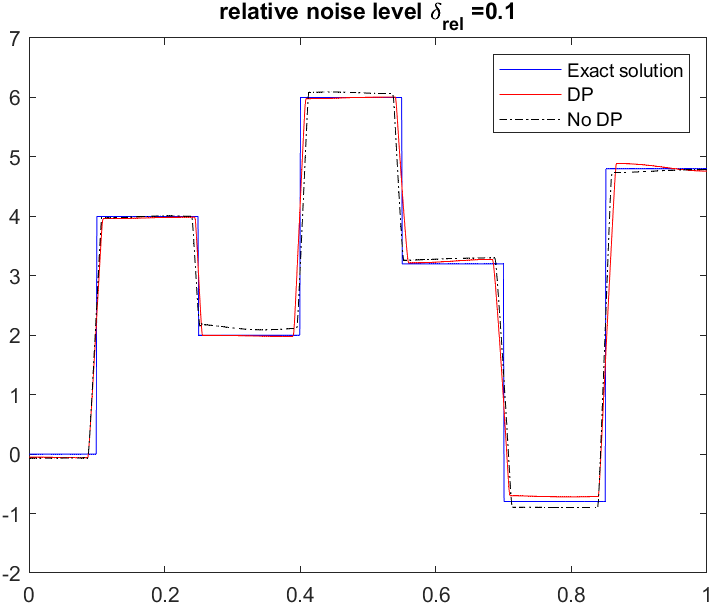}
\includegraphics[width = 0.32\textwidth]{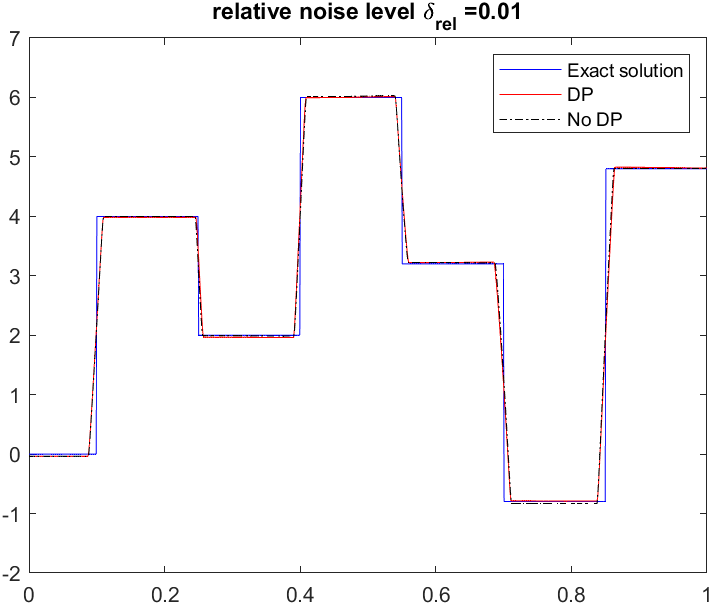}
%
\caption{Reconstruction results of a piecewise constant solution by (\ref{smd:TV}) using  step-sizes chosen by (\ref{TV.s2}) and (\ref{TV.s3}) with $\mu_0=1$ and $\tau =1$ after $5\times 10^5$ iterations. }\label{fig:TV1}
\end{figure}

Assuming the sought solution $\bx^\dag$ is piecewise constant whose graph is plotted in Figure \ref{fig:TV1}. By adding noise to the exact data $y_i:= \bA_i \bx^\dag$, we produce the noisy data $y_i^\d$. We then use these noisy data in the method (\ref{smd:TV}) to reconstruct $\bx^\dag$. In the numerical computation we use $p=1000$ and $\beta = 400$. For the method (\ref{smd:TV}) using step-size chosen by (s2), i.e. 
\begin{align}\label{TV.s2}
t_n^\d = \frac{\mu_0(|\bA_{i_n} \bx_n^\d - y_{i_n}^\d|^2 + \| \bD \bx_n^\d - \bz_n^\d\|_2^2)}{\|\bA_{i_n}^T(\bA_{i_n} \bx_n^\d - y_{i_n}^\d)\|_2^2 + \|\bD^T (\bD \bx_n^\d - \bz_n^\d)\|_2^2}
\end{align}
we have performed the numerical computation for several distinct values of $\mu_0$ and observed semi-convergence property, oscillations of iterates and the effect of the magnitude of the step-sizes. Since these observations are very similar to the previous examples, we will not report them here. 
Instead we will focus on the computational effect of the step-size chosen by (s3). As before, the noisy data are generated by (\ref{nd}) with $\ep_i$ being uniform noise on $[-1,1]$ and we assume the noise levels $\d_i:= \d_{rel} |y_i|$ are known. The step-size chosen by (s3) then takes the formula
\begin{align}\label{TV.s3}
t_n^\d = \left\{\begin{array}{lll}
\frac{\mu_0(|\bA_{i_n} \bx_n^\d - y_{i_n}^\d|^2 + \| \bD \bx_n^\d - \bz_n^\d\|_2^2)}{\|\bA_{i_n}^T(\bA_{i_n} \bx_n^\d - y_{i_n}^\d)\|_2^2 + \|\bD^T (\bD \bx_n^\d - \bz_n^\d)\|_2^2} & \mbox{ if } |\bA_{i_n} \bx_n^\d- y_{i_n}^\d|> \tau \d_{i_n}, \\
0 & \mbox{ otherwise}. 
\end{array}\right.
\end{align}
In Figure \ref{fig:TV1} we plot the reconstruction results by the method (\ref{smd:TV}) after $5\times 10^5$ iterations using noise data for three distinct relative noise levels $\d_{rel} = 0.5, 0.1$ and $0.01$, where ``\texttt{DP}" and ``\texttt{No DP}" represent the reconstruction results by the step-size chosen by (\ref{TV.s3}) and (\ref{TV.s2}) respectively; we use $\mu_0 = 1.0$ and $\tau = 1.0$.
These results demonstrate that the method (\ref{smd:TV}) can capture the piecewise constant feature very well. In order to give further comparison on the reconstruction results by the step-sizes (\ref{TV.s2}) and (\ref{TV.s3}), we present in Figure \ref{fig:TV2} the reconstruction errors, the first row plots approximations of the relative mean square errors $\EE[\|\bx_n^\d-\bx^\dag\|_2^2/\|\bx^\dag\|_2^2]$ by an average of 100 independent runs and the second row plots the reconstruction errors $\|\bx_n^\d-\bx^\dag\|_2^2/\|\bx^\dag\|_2^2$ for a typical individual run. The results show clearly that using the step-size (\ref{TV.s3}) can significantly suppress the oscillations in the iterates and reduce the effect of semi-convergence of the method.
\end{example}

\begin{figure}[htpb]
\centering
\includegraphics[width = 0.32\textwidth]{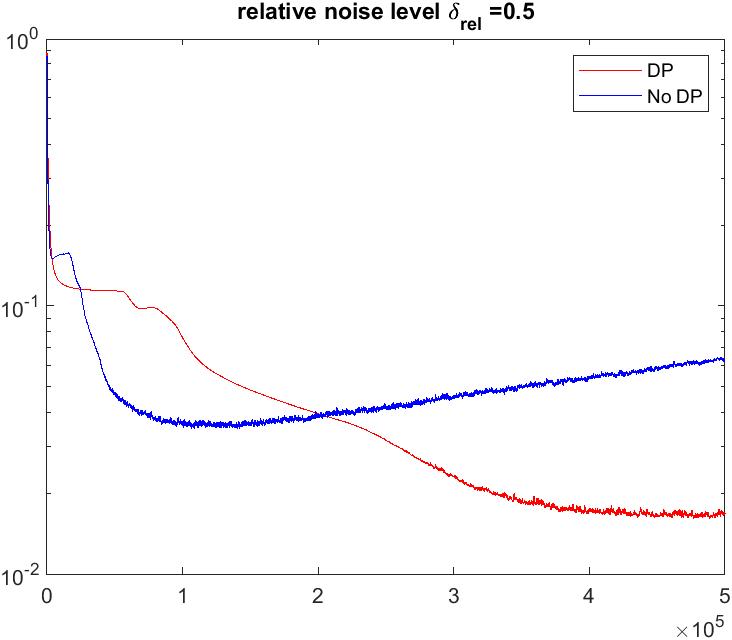}
\includegraphics[width = 0.32\textwidth]{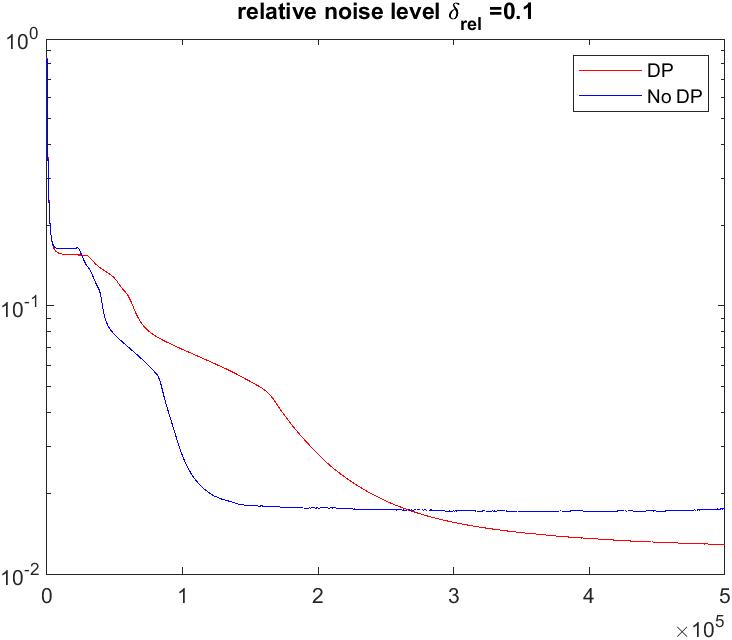}
\includegraphics[width = 0.32\textwidth]{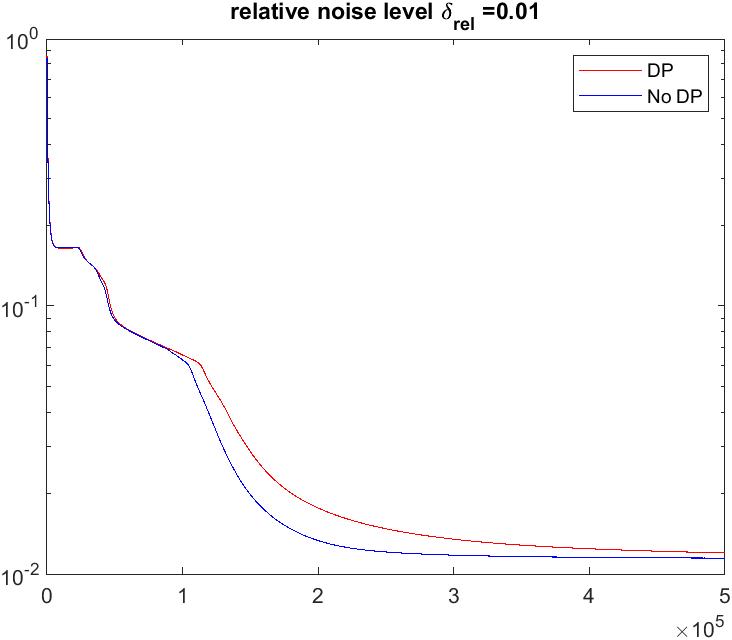}
\includegraphics[width = 0.32\textwidth]{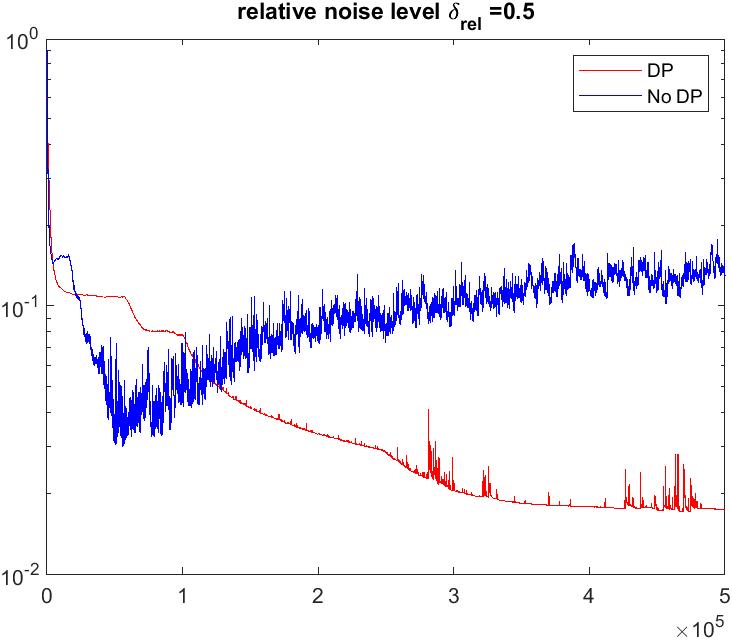}
\includegraphics[width = 0.32\textwidth]{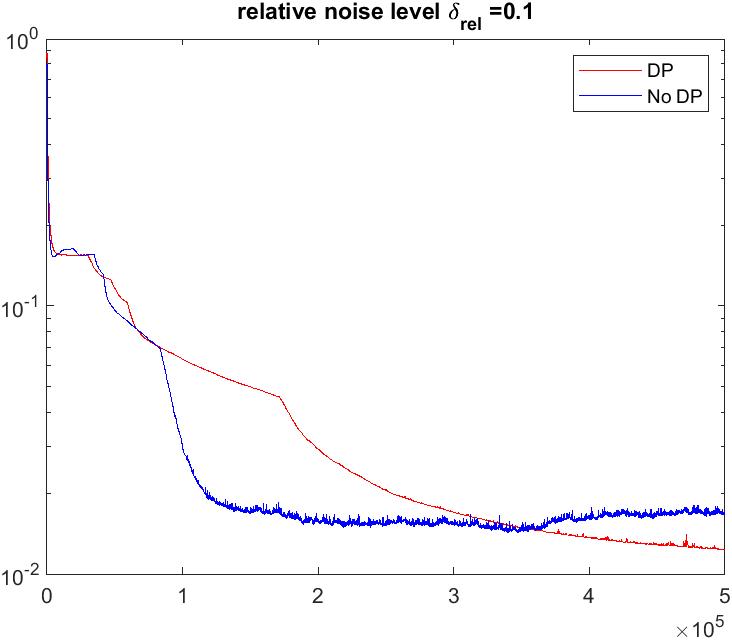}
\includegraphics[width = 0.32\textwidth]{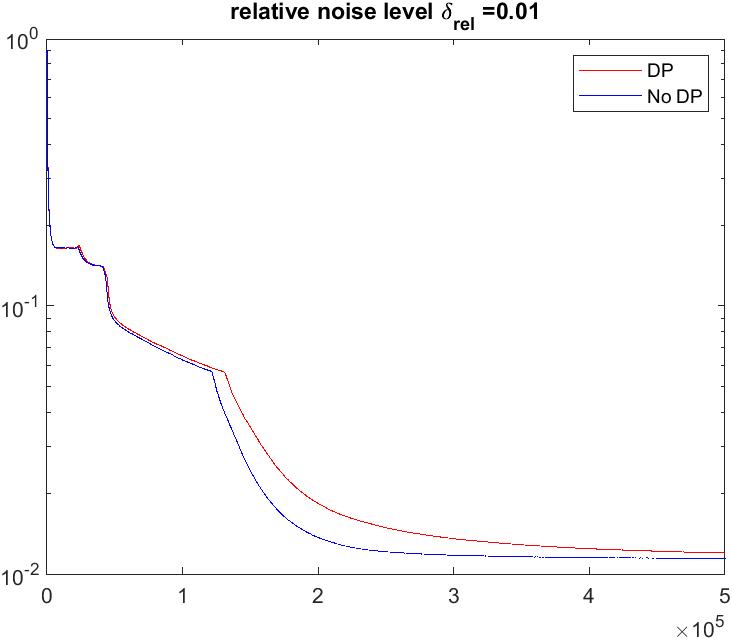}
\caption{Comparison of reconstruction results for a piecewise constant solution by (\ref{smd:TV}) using step-sizes (\ref{TV.s2}) and (\ref{TV.s3}). }\label{fig:TV2}
\end{figure}

\section*{\bf Acknowledgement} 

Q. Jin would like to thank Peter Math\'{e} from Weierstrass Institute for discussions on stochastic gradient descent. The work of Q. Jin is partially supported by the Future Fellowship of the Australian Research Council (FT170100231). The work of X. Lu is partially supported by the National Key Research and
Development Program of China (No. 2020YFA0714200) and the National Science
Foundation of China (No. 11871385).


\begin{thebibliography}{999}

\bibitem{BT2003} A. Beck and M. Teboulle, {\it Mirror descent and nonlinear projected subgradient methods for convex optimization},  Oper. Res. Lett., 31 (2003), no. 3, 167--175. 

\bibitem{BW2016} R. Bhattacharya and E. C. Waymire, {\it A basic course in probability theory},
Second edition. Universitext. Springer, Cham, 2016.

\bibitem{BL1991} J. M. Borwein and C. S. Lewis, {\it Convergence of best entropy estimates}, SIAM J. Optim., 1 (1991), pp. 191--205.

\bibitem{BH2012} R. Bo\c{t} and T. Hein, {\it Iterative regularization with a geeral penalty term -- theory and applications to $L^1$ and TV regularization}, Inverse Problems, 28 (2012), 104010.

\bibitem{BCN2018} L. Bottou, F. E. Curtis and J. Nocedal, {\it Optimization methods for large-scale machine learning}, SIAM Rev., 60 (2018), no. 2, 223--311. 

\bibitem{B2015} S. Bubeck, {\it Convex Optimization: Algorithms and Complexity}, Foundations and Trends in Machine Learning: 8 (2015), no. 3-4, pp 231--357. 

\bibitem{BO2004}  M. Burger and S. Osher, {\it Convergence rates of convex variational regularization}, Inverse Problems 20 (2004), no. 5, 1411--1421.

\bibitem{D2018} J. C. Duchi, {\it Introductory lectures on stochastic convex optimization}, In The Mathematics of Data, IAS/Park City Mathematics Series. American Mathematical Society, 2018.

\bibitem{E1993} P. P. B. Eggermont, {\it Maximum entropy regularization for Fredholm integral equations of the first kind}, SIAM J. Math. Anal., 24 (1993), no. 6, 1557--1576.

\bibitem{EHN1996} H. W. Engl, M. Hanke and A. Neubauer, {\it Regularization of Inverse Problems}, Kluwer, Dordrecht, 1996.

\bibitem{EL1993} H. W. Engl and G. Landl, {\it Convergence rates for maximum entropy regularization}, SIAM J. Numer. Anal., 30 (1993), 1509--1536.

\bibitem{FS2010} K. Frick and O. Scherzer,{\it  Regularization of ill-posed linear equations by the non-stationary augmented Lagrangian method}, J. Integral Equ. Appl., 22 (2010), no. 2, 217--257.

\bibitem{GBC2017} I. Goodfellow, Y. Bengio and A. Courville, {\it Deep Learning}, Adaptive Computation and Machine Learning series, Cambridge, United States, 2017.

\bibitem{G1984} C. W. Groetsch, {\it The theory of Tikhonov regularization for Fredholm equations of the first kind}, Research Notes in Mathematics, 105. Pitman (Advanced Publishing Program), Boston, MA, 1984.

\bibitem{HLS2007} M. Haltmaier, A. Leitao and O. Scherzer, {\it Regularization of systems of nonlinear ill-posed equations: I. Convergence Analysis}, Inverse Problems and Imaging, 1 (2007), no. 2, 289--298.

\bibitem{HS2012} P. C. Hansen and M. Saxild-Hansen, {\it AIR Tools--a MATLAB package of algebraic iterative reconstruction methods}, J. Comput. Appl. Math. 236 (2012), 2167--2178.

\bibitem{JJL2017} Y. Jiao, B. Jin and X. Lu, {\it Preasymptotic convergence of randomized Kaczmarz method}, Inverse Problems, 33 (2017), 125012, 21 pp.

\bibitem{JL2019}
B. Jin and X. Lu, {\it On the regularizing property of stochastic gradient descent}, Inverse Problems, 35 (2019),
no. 1, 015004, 27 pp.

\bibitem{JZZ2020} B. Jin, Z. Zhou and J. Zou, {\it On the convergence of stochastic gradient descent for nonlinear
ill-posed problems}, SIAM J. Optim., 30 (2020), no. 2, 1421--1450.

\bibitem{Jin2016}
Q. Jin, {\it Landweber-Kaczmarz method in Banach spaces with inexact inner solvers}, Inverse Problems, 32 (2016),
no. 10, 104005, 26 pp.

\bibitem{Jin2021} Q. Jin, {Convergence rates of a dual gradient method for linear ill-posed problems}, Numer. Math., accepted for publication, 2022.

\bibitem{JL2014} Q. Jin and X. Lu, {\it A fast nonstationary iterative method with convex penalty for inverse problems in Hilbert spaces}, Inverse Problems, 30 (2014), 04501, 21 pp.

\bibitem{JW2013}
Q. Jin and W. Wang, {\it Landweber iteration of Kaczmarz type with general non-smooth convex penalty functionals},
Inverse Problems, 29 (2013), no. 8, 085011, 22 pp.

\bibitem{KNS2008} B. Kaltenbacher, A. Neubauer and O. Scherzer, {\it Iterative Regularization Methods for Nonlinear Ill-Posed Problems}, Berlin: de Gruyter, 2008.


\bibitem{LM2021} S. Lu and P. Math\'{e}, {\it Stochastic gradient descent for linear inverse problems in Hilbert spaces},
Math. Comput., 91 (2022), 1763--1788.

\bibitem{LX2015}
Z. Lu and L. Xiao, {\it On the complexity analysis of randomized block-coordinate descent methods}, Math. Program.,
152 (2015), no. 1--2, Ser. A, 615--642.

\bibitem{N2001} F. Natterer,{\it  The Mathematics of Computerized Tomography}, SIAM, Philadelphia, 2001.

\bibitem{NL2014} A. Nedic and S. Lee, {\it On stochastic subgradient mirror-descent algorithm with weighted averaging}, SIAM J. Optim., 24 (2014), pp. 84--107.

\bibitem{NJLS2009} A. Nemirovski, A. Juditsky, G. Lan and A. Shapiro, {\it Robust stochastic approximation approach to stochastic programming}, SIAM J. Optim., 19 (2009), 1574--1609.

\bibitem{NY1983} S. Nemirovski and D. B. Yudin, {\it Problem Complexity and Method Efficiency in Optimization}, Wiley, New York, 1983.

\bibitem{N2007} Y. Nesterov, {\it Primal-dual subgradient methods for convex problems}, Math. Program., Ser. B 120 (2009), 221--259.

\bibitem{N2012}
Y. Nesterov, {\it Efficiency of coordinate descent methods on huge-scale optimization problems}, SIAM J. Optim., 22 (2012),
no. 2, 341--362.

\bibitem{RSL2022}
J. C. Rabelo, Y. F. Saporito and A. Leit\~{a}o, {\it On stochastic Kaczmarz type methods for solving
large scale systems of ill-posed equations}, Inverse Problems, 38 (2022), 025003. 

\bibitem{R1970} R. T. Rockafellar, {\it Convex Analysis}, Princeton University Press, Princeton, NJ, 1970.

\bibitem{SKHK2012} T. Schuster, B. Kaltenbacher, B. Hofmann and K. S. Kazimierski,{\it  Regularization Methods
in Banach Spaces}, Radon Series on Computational and Applied Mathematics 10 Walter de
Gruyter, Berlin 2012.


\bibitem{RT2014}
P. Richt\'{a}rik and M. Tak\'{a}\u{c}, {\it Iteration complexity of randomized block-coordinate descent methods for
minimizing a composite function}, Math. Program., 144 (2014), no. 1--2, 1--38.

\bibitem{SL2019} F. Sch\"{o}pfer and D. A. Lorenz, {\it Linear convergence of the randomized sparse Kaczmarz method}, Math. Program., 173 (2019), no. 1-2, Ser. A, 509--536.

\bibitem{Z2002} C. Z\u{a}linscu, {\it Convex Analysis in General Vector Spaces}, World Scientific Publishing Co.,
Inc., River Edge, New Jersey, 2002.

\bibitem{ZMBBG2020} Z. Zhou, P. Mertikopoulos, N. Bambos, S. P. Boyd and P. W. Glynn, {\it On the convergence of mirror descent beyond stochastic convex programming}, SIAM J. Optim., 30 (2020), no. 1, 687--716.

\end{thebibliography}
\end{document}